\newcommand{\bR}{\mathbb{R}}
\newcommand{\bN}{\mathbb{N}}
\newcommand{\vep}{\varepsilon}
\newcommand{\res}{\mathbf{res}}
\newcommand{\spec}{\mathbf{spec}}
\newcommand{\scl}{\mathrm{sc}}
\newcommand{\ff}{\mathrm{ff}}
\newcommand{\calA}{\mathcal A}
\newcommand{\calR}{\mathcal R}
\newcommand{\calV}{\mathcal V}
\newcommand{\phg}{\mathrm{phg}}
\newcommand{\unif}{\mathrm{unif}}
\newcommand{\olH}{\overline{H}}
\newcommand{\gtil}{\widetilde{g}}
\newcommand{\gbar}{\overline{g}}
\newcommand{\mx}{X}
\renewcommand{\Re}{\mathrm{Re} \;}
\newcommand{\calC}{\mathcal C}
\newcommand{\calL}{\mathcal L}
\newcommand{\calO}{\mathcal O}
\newcommand{\del}{\partial}
\newcommand{\scholder}{\calC_\vep}
\newcommand{\RR}{\mathbb R}
\newcommand{\CC}{\mathbb C}
\newcommand{\frakw}{\mathfrak w}
\newtheorem{theorem}{Theorem}[section]
\newtheorem{prop}[theorem]{Proposition}
\newtheorem{proposition}[theorem]{Proposition}
\newtheorem{lemma}[theorem]{Lemma}
\newtheorem{remark}[theorem]{Remark}
\newtheorem{corollary}[theorem]{Corollary}
\newtheorem{definition}[theorem]{Definition}
\newtheorem{maintheorem}{Theorem}
\numberwithin{equation}{section}
\begin{document}
\title[Wellposedness of nonlinear flows]{Wellposedness of nonlinear flows \\ 
on manifolds of bounded geometry} 
\begin{abstract}
We present simple conditions which ensure that a strongly elliptic operator $L$ generates an analytic semigroup on 
H\"older spaces on an arbitrary complete manifold of bounded geometry. This is done by establishing the equivalent property 
that $L$ is ``sectorial’’, a condition that specifies the decay of the resolvent $(\lambda  I  - L)^{-1}$ as $\lambda$ diverges from 
the H\"older spectrum of $L$.  As one step, we prove existence of this resolvent if $\lambda$ is sufficiently large, and on this general class 
of manifolds, use a geometric microlocal version of the semiclassical pseudodifferential calculus. The properties of $L$ and $e^{-tL}$ 
we obtain can then be used to prove wellposedness of a wide class of nonlinear flows. We illustrate this by proving wellposedness on 
H\"older spaces of the flow associated to the ambient obstruction tensor on complete manifolds of bounded geometry.
\end{abstract}

\author[Bahuaud]{Eric Bahuaud}
\address{Department of Mathematics,
	Seattle University,
	Seattle, WA, 98122, USA}
\email{bahuaude (at) seattleu.edu}
\author[Guenther]{Christine Guenther}
\address{Department of Mathematics,
	Pacific University,
	Forest Grove, OR 97116, USA}
\email{guenther (at) pacific.edu}
\author[Isenberg]{James Isenberg}
\address{Department of Mathematics,
	University of Oregon,
	Eugene, OR 97403-1222 USA}
\email{isenberg (at) uoregon.edu}
\author[Mazzeo]{Rafe Mazzeo}
\address{Department of Mathematics,
Stanford University,
Stanford, CA, 94305}
\email{rmazzeo (at) stanford.edu}

\date{\today}
\subjclass[2010]{58J35; 35J05, 35K, 35P}
\keywords{Bounded geometry; Analytic semigroups; Sectorial operators; Semiclassical resolvent}
\maketitle

\section{Introduction}

Methods from semigroup theory provide an elegant abstract method to establish wellposedness, stability and convergence results for large 
classes of evolutionary 
partial differential equations, such as those which govern geometric heat flows.  For such purposes, the most useful semigroups are those which admit a holomorphic continuation in the `time' variable.  There is a nice
characterization of these {\it analytic semigroups} in terms of their infinitesimal generators. More specifically, write the semigroup as $e^{-tA}$,
where $A$ is a closed operator acting on a certain Banach space $X$. A classical theorem states that the function $t \mapsto e^{-tA}$
from $\RR^+$ to the space $\mathcal L(X)$ of bounded operators on $X$ admits a holomorphic extension to a neighbourhood of $\RR^+$ in $\CC$ 
if and only if the operator $A$ satisfies a property known as {\it sectoriality}, which as indicated below, involves restrictions on the spectrum of A, along with an estimate on its resolvent $(\lambda I - A)^{-1}$. We
refer to \cite[Chapters 5 and 6]{Angenent} for an elementary introduction to sectoriality and analytic semigroups.

In this work, we consider a broad class of geometric operators defined on complete Riemannian manifolds with bounded geometry and show 
that they are indeed sectorial. This is the content of our main result -- Theorem \ref{thm:main-A} -- which is proven in Section \ref{sec:proofThm} 
using tools from microlocal analysis. However, the main consequence of our work is that if the operators corresponding to geometric heat flows 
on manifolds of bounded geometry act on the naturally associated (little) H\"older spaces, and if their symbols satisfy readily verified algebraic 
conditions, then the initial value problems for those flows are well posed, and have good stability and convergence properties. 

More precisely, in applications to PDE, the generator $A$ is a (typically elliptic) differential operator and the sectoriality of such operators is 
known in a variety of settings. Our first goal in this paper is to prove the sectoriality estimate for strongly elliptic differential operators which 
satisfy a certain uniformity property, acting between sections of vector bundles over a complete Riemannian manifold of bounded geometry. We characterize such operators as \emph{admissible}.
The Banach spaces on which we let these operators act are little H\"older spaces; our emphasis on these is because of their use in 
applications to nonlinear problems. The immediate examples of such operators are generalized Laplacians, i.e., operators of the form 
$\nabla^* \nabla + \mathcal R$, where $\mathcal R$ is an endomorphism usually constructed from the curvature tensor of the underlying metric 
and its covariant derivatives. However, the method of proof extends naturally to allow us to prove this estimate for more general higher order 
operators as well.

Our second goal is to apply this sectoriality to deduce stability estimates for nonlinear parabolic evolution equations on these manifolds, as
before, acting on little H\"older spaces. 
We are particularly interested in geometric curvature flows, e.g., the Ricci flow, the mean curvature flow, and certain relatively unexplored higher
order flows such as the one associated to the ambient obstruction tensor (see \S \ref{sec:applicats} for a description of this). As discussed below, these flows typically 
require some sort of gauge fixing in order to become suitably parabolic. The applications  in this paper illustrate how one can easily establish wellposedness of quite general flows, on spaces that are not necessarily compact, using this sectoriality property. In a subsequent paper we describe an application in which sectoriality is a key part of the proof of a long-time stability and convergence result.

Sectoriality for admissible operators on manifolds on spaces of uniformly bounded geometry has, in fact, been treated previously,
notably by H. Amann and his collaborators, see for example \cite{Amann2}, \cite{ES}.  Those techniques are spread out over several papers, considerably more abstract 
and, from a geometric point of view, perhaps less accessible.  Our goal here is to provide a straightforward and
hopefully more approachable proof which should be more convenient for geometric applications. 

Let us briefly recall the functional analytic setting in more detail.  The fundamental idea when applying semigroup theory to differential equations is 
to recast the problem as an ordinary differential equation with values in some Banach space.  Let $X$ be a complex Banach space and $D$ a dense linear subspace.
Consider the $X$-valued autonomous ordinary differential equation
\begin{align} \label{eqn:ODEinBanach}
\frac{d u}{dt} = F( u(t ) ),
\end{align}
where $u: [0,T) \to X$ is a $\calC^1$ mapping.  Here $F: D \to X$ is a (nonlinear) Fr\'echet differentiable map satisfying certain structural assumptions.  
The most important of these is that the linearization $L$ of $F$ at $u_0 \in D$ is a sectorial operator on $X$. We now explain this hypothesis.

The resolvent set of a closed linear operator $L: X \to X$ is the subset $\res_X(L) \subset \CC$ consisting of all numbers $\lambda$ such that 
$(\lambda I - L): D \to X$ has an inverse which is a bounded operator on $X$.  $L$ is called {\it sectorial} on $X$ if it satisfies: 
\begin{enumerate} 
\item [\underline{S1}.] The resolvent set $\res_{X}(L)$ contains a sector of opening angle $2\theta < \pi$ which contains a
left half-plane $\mathrm{Re}\,\lambda \leq \omega$ for some $\omega \in \bR$, i.e., there exists $\theta \in (0,\pi/2)$ such that
\[ 
\res_{\mx}(L) \supset S_{\omega, \theta} := \{ \lambda \in \CC \setminus \{\omega\}: |\arg(\omega -\lambda) | > \theta\},  
\quad  \mathrm{and}
\]
	\item [\underline{S2.}] There exists a constant $C > 0$ so that for all $\lambda \in S_{\omega, \theta}$, we have
	\begin{equation*}
	\|(\lambda I - L)^{-1} \|_{\mathcal{L}(\mx)} \leq \frac{C}{|\lambda - \omega|}.	\end{equation*}
\end{enumerate}
These conditions on $L$ turn out to be equivalent to the analyticity of the semigroup $e^{-tL}$, and from this a wealth of  wellposedness and 
regularity results are then available.  Many nonlinear parabolic partial differential operators can be cast into this framework, see \cite{Lunardi} 
for a thorough account of this general theory.  Of particular importance is that spectral stability analysis of $L$ yields stability results for the 
nonlinear problem in some cases.

As for the geometric applications, recall that a complete Riemannian manifold $(M,g)$ is said to have bounded geometry (of a certain order) if 
its injectivity radius is bounded from below and there is a uniform bound for the norm of the curvature tensor and its covariant derivatives up 
to some order. This is equivalent to uniform control of the coefficients of the metric and its inverse in an atlas of normal coordinate balls of 
uniform radius. This class of spaces includes compact Riemannian manifolds, of course, but also complete noncompact manifolds which are 
asymptotically Euclidean, conic, cylindrical 
or hyperbolic, respectively, or more generally which are asymptotically modeled on other noncompact symmetric or homogeneous spaces. 

On any such space $(M,g)$ we consider elliptic differential operators, acting between sections of vector bundles, which satisfy uniformity conditions
on their coefficients in these local uniform coordinate charts.  The most obvious examples are operators determined directly from the metric $g$,
for example, generalized Laplace-type operators
\[
L = \nabla^* \nabla + \mathcal R,
\]
acting on sections of some tensor bundle over $M$. Here $\nabla$ is the induced covariant derivative on this bundle, $\nabla^*$ its adjoint,
and $\mathcal R$ a symmetric endomorphism built out of tensor products of contractions of the curvature tensor and its covariant derivatives.  
Many aspects of the mapping properties of $L$ on $L^2(M, dV_g)$ can be deduced from Hilbert space techniques.  However, it is often
more convenient for nonlinear geometric problems to consider this operator acting on weighted H\"older spaces instead.  Consider a general
weighted H\"older space $X = \mathfrak{w} \, \mathcal{C}^{k,\alpha}(M; E)$, where $\mathfrak{w}$ is a (strictly positive) weight function.  
The assumptions imposed on $\mathfrak{w}$ are specified in Definition~\ref{wthyp}.   More generally, we consider $L$ to be a strongly 
elliptic operator which satisfies certain uniformity conditions specified in Definition~\ref{admop}, acting on weighted H\"older sections of some 
vector bundle. Note that by replacing $L$ with $\mathfrak w^{-1} L \mathfrak w$
we may as well consider $L$ as acting simply on $\calC^{k,\alpha}(M;E)$.  The conditions on $\mathfrak w$ are precisely the ones
necessary for this conjugated operator to satisfy the same uniformity hypothesis. 

The first main result of this paper states that any uniform strongly elliptic operator as above is {\it sectorial}.
\begin{maintheorem} \label{thm:main-A}
Let $(M^n, g)$ be a complete Riemannian manifold of bounded geometry of order $\ell + \alpha' > m+k + \alpha$, where $m \in \mathbb N$,
$k \in \mathbb N_0 = \mathbb N \cup \{0\}$ and $0 < \alpha < \alpha' < 1$, and suppose that $L$ is an admissible elliptic operator of 
order $m$, i.e., it is strongly elliptic satisfying the uniformity hypotheses in Definition~\ref{admop2} below, which we let act on $X = 
\calC^{k,\alpha}(M;E)$ for some bundle $E$ over $M$. Then $L$ is sectorial on $X$.
\end{maintheorem}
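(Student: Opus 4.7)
The plan is to construct the resolvent $(\lambda I - L)^{-1}$ directly via a semiclassical pseudodifferential parametrix, obtaining conditions \underline{S1} and \underline{S2} simultaneously. After the reduction $L \mapsto \frakw^{-1} L \frakw$ already noted in the introduction, it suffices to invert $\lambda I - L$ on $X = \calC^{k,\alpha}(M;E)$ for $\lambda$ in a sector $S_{\omega,\theta}$ with $\omega$ chosen large. Strong ellipticity of the admissible operator $L$ means that the principal symbol $\sigma_m(L)(x,\xi)$ takes values in a cone $\{ |\arg z| \leq \pi/2 - \delta\}$ of the right half-plane, uniformly in $(x,\xi)$ with $|\xi|=1$. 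Choosing $\theta < \pi/2 - \delta$ and shifting so that $\omega$ dominates low frequencies, the full operator-valued symbol $\sigma_m(L)(x,\xi) - \lambda$ is pointwise invertible of size $(|\xi|^m + |\lambda - \omega|)^{-1}$ for all $\lambda \in S_{\omega,\theta}$.

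To promote this symbolic invertibility to a genuine bounded inverse with the correct $\lambda$-decay, I will introduce the semiclassical parameter $h = |\lambda - \omega|^{-1/m}$ and work in a uniform semiclassical pseudodifferential calculus on $(M,g)$, built by patching local semiclassical operators on normal coordinate balls using the uniformly locally finite atlas and uniform partition of unity provided by the bounded geometry structure, together with a geometric compactification of the semiclassical and fibre-infinity faces as advertised in the abstract. The admissibility assumptions in Definition~\ref{admop2} together with the $\calC^{\ell,\alpha'}$ bounded-geometry control on $g$ ensure that the full symbol of $L$ lies in this uniform calculus, with derivative seminorms bounded uniformly across $M$. In this calculus I iteratively construct a parametrix $P_h$ of semiclassical order $-m$ whose leading symbol is $(\sigma_m(L) - \lambda)^{-1}$, killing the remainder symbol at successively higher orders in $h$, so that $(\lambda I - L) P_h = I + R_h$ with $R_h$ in the residual class. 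A parallel left-parametrix construction (or a formal transpose argument) yields a two-sided inverse, so $\lambda I - L$ is genuinely invertible for $|\lambda - \omega|$ large, with $(\lambda I - L)^{-1} = P_h (I + R_h)^{-1}$ once $R_h$ is inverted by Neumann series.

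The sectoriality bound $\|(\lambda I - L)^{-1}\|_{\calL(X)} \leq C/|\lambda - \omega|$ then follows by tracking the $h$-scaling: I must show that any operator in the uniform semiclassical class of order $-m$ acts on $\calC^{k,\alpha}(M;E)$ with norm $O(h^m) = O(|\lambda - \omega|^{-1})$, while residual operators decay to any order $O(h^N)$. \textbf{This last step is the main obstacle I anticipate.} Hölder bounds for pseudodifferential operators are already more delicate than their $L^2$ counterparts, since no Plancherel argument is available; the semiclassical variant compounds the difficulty by forcing one to control the $h$-dependence simultaneously in the on-diagonal singularity of the kernel, its off-diagonal tail, and the patching across charts. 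The hypothesis that the bounded geometry order $\ell + \alpha'$ strictly exceeds $m + k + \alpha$ is exactly the regularity margin needed to push the local oscillatory-integral Hölder estimates through the bounded-geometry atlas and sum them via Schur-type bounds uniformly in $x \in M$, and this is where the geometric microlocal framework really earns its keep. Extending the resolvent estimate to the full sector $S_{\omega,\theta}$ requires only enlarging $\omega$ enough to absorb the finite-parameter range on which the large-$|\lambda|$ parametrix construction does not yet apply, which is harmless.
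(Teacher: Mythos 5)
Your reduction to the semiclassical operator and the use of a uniform semiclassical parametrix on the bounded-geometry atlas matches the first half of the paper's argument (Proposition \ref{scparprop} and Section \ref{scpc}), but the step you yourself flag as ``the main obstacle'' is precisely where your route diverges from the paper, and as written it is a genuine gap, not a technicality. Your plan rests on the claim that every operator in the uniform semiclassical class of order $-m$ maps the \emph{standard} space $\calC^{k,\alpha}(M;E)$ to itself with norm $O(h^m)$. No such boundedness theorem is available off the shelf: the natural mapping property of the calculus (Proposition \ref{bddprop} in the paper) is formulated on the \emph{semiclassical} H\"older spaces $\calC^{k,\alpha}_\vep$, where it is essentially tautological, and converting between $\|\cdot\|_{k,\alpha,\vep}$ and $\|\cdot\|_{k,\alpha}$ costs a factor $h^{-k-\alpha}$, which destroys the sectoriality estimate. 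To get the bound on standard H\"older spaces directly from the kernel you would need quantitative pointwise estimates plus cancellation (moment-type) information for the parametrix, its error terms, and the Neumann-series correction --- in effect redoing the Stewart/Amann potential-theoretic estimates uniformly over a bounded-geometry atlas. That can probably be done, but it is the whole content of the theorem and your proposal only asserts it; moreover the claimed role of the hypothesis $\ell+\alpha'>m+k+\alpha$ is misattributed (in the paper that margin is used for Arzel\`a--Ascoli when passing to pointed limits, not for oscillatory-integral estimates). A second, related gap: your iterative parametrix ``killing the remainder at successively higher orders in $h$'' requires smooth coefficients; with only $\calC^{\ell,\alpha'}$ bounded-geometry data the expansion terminates after finitely many steps and the remainder is merely bounded, not residual. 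The paper has to address exactly this point separately (Proposition \ref{finitereg}), again by a compactness argument rather than by symbol calculus.

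The paper avoids your hard step entirely: the parametrix is used only to show that $(\zeta I-\vep^m L)^{-1}$ \emph{exists} for each small fixed $\vep$, with no uniformity claimed; uniformity in $\vep$ and $\zeta$ on standard spaces is then obtained by contradiction, via rescaling/blow-up and pointed Gromov--Hausdorff limits of bounded-geometry manifolds (Lemma \ref{adm-persists}, Proposition \ref{limspecrel}), with the constant-coefficient limit ruled out by Fourier analysis and strong ellipticity (Proposition \ref{prop3.3}), followed by a $\calC^1$ estimate exploiting the semiclassical Schauder inequality (Propositions \ref{lemma:sc}, \ref{prop:second-resolvent-estimate}) and interpolation to reach $\calC^{0,\alpha}$, with $\calC^{k,\alpha}$ then following from local elliptic regularity (Corollary \ref{higherregsect}). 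If you want to pursue your direct-kernel route you must actually prove the uniform standard-H\"older bound with the $O(h^m)$ scaling (including for the finite-regularity case); otherwise you should replace that step by a compactness argument of the paper's type.
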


We briefly enumerate the main ideas in the proof.  We begin with the observation that the sectoriality estimate is equivalent to a uniform estimate for the 
associated {\it semiclassical operator} $\zeta I - \vep^m L$, where $\vep = |\lambda|^{-1/m}$ and $\zeta = \lambda/|\lambda|$.   
The first step is to show that this operator 
is actually invertible on $X$ when $\vep$ is sufficiently small. This is deduced by constructing an approximation for the inverse of this 
operator, which is called the \emph{semiclassical resolvent}. This involves a detour into the methods of geometric microlocal analysis, and the 
construction itself is sketched in some detail in Section \ref{scpc} in order to be as self-contained as possible. (This geometric microlocal analytic
construction has appeared implicitly in the literature before, but does not seem to appear explicitly in a readily available form elsewhere.)
Having established the existence of $(\zeta I - \vep^m L)^{-1}$ as a bounded operator on $X = \calC^{0,\alpha}(M)$ for each $\vep > 0$ 
sufficiently small, we need to establish uniformity of its norm.  This is argued by contradiction: we show using a number of rescaling and blowup
arguments that the failure of uniformity of this estimate would lead to various impossible conclusions.  A key feature of this argument
is that we parlay the (essentially tautological) uniform estimates of this operator acting on {\it semiclassical} H\"older spaces (see \S \ref{sec:background}) 
to uniformity for the action of this operator on standard H\"older spaces.  The passage from sectoriality on $\calC^{0,\alpha}$ to sectoriality
on $\calC^{k,\alpha}$ is a trivial extension. 

The key motivation for all this work is its application to proving wellposedness of geometric flows on complete noncompact manifolds. We obtain the following theorem: 
\begin{maintheorem} \label{thm:main-B} Let $(M,g)$ be a complete Riemannian manifold of bounded geometry of order $k+m+\alpha'$,  where $m \in \mathbb N$,
$k \in \mathbb N_0 = \mathbb N \cup \{0\}$ and $0 < \alpha < \alpha' < 1$, let $U$ be an open subset of $\calC^{m+k,\alpha}$, and let $F: U \to \calC^{k,\alpha}$ be a smooth elliptic operator of order $m$ such that the linearization $DF_u$ at any $u \in U$ is admissible.  Then for any $u_0 \in U$, there exists $T > 0$ so that the equation
\[ \frac{du}{dt} = F( u(t) ), \; u(0) = u_0 \]
has a unique solution $u: [0,T) \to \calC^{k,\alpha}$.  Moreover, any two solutions $v(t)$ and $w(t)$ with initial values $v_0$ and $w_0$ in $U$ satisfy
\[  \|v(t) - w(t)\|_{\calC^{m+k,\alpha}} \leq C \| v_0 - w_0\|_{\calC^{m+k,\alpha}}, \; \; \mbox{for all} \; t \in [0,T) \]
\end{maintheorem}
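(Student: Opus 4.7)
The plan is to recast the Cauchy problem as an abstract ODE on the Banach space $X := \calC^{k,\alpha}(M;E)$ and invoke the standard fixed point machinery for semilinear parabolic equations on little H\"older scales, in the spirit of Da Prato-Grisvard and Lunardi. Fix $u_0 \in U$ and set $L := DF_{u_0}$; by Theorem \ref{thm:main-A}, $L$ is sectorial on $X$ with domain $D(L) = \calC^{m+k,\alpha}(M;E)$. Consequently $-L$ generates an analytic semigroup $\{e^{-tL}\}_{t \geq 0}$ on $X$ which, because of the little H\"older framework, is strongly continuous and satisfies the standard smoothing estimate $\|L e^{-tL}\|_{\mathcal L(X)} \leq C/t$ as $t \to 0^+$.

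Next I would freeze the linearization at $u_0$ to reformulate \eqref{eqn:ODEinBanach} as a semilinear problem. Setting $R(u) := F(u) - F(u_0) - L(u - u_0)$ (which is smooth with $R(u_0) = 0$ and $DR_{u_0} = 0$) and $f_0 := F(u_0) - Lu_0 \in X$, the Cauchy problem is equivalent to the mild equation
\[
u(t) = e^{-tL} u_0 + \int_0^t e^{-(t-s)L}\bigl(f_0 + R(u(s))\bigr)\,ds =: \Phi(u)(t).
\]
The main step is to run a contraction mapping argument for $\Phi$ on the closed set
\[
\calX_{T,\delta} := \bigl\{ u \in C([0,T]; D(L)) : u(0) = u_0,\ \sup_{t \in [0,T]}\|u(t) - u_0\|_{D(L)} \leq \delta \bigr\},
\]
equipped with a suitable H\"older-in-time norm. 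The quadratic estimate $\|R(u) - R(v)\|_X \leq C\delta\,\|u - v\|_{D(L)}$, valid on this ball because $DR_{u_0}=0$ and $F$ is smooth, combined with the H\"older-type maximal regularity estimates for the convolution $g \mapsto \int_0^t e^{-(t-s)L}g(s)\,ds$ (which are available precisely because $X$ is a little H\"older space and $L$ is sectorial on it), then yields that $\Phi$ is a strict contraction on $\calX_{T,\delta}$ for $T$ and $\delta$ sufficiently small. The unique fixed point is the desired solution, lying in $C([0,T);\calC^{m+k,\alpha}) \cap C^1([0,T);\calC^{k,\alpha})$.

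The Lipschitz dependence on initial data follows by applying the same framework simultaneously to two initial conditions $v_0, w_0$ near $u_0$: subtracting the mild formulations of the resulting solutions $v, w$, using the same quadratic bound on $R$, and applying Gronwall in the H\"older-in-time norm on $D(L)$ gives the claimed estimate. The main obstacle, and the reason the argument is subtle rather than immediate, is the regularity gap: the semigroup smoothes $X$-valued data into $D(L)$ only at the singular rate $1/t$, so a naive contraction in $C([0,T]; D(L))$ with merely bounded right-hand side diverges. One must genuinely work in time-H\"older norms and exploit Da Prato-Grisvard maximal regularity; once Theorem \ref{thm:main-A} is in hand, however, this is routine abstract semigroup theory as developed in \cite{Lunardi}.
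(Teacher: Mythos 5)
Your proposal is correct and takes essentially the same route as the paper: there, Theorem \ref{thm:main-B} is deduced from Theorem \ref{thm:main-A} by observing that the little H\"older setting makes $D=\calC^{m+k,\alpha}$ dense in $X=\calC^{k,\alpha}$, that sectoriality is an open condition, and then citing the fully nonlinear parabolic theory of \cite{Lunardi} (Theorem 8.1.1, Corollary 8.1.2, and \S 8.3). What you sketch --- freezing $L=DF_{u_0}$, the mild formulation with the quadratically small remainder $R$, and the Da Prato--Grisvard maximal-regularity contraction in time-H\"older norms --- is exactly the argument behind those cited results, so the mathematical content coincides.
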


There are many possible applications. We illustrate this by focusing on a flow for a metric involving the ambient obstruction tensor, $\calO_n$, developed by Fefferman and Graham \cite{FG}.  As we review in \S \ref{sec:applicats}, this is a conformally invariant tensor that involves $n$ derivatives of the metric.  Due to the higher-order nature of the system of equations for this flow, the usual technique of using an exhaustion and maximum principles to prove existence are not easy to apply.  To our knowledge our method is the only wellposedness result to date for this flow.  Our main result is:

\begin{maintheorem} \label{thm:main-C}
Let $(M^n,g)$ be a complete Riemannian manifold of bounded geometry of order $2n + \alpha'$, with even dimension 
$n = 2\ell$, and where $0 < \alpha < \alpha' < 1$. If $g_0$ is any smooth metric on $M$, then there exists $T > 0$ and a family of unique metrics $g: [0,T) \to \calC^{n,\alpha}(M,g)$ solving the ambient obstruction flow 
\begin{align} 
\begin{cases} \partial_t g &= \mathcal{O}_n(g) + c_n (-1)^{\frac{n}{2}} ( (-\Delta)^{\frac{n}{2}- 1} S ) g \\
g(0) &= g_0,
\end{cases}
\end{align}
where $c_n = (2^{n/2 - 1} ( \frac{n}{2} - 2)! (n-2) (n-1))^{-1}$ and $S$ is the scalar curvature of $g$.
\end{maintheorem}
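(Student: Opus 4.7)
The plan is to deduce Theorem~\ref{thm:main-C} from Theorem~\ref{thm:main-B} via a higher-order DeTurck gauge fix. The right-hand side $F(g) := \mathcal O_n(g) + c_n(-1)^{n/2}\bigl((-\Delta)^{n/2-1} S\bigr)\, g$ is invariant under diffeomorphisms, so $DF_g$ annihilates every Lie derivative $\mathcal L_X g$ and its principal symbol has an infinite-dimensional kernel; $F$ alone therefore fails the ellipticity hypothesis of Theorem~\ref{thm:main-B}. However, the scalar-curvature term in $F$ was inserted precisely to render the symbol elliptic in the conformal (trace) direction, so only the divergence directions remain to be controlled, and a single Lie-derivative correction of the right order will suffice.

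Using the smooth background metric $g_0$ supplied in the hypothesis, I would introduce the difference-of-connections vector field $W(g)^i := g^{jk}\bigl(\Gamma^i_{jk}(g) - \Gamma^i_{jk}(g_0)\bigr)$, and, because the flow is of order $n$ rather than $2$, replace the usual first-order DeTurck Lie derivative by $\mathcal L_{\mathcal P W(g)} g$, where $\mathcal P$ is an elliptic differential operator of order $n-2$ constructed from $g_0$ (for instance a suitably signed power of $\Delta_{g_0}$). Setting $\tilde F(g) := F(g) + \mathcal L_{\mathcal P W(g)} g$, the main technical step -- and the principal obstacle of the proof -- is a symbol computation verifying that $\sigma(D\tilde F_g)(\xi)$ is a positive multiple of $|\xi|^n \, \mathrm{Id}$ on all of $S^2 T^*M$. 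This generalizes the classical DeTurck--Ricci symbol calculation, iterated through $n/2$ derivatives, and once it is established the admissibility conditions of Definition~\ref{admop2} follow from bounded geometry of order $2n+\alpha'$ together with the smoothness of $g_0$: the coefficients of $D\tilde F_g$ are universal polynomial combinations of $g$, $g^{-1}$, $g_0$, $g_0^{-1}$ and their covariant derivatives through order $n$, all uniformly controlled in a uniform normal coordinate atlas.

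Theorem~\ref{thm:main-B} then yields a unique short-time solution $\tilde g: [0,T) \to \calC^{n,\alpha}(M; S^2 T^*M)$ of $\partial_t \tilde g = \tilde F(\tilde g)$ with $\tilde g(0) = g_0$. To recover the obstruction flow itself, let $\varphi_t$ be the time-dependent family of diffeomorphisms of $M$ generated by $-\mathcal P W(\tilde g(t))$; since $\mathcal P W$ has order $n-1$ and $\tilde g(t) \in \calC^{n,\alpha}$, the generating vector field is $\calC^{1,\alpha}$ uniformly in $t$, and the uniform positive lower bound on the injectivity radius from bounded geometry guarantees global existence of $\varphi_t$ on $M$ on a (possibly shortened) uniform interval. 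Setting $g(t) := \varphi_t^* \tilde g(t)$, diffeomorphism invariance of $F$ together with the definition of $\tilde F$ show that $g$ solves the ambient obstruction flow with $g(0) = g_0$ in $\calC^{n,\alpha}$. Uniqueness follows by running the construction in reverse: given a competing $\calC^{n,\alpha}$ solution, one solves an $n$-th order parabolic harmonic-map-type system for the gauge diffeomorphism to transform it into a solution of $\partial_t \tilde g = \tilde F(\tilde g)$, and uniqueness from Theorem~\ref{thm:main-B} forces the two original solutions to agree. The noncompactness of $M$ enters only in this last ODE/harmonic-map integration step, where the bounded-geometry hypothesis supplies the uniform control needed on the noncompact base.
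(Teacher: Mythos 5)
Your overall architecture (gauge-fix, apply the abstract wellposedness theorem, integrate the gauge vector field and pull back, then prove uniqueness through a higher-order harmonic-map-type flow) is exactly the paper's, but the step you yourself identify as ``the main technical step'' fails for the gauge you propose, and this is a genuine gap. With $\tilde F(g)=F(g)+\mathcal L_{\mathcal P W(g)}g$ and $\mathcal P$ any operator of order $n-2$ built from $g_0$ acting on the vector field $W$, the principal symbol of $D\tilde F_g$ is \emph{never} a multiple of $|\xi|^n\,\mathrm{Id}$. At the symbol level the gauge term contributes $\xi_i\eta_j+\xi_j\eta_i$ with $\eta=A(\xi)\,\sigma(DW)(h)$, where $\sigma(DW)(h)_j=\xi^k h_{kj}-\tfrac12\xi_j\,\mathrm{tr}\,h$; this does cancel the $\Delta^{n/2-2}$-iterated Ricci cross terms (the usual DeTurck cancellation), and the conformal modification with the constant $c_n$ cancels the terms proportional to $g_{ij}$. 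But $\mathcal O_n$ also contains a $\nabla_i\nabla_j\,\Delta^{n/2-2}$ acting on (a multiple of) the scalar curvature, whose linearization produces a term proportional to $\xi_i\xi_j\,|\xi|^{n-4}\bigl(\xi^k\xi^l h_{kl}-|\xi|^2\,\mathrm{tr}\,h\bigr)$. No matrix $A(\xi)$ can produce this from $\sigma(DW)(h)$: testing on $h=v\otimes v$ with $v\perp\xi$ forces $A(\xi)\xi=-2|\xi|^2\xi$, while testing on $h=\xi\otimes\xi$ forces $A(\xi)\xi=0$. So this term survives any choice of $\mathcal P$, and your asserted symbol identity is false as stated. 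This is precisely why the DeTurck vector field used in the paper (following Bahuaud--Helliwell) has a \emph{second} summand, $\tfrac{c_n(n-2)(-1)^{n/2}}{2}(-\Delta)^{n/2-2}\nabla S$, in addition to the $(-\Delta)^{n/2-1}V$ term: its Lie derivative contributes exactly $\xi_i\xi_j$ times the scalar-curvature variation and kills the leftover term, after which the linearization has leading part $(-1)^{n/2-1}A_{g_0}$, manifestly strongly elliptic, and admissibility follows from bounded geometry.

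To repair your argument you must either adopt the two-term gauge vector field (in which case you recover the paper's proof, with the symbol computation imported from \cite{BahuaudHelliwell}), or else keep your gauge and prove admissibility for the resulting \emph{non-scalar} symbol $a|\xi|^n h_{ij}+b\,\xi_i\xi_j|\xi|^{n-4}(\xi^k\xi^l h_{kl}-|\xi|^2\mathrm{tr}\,h)$ directly; the perturbation is nilpotent, so the spectrum of the symbol is still $\{a|\xi|^n\}$, but the numerical-range form of strong ellipticity and the uniform resolvent bound of Definition~\ref{admop} would then have to be checked by hand, which you do not do. The remaining parts of your outline (solving $\dot\varphi_t=-U\circ\varphi_t$ using bounded geometry, pulling back, and the uniqueness argument via an $n$-th order harmonic-map-type flow as in \cite{BahuaudHelliwell2}) agree with the paper's proof and are fine modulo the gap above.
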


The remainder of this paper is structured as follows.  In \S \ref{sec:background} we describe the analytic and geometric background.  After discussing sectoriality, we define manifolds of bounded geometry and define the operators of interest.  We discuss pointed limits of manifolds of bounded geometry, and prove a relationship between the resolvent set of an operator and its limiting operators under this construction,   Proposition \ref{limspecrel} that may be of independent interest.  In \S \ref{sec:proofThm} we explain the reduction of sectoriality to semiclassical estimates and prove Theorem \ref{thm:main-A}.  This section presumes the existence of uniform bounds for the semiclassical resolvent of an admissible operator, and we give a detailed construction of this resolvent in \S \ref{scpc} using the techniques of geometric microlocal analysis.  Finally in \S \ref{sec:applicats} we apply our results to prove Theorem \ref{thm:main-B}, and conclude with the proof of the wellposedness result for the ambient obstruction flow, Theorem \ref{thm:main-C}.

\subsubsection*{Acknowledgments} The authors thank Jack Lee, Yoshihiko Matsumoto, Andr\'as Vasy, and Guofang Wei for useful conversations during this work.  
This work was supported by collaboration grants from the Simons Foundation (\#426628, E. Bahuaud and \#283083, C. Guenther).  
J. Isenberg was supported by NSF grant PHY-1707427. 

\section{Background}
\label{sec:background}

\subsection{Sectoriality} \label{sec:analytic-bkgd}

We begin by defining what it means for a closed unbounded operator acting on a Banach space to be sectorial. The abstract notion
of sectoriality, and its precise relationship with the theory of analytic semigroups is classical and can be found, for example, in \cite{Yosida}[Chapter IX].
The monographs \cite{Amann1, Lunardi} contain applications of sectoriality to the study of evolution equations, and the papers \cite{BGI, GIK} focus on
its specific application to Ricci flow. 

Let $X$ be a complex Banach space and $\mathcal{L}(X)$ the space of bounded linear operators on $X$; we denote the operator norm by
$\| \cdot \|_{\mathcal{L}(X)}$.  Suppose that $L$ is a closed {\it unbounded} linear operator on $X$ which has dense domain 
$D \hookrightarrow X$. The \emph{resolvent set} of $L$, $\res_{X}(L)$,  is the set of $\lambda \in \CC$ for which the resolvent operator
\[
R_L(\lambda) := (\lambda I - L)^{-1}
\]
lies in $\mathcal{L}(X)$.  The range of $R_L(\lambda)$ is the 
domain $D$. The \emph{spectrum} of $L$, denoted $\spec_X(L)$, is the complement 
$\CC \setminus \res_{X}(L)$. 

\begin{definition}
\label{sectorialdef}
A closed unbounded operator $L:  X \to X$ with domain $D$ is sectorial in $X$ if: 
\begin{enumerate} \label{def-sec}
\item [\underline{S1}.] The resolvent set $\res_{X}(L)$ contains a sector of opening angle $2\theta < \pi$ which contains a
left half-plane $\mathrm{Re}\,\lambda \leq \omega$ for some $\omega \in \bR$, i.e., there exists $\theta \in (0,\pi/2)$ such that
\[ 
\res_{\mx}(L) \supset S_{\omega, \theta} := \{ \lambda \in \CC \setminus \{\omega\}: |\arg(\omega -\lambda) | > \theta\},  
\quad  \mathrm{and}
\]
\item [\underline{S2}.] There exists a constant $C > 0$ so that 
\begin{equation} 
\label{ResEst}
\| R_L(\lambda) \|_{\mathcal{L}(\mx)} \leq \frac{C}{|\lambda - \omega|}\ \ \mbox{for all}\ \ \lambda \in S_{\omega,\theta}.
\end{equation}
\end{enumerate}
We often simply say that $L$ is sectorial if the space $X$ is understood. 
\end{definition}
\begin{remark}
We adopt the convention that the spectrum of $L$ lies in a sector with an acute opening angle and is strictly
contained in a {\it right} half-plane $\mathrm{Re}\, \lambda \geq \omega$.  In the applications below, 
$L$ is a differential operator with the leading part equal to some power of an iterated Laplacian $\Delta^k$, and our convention then 
agrees with the one where the $L^2$ spectrum of $\Delta$ lies in the positive half-line.  Note that our convention is different from our earlier work \cite{BGI} and the monograph \cite{Lunardi}.
\end{remark}

Sectoriality is equivalent to an apparently weaker condition: 
\begin{lemma}[Proposition 2.1.11 of \cite{Lunardi}]\label{lem:halfplane} Let $X$ be a complex Banach space, and 
$L: X \to X$ a closed linear operator with dense domain $D$ such that $\res_X(L)$ contains a closed half-plane  $\{\lambda \in \CC: 
\Re \lambda \leq \omega\}$, for some $\omega \in \RR$. If there exists a constant $C > 0$ such that 
\begin{equation} 
\| \lambda (\lambda - L)^{-1} \|_{\mathcal{L}(\mx)} \leq C, 
\label{sectest}
\end{equation}
for all $\lambda$ in this half-plane, then $L$ is sectorial.
\end{lemma}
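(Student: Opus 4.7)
The plan is to analytically continue the resolvent from the closed half-plane $\{\Re\lambda \leq \omega\}$ into a strictly larger sector by means of the Neumann series, and to read off the decay estimate \eqref{ResEst} from that series. For any $\lambda_0 \in \res_X(L)$ one has the Taylor expansion
\[
R_L(\mu) \;=\; \sum_{n=0}^{\infty} (\lambda_0 - \mu)^{n}\, R_L(\lambda_0)^{n+1},
\]
which converges in $\mathcal L(X)$ whenever $|\mu - \lambda_0|\,\|R_L(\lambda_0)\| < 1$ and gives $\|R_L(\mu)\| \leq \|R_L(\lambda_0)\|\,(1 - |\mu-\lambda_0|\,\|R_L(\lambda_0)\|)^{-1}$ there. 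I would apply this with $\lambda_0$ ranging over the boundary line $\lambda_0 = \omega + it$, $t \in \bR$: the hypothesis $\|\lambda R_L(\lambda)\|_{\mathcal L(X)} \leq C$ gives $\|R_L(\lambda_0)\| \leq C/|\lambda_0|$, so the series converges on the open disk $B(\lambda_0, |\lambda_0|/C)$.

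Next I would take the union $\bigcup_{t \in \bR} B(\omega + it, \sqrt{\omega^2 + t^2}/C)$ of these convergence disks. A direct elementary computation shows that a point $\mu = \omega + s + ib$ with $s > 0$ lies in some disk of the family precisely when $b^2 > (C^2 - 1)(s^2 - \omega^2/C^2)$; asymptotically in $s$ this region is the exterior of a cone of half-angle $\arctan(\sqrt{C^2-1})$ around the positive real direction issuing from $\omega$. Together with the closed left half-plane, which is in $\res_X(L)$ by hypothesis, the enlarged resolvent set then contains a sector $S_{\omega,\theta}$ of the form required in condition \underline{S1}, with $\theta \in (0,\pi/2)$ depending only on $C$.

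To produce the sectoriality estimate \underline{S2}, I would restrict attention to a slightly smaller sub-sector $S_{\omega, \theta'}$ lying strictly inside the sector produced above, and for each $\mu \in S_{\omega, \theta'}$ select $\lambda_0$ to be (essentially) the orthogonal projection of $\mu$ onto the boundary line. The geometry of the sub-sector gives a uniform bound $|\mu - \lambda_0|\,\|R_L(\lambda_0)\| \leq r$ for some $r < 1$ depending only on $\theta'$, so the Neumann estimate yields $\|R_L(\mu)\| \leq C/((1-r)|\lambda_0|)$; since $|\lambda_0|$ is comparable to $|\mu - \omega|$ inside the sub-sector, this is exactly the desired bound $\|R_L(\mu)\| \leq C'/|\mu - \omega|$. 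The one real subtlety is the discrepancy between the hypothesis (which controls $\|R_L(\lambda)\|$ by $|\lambda|^{-1}$, measured from the origin) and the conclusion (measured from the vertex $\omega$); when $\omega \neq 0$ I would either first translate $L \mapsto L - \omega I$ and absorb the shift into the constant $C$, or treat a bounded neighborhood of $\omega$ separately, where continuity of $R_L$ makes the required bound automatic.
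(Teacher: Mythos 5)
Your argument is correct and is essentially the paper's own proof: the paper likewise perturbs off the boundary line $\Re \lambda = \omega$, writing $\lambda I - L = \bigl((\omega+i\mu)I - L\bigr)\bigl(I + (\lambda-(\omega+i\mu))R_L(\omega+i\mu)\bigr)$ and inverting the second factor when $|\lambda-(\omega+i\mu)| \leq |\omega+i\mu|/(2C)$, which is exactly your Neumann-series expansion about $\lambda_0 = \omega + it$, and then notes that the half-plane together with these disks of linearly growing radius contains a sector on which the resolvent estimate holds. You simply make explicit details the paper leaves implicit (the disk-union computation and the $|\lambda|$ versus $|\lambda-\omega|$ bookkeeping near the vertex), and these are handled correctly.
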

\begin{proof}
By \eqref{sectest}, $||R_L(\omega + i\mu)||_{\calL(X)} \leq \frac{C}{|\omega + i\mu|}$, so if 
$|\lambda - (\omega + i\mu)| \leq |\omega + i\mu|/2C$, then
\[
\begin{split}
(\lambda I - L) = & ((\omega + i\mu) I - L)  + (\lambda - (\omega + i\mu)) I  \\= 
&((\omega + i\mu)I  - L) \left( I + \left(\lambda - \left(\omega + i\mu\right) \right) R_L(\omega + i\mu) \right). 
\end{split}
\]
The second factor on the right is of the form $I + A$ where $||A|| \leq 1/2$, and the first factor on the right is invertible by 
hypothesis, so their product is invertible and hence $\lambda \in \res_X(L)$.    Since the radii of the balls around $\omega + i\mu$ on which 
the resolvent is defined grow asymptotically linearly in $\mu$, the union of the original half-plane together with these 
balls contains a sector $S_{\omega,\theta}$ for some $\theta \in (0,\pi/2)$. The estimate \eqref{ResEst} follows. 
\end{proof}

\subsection{Manifolds of bounded geometry} \label{sec:geo-bkgd}
As stated in the introduction, we consider the sectoriality of a general class of \emph{admissible} elliptic differential operators of even 
order $m = 2m'$, $m' \in \mathbb{N}$ acting between H\"older spaces, $L: \calC^{m+k,\alpha}(M,g)~\to~\calC^{k,\alpha}(M,g)$, where $(M,g)$ 
is a complete manifold with bounded geometry of order at least $m+ k + \alpha'$ for some $\alpha' \in (\alpha, 1)$. We may consider
any such $L$ as an {\it unbounded} operator on $\calC^{k,\alpha}(M,g)$. 

In this paper we work exclusively with the `little' H\"older spaces, which by definition are the closure of $\calC^\infty$ in
the corresponding H\"older norm. For a given $k, \alpha$, the little H\"older space of this order is a separable closed subspace
of the full H\"older space.  To lighten the notational burden, we denote this little space by the same symbol $\calC^{k,\alpha}$,
with the understanding that we never use the big H\"older spaces here. 

In this section we begin with a description of manifolds of bounded geometry, and direct the reader to Subsections \ref{subsec:funct-spcs} and \ref{sec-admiss} for more detail on the operators and function spaces which appear below. Briefly, key examples of the operators we consider
are elliptic operators arising naturally in geometric analysis of the form
\[
L = (\nabla^* \nabla)^{m'} + \text{lower order terms}
\]
where $\nabla$ is the covariant derivative acting on sections of some Hermitian vector bundle $V$ over $M$, and where
the lower order terms involve the curvature tensor of the underlying metric.  More generally, we also consider such operators 
acting between weighted (little) H\"older spaces: 
\begin{equation}
L: \, \frakw \, \calC^{m+k,\alpha}(M,g) \longrightarrow \frakw \, \calC^{k,\alpha}(M,g),
\label{wtL}
\end{equation}
where $\frakw$ is a weight function satisfying certain uniformity hypotheses, see Definition \eqref{wthyp}. The mapping \eqref{wtL} is equivalent to 
\[
(\frakw)^{-1} L \frakw = (\nabla^* \nabla)^{m'}  + S: \calC^{m +k ,\alpha}(M,g) \longrightarrow \calC^{k,\alpha}(M,g), 
\]
where $S$ is an operator of order $m-1$ which includes both the conjugate of the lower order terms in $L$ and also $(\frakw)^{-1} [ (\nabla^* \nabla)^{m'}, \frakw]$.

Let us begin by recalling the definition of a manifold of bounded geometry: 
\begin{definition}
A complete Riemannian manifold $(M,g)$ is said to have bounded geometry of order $\ell + \alpha'$, where $\ell \in \mathbb N_0$ and
$0 \leq \alpha' < 1$, if: 
\begin{itemize}
\item[a)] There exists a radius $r_0 > 0$ such that for every $q \in M$, the exponential map 
$\exp_q: \{v \in T_qM: |v| < r_0\} \to B_{r_0}(q)$ is a diffeomorphism, i.e., the injectivity radius at $q$ is greater than $r_0$;
\item[b)] For every $q \in M$, the components of the pulled back metric $\exp_q^* g$ are bounded in $\calC^{\ell,\alpha'}$ and the components 
of the matrix inverse of $\exp_q^*g$ are bounded in $\calC^0$ on $\{ v \in T_q M: |v| < r_0\}$, where the bounds are independent of $q$,
and hence uniform over $M$. 
\end{itemize}
\end{definition}
\begin{remark} We have denoted the fractional part of this uniformity order by $\alpha'$ to distinguish it from the $\alpha$ index in the
H\"older spaces we are using. We need the extra room given by the inequality $\alpha' > \alpha$
when taking limits using the Arzela-Ascoli Theorem. 
\end{remark}

It is often easier to check an intrinsic version of condition b). As discussed in \cite{Eichhorn}, for example, if $\ell \geq 1$ and $\alpha'= 0$, 
then b) is implied by 
\begin{itemize}
\item [b')] $\sup_{j \leq \ell} | \nabla^j \mathrm{Riem} | \leq C_\ell$ for some constant $C_\ell$. 
\end{itemize}
The proof of that implication presumably generalizes without difficulty to the case where $\alpha' \neq 0$. 

Any compact Riemannian manifold has bounded geometry of order equal to the regularity class of the metric. There are many other
natural examples of manifolds with bounded geometry. We list some familiar classes: 
\begin{itemize}
\item[i)] Asymptotically Euclidean or asymptotically conic manifolds,
\item[ii)] Manifolds with asymptotically cylindrical ends,
\item[iii)] Asymptotically (real) hyperbolic manifolds,
\item[iv)] Asymptotically complex hyperbolic manifolds,
\item[v)] Any symmetric space $M = G/K$ of noncompact type, with invariant metric $g$, 
or indeed, any perturbation $g = g_0 + h$ of the symmetric metric $g_0$, where $| \nabla^j h|_{g_0} \leq C_j$ for $j \leq \ell$,
\item[vi)]  Any infinite cover $(M,g)$ of a compact manifold $(M_0, g_0)$.
\end{itemize}

Let us briefly recall each of these classes.  Before doing so, observe that if $(M,g)$ has bounded geometry of order $\ell + \alpha'$
and if $\tilde{g} = g + h$, where $|h|_g \leq 1-\epsilon$ for some $\epsilon \in (0,1)$ (so that $\tilde{g}$ is boundedly equivalent to $g$)
and the $\calC^{\ell,\alpha'}$ norms of the components of $h$ are controlled in $g$ normal coordinate charts, then $\tilde{g}$ also has 
bounded geometry of order $\ell + \alpha'$. This means that we may as well describe these various classes of spaces in their simplest model forms.
Bounded geometry then follows for any metrics which are perturbations of these models in the sense above.  We are particularly 
interested in perturbations which decay to the appropriate model metrics in a suitable sense at infinity, and shall mention the rate of decay
in each of these cases.

\

\noindent{\bf Asymptotically Euclidean and asymptotically conic metrics.}
A Riemannian manifold $(M^n,g)$ is called \emph{conic at infinity} if there exists a compact Riemannian manifold $(Y, h_0)$ of dimension
$n-1$, a compact set $K$ in $M$ and a diffeomorphism from $M \setminus K$ to $[r_0, \infty) \times Y$, such that
\[
g = dr^2 + r^2 h_0.
\]
More generally, $(M,g)$ is called \emph{asymptotically conic} (AC) if it can be written as the sum of a metric which is conic at infinity and an 
extra term $k$ which satisfies $|\nabla^j k|_g  \leq C r^{-\beta - j}$ for some $\beta > 0$ and for $0 \leq j \leq \ell$, and 
$[\nabla^\ell k]_{0,\alpha'} \leq C r^{-\beta - \ell - \alpha'}$.    

In the following examples, we shall simply state a decay rate, e.g. $r^{-\beta}$, but with corresponding decay rates on the derivatives
implicit. 

An AC space is called \emph{asymptotically Euclidean} (AE) if $(Y, h_0)$ is isometric to the standard sphere.  Elliptic theory on this class of 
spaces has been very thoroughly studied for several decades; see \cite{Bartnik} for a survey of results from a `classical' perspective,
and \cite{Melrose-Mendoza} for another approach which appears frequently below. There are important generalizations of AE and AC spaces 
that arise in various geometric settings, including the classes of \emph{quasi-asymptotically conic} (QAC) manifolds \cite{DegMaz}, certain of 
the four-dimensional `gravitational instantons' (of types ALE/F/G/H), along with their higher dimensional generalizations \cite{ChenChen}, etc.

\

\noindent{\bf Asymptotically cylindrical metrics.} If $(M^n, g)$ is cylindrical at infinity, then outside some compact set it is isometric to a product cylinder $(a,\infty)\times Y^{n-1}$
with metric $dt^2 + h_0$. Setting $r = e^{-t}$ we arrive at the equivalent form
\[
g = \frac{dr^2}{r^2} + h_0,
\]
which is conformal to the exact conic metric $dr^2 + r^2 h_0$, which is useful for translating results from one setting to the other.
The allowable perturbations in this setting decay like $e^{-\beta t}$ as $t \to \infty$, or equivalently, like $r^{\beta}$ as $r \to 0$.

\

\noindent{\bf Asymptotically hyperbolic metrics.}
Next, suppose that $M$ is a compact Riemannian manifold with boundary. Fix a smooth boundary defining function
$\rho$ on $M$, i.e., $\rho \geq 0$ on $M$, $\rho^{-1}(0) = \del M$, and $d\rho \neq 0$ at the boundary.  Fix also a metric
$h_0$ on $\del M$. The class of `exact' asymptotically hyperbolic (AH) metrics consists of metrics taking the form
\[
g = \frac{d\rho^2 + h_0}{\rho^2}
\]
near $\rho = 0$. The metric $\gbar = \rho^2 g$ is called a conformal compactification of $g$. Allowable perturbations in this case are tensors
which decay like $\rho^\mu$ for some $\mu > 0$. 

This geometry mimics that of the Poincar\'e ball model of hyperbolic space, where 
\[
g = \frac{ 4 |dz|^2}{ (1-|z|^2)^2}.
\]
Thus $\rho = (1-|z|^2)/2$ and the Euclidean metric $|dz|^2$ equals a particular choice of conformal compactification $\bar{g}$.  Since
$\rho$ is not canonically defined in terms of $g$, only  the conformal class of $\gbar$ (and in particular, $\gbar|_{T \del M}$) is intrinsic to $g$.

To see that an AH space has bounded geometry, a simple calculation shows that the sectional curvatures of any AH metric 
tend to $-1$ and covariant derivatives of the curvature tensor tend to $0$, all as $\rho \to 0$. If $q \in M$ and 
$\rho(q) = \epsilon$, then the $\gbar$ ball $B_{\epsilon/2}(q)$ has inradius and diameter which are uniformly bounded away from both $0$ 
and $\infty$, and the restriction of $g$ to any such ball converges to a hyperbolic metric on a ball
of nonzero radius. We refer to \cite{Lee} for more details on this.   

\

\noindent{\bf Asymptotically complex hyperbolic metrics.}
One generalization of this last example is to the class of asymptotically complex hyperbolic manifolds.
There are various ways to define these spaces; we refer to \cite{Biquard} for one approach and a more extended discussion
than the one below. Proceeding as in the AH case, let $M$ be a compact Riemannian manifold with boundary, of even real dimension $2n$, and $\rho$ a boundary defining function. Suppose that $\eta$ is a contact form on $\del M$, i.e., $\eta$ is a $1$-form such that $\eta \wedge (d\eta)^{n-1}$ 
is everywhere nonvanishing.  Let $T$ denote the Reeb vector field on $\del M$, i.e., the unique vector
field such that $\eta(T) \equiv 1$ and $d\eta(T, \cdot) \equiv 0$.  Finally, choose a set of smooth
independent vector fields $X_1, \ldots, X_{2n-2}$ which span the kernel of $\eta$ in $T \del M$, all on $\del M$.
Let $\eta, \omega_1, \ldots, \omega_{2n-2}$ be the coframe dual to $T,  X_1, \ldots, X_{2n-2}$.
Fixing a product decomposition of a collar neighborhood of $\del M$ in $M$, we say that $g$ is (exact) 
asymptotically complex hyperbolic (ACH) if 
\[
g = \frac{d\rho^2 + \sum \omega_j^2}{\rho^2} + \frac{\eta^2}{\rho^4}
\]
in that neighborhood. (Here $\omega_j^2 = \omega_j \otimes \omega_j$ and $\eta^2 = \eta \otimes \eta$.)
The difference with the AH case is of course simply that the metric blows up faster in the $\alpha^2$ direction.
A CR (Cauchy-Riemann) structure involves not only the hyperplane bundle $\mathrm{ker}\, \eta$ but also an endomorphism
$J$ on this subbundle which satisfies $J^2 = -I$; however, this almost complex structure is not relevant
to these metric asymptotics. An allowable perturbation $k$ again decays like some $\rho^\mu$ as $\rho \to 0$.  

This mimics a standard representation of the complex hyperbolic metric on the unit ball in $\CC^n$ (with holomorphic 
sectional curvature $-4$), 
\[
g = \frac{ g_{\mathrm{Euc}} }{1-r^2} + \frac{r^2 \left( dr^2 + ( J dr )^2 \right) }{(1-r^2)^2},
\]
where $g_{\mathrm{Euc}}$ is the Euclidean metric, $r = |z|$, and $z \in \CC^n$. The monograph \cite{Biquard}  
explains the relationship of this construction to CR geometry of the boundary. Bounded geometry of ACH metrics can be 
proved quite similarly to the AH case.  The `cubes' of
approximate radius $1$ have (approximate) dimensions $\epsilon$ in the $\del_\rho$ and $X_j$ directions
and $\epsilon^2$ in the $T$ direction.

There are further generalizations to classes of exact and asymptotically quaternion hyperbolic metrics and
(asymptotically) octonion hyperbolic planes. These involve generalizing the contact structures used to define 
the ACH metric; see \cite{Biquard}. 

\

\noindent{\bf Other examples.} A Riemannian symmetric space $M = G/K$ of noncompact type, or more generally a Riemannian 
homogeneous space $M = G/H$ with invariant metric, again has bounded geometry. The definitions are
a bit more intricate, and we point to one of the many standard references on the subject, for example \cite{Hel},
at least for the symmetric space case. 

Infinite covers of compact Riemannian manifolds, where the metric is obtained by pullback via the covering map, 
are an interesting class of spaces. General results about the $L^2$-resolvent of even the scalar Laplacian 
on such manifolds are almost nonexistent, except if the covering group is `small' (e.g., amenable). 
Perhaps surprisingly, we are able to carry out the analysis below for the resolvents of admissible
operators on $\calC^{0,\alpha}$, not only on these classes of spaces, but even on general manifolds of bounded geometry. 

\

\noindent{\bf Pointed limits of manifolds of bounded geometry.} To conclude this subsection, we recall an important construction in the category of manifolds with bounded geometry 
which shows that this class of spaces is complete in a certain sense.  Let $(M,g)$ have bounded geometry of some order 
$\ell + \alpha'$ and consider the sequence of pointed spaces $(M, g, p_j)$ where $p_j$ is a sequence of points 
in $M$ which diverges to infinity. Then there is a complete Riemannian manifold $(M_\infty, g_\infty,p_\infty)$,
which is the pointed Gromov-Hausdorff limit of the sequence $(M,g,p_j)$. More specifically, for any 
$R > 0$, the $g$-ball of radius $R$ around $p_j$ in $M$ converges, at least up to passing to 
a subsequence, as a Riemannian space, to the $g_\infty$ ball of radius $R$ around $p_\infty$.
This convergence can be shown to occur in $\calC^{\ell, \alpha''}$ for any $0 < \alpha'' < \alpha'$.

This is a standard fact in the `convergence theory' of Riemannian manifolds.  This type
of construction is originally due to Cheeger, and admits many generalizations. A version that 
encompasses the particular statement above appears as Theorem 11.36 in Petersen's book 
\cite{Petersen2016}. (We are grateful to Guofang Wei for pointing out this reference.) 

As a very brief sketch of how this is proved, one first shows that some subsequence of
the $(M, g, p_j)$ converges in the Gromov-Hausdorff topology to $(M_\infty, g_\infty, p_\infty)$;
this topology is, of course, quite weak and in fact one initially only obtains its metric space structure.
Further analysis shows that this convergence happens in a much stronger topology.  Indeed,
using the bounds on the metric tensor in normal coordinates, we may extract a subsequence of 
metrics on each ball $B_g(p_j, c)$ which converges in $\calC^{\ell,\alpha''}$.  The curvature bounds 
imply that any ball of larger radius $R$ can be covered by a controlled number of balls of radius 
$r_0$. By a successive diagonalization argument, the metric tensor converges on each one of these.
The lower bound on the injectivity radius is used to show that there is no collapsing in the limit. 

Here are two examples of this sort of convergence. If $(M,g)$ has an asymptotically cylindrical end, and if $p_j$ diverges
along this cylindrical end, then the corresponding limit space $M_\infty$ is the Riemannian product cylinder $\RR \times Y$. 
Similarly, if $(M,g)$ is asymptotically hyperbolic, and $p_j$ diverges to some point $\bar{p}$ on the boundary of
the conformal compactification of $M$, then $M_\infty$ is a copy of hyperbolic space $\mathbb H^n$.  These illustrate
that the limit space can `lose' a lot of the topology of the original manifold $M$. 
\subsection{Function spaces}
\label{subsec:funct-spcs}
The definition of bounded geometry of order $\ell  + \alpha'$ relies on the H\"older norms in uniform local coordinate
charts, and the standard local Euclidean definition can be used.  To define H\"older spaces globally on $M$ we need
to say a bit more.

Fix a manifold $(M,g)$ with bounded geometry of order $\ell +\alpha'$, We define the H\"older spaces $\calC^{\kappa,\alpha}(M,g)$ for any 
$0 \leq \kappa \leq \ell$ and $0 < \alpha < \alpha' < 1$.  
Introduce the $\calC^{\kappa,\alpha}$ norm
\[
||u||_{\kappa,\alpha} := \sum_{j = 0}^\kappa \sup |\nabla^j u|_g + \sup_B \sup_{x, y \in B \atop x \neq y} \frac{ |\nabla^\kappa u(x) - \nabla^\kappa u(y)|}{
\mathrm{dist}_g(x,y)^\alpha}.
\]
The supremum in the final term on the right is over all geodesic balls $B \subset M$ of radius $r_0$ (as given in
the definition of bounded geometry).  We assume that the tensor bundles over each such $B$ are trivialized,
for example using the exponential map based at the centers of these balls.  If we are considering sections of some other vector
bundle $V \to M$, we assume the existence of a uniform set of local trivializations, relative to some `uniform' cover
of $M$ by balls $B_{r_0}(q_j)$. The details are straightforward and left to the reader. 

As noted in the Introduction, in this paper we use the \emph{`little' H\"older spaces}, $\calC^{\kappa,\alpha}(M,g)$ exclusively; 
by definition, these are the completion of $\calC^\infty$ with respect to the norms above. These little H\"older spaces have several 
of advantages: they are separable, and it is possible to use approximation arguments with them; further, one can easily define interpolation spaces that allow access to maximal regularity theory for nonlinear applications (Section Ch 35 in \cite{RFIV}).  

We recall the simple and useful fact that an equivalent norm is obtained by taking the supremum over all $x \neq y$ in $M$
in the final H\"older seminorm, rather than just over $x, y \in B$; in other words, we claim that
\[
\sup_{x, y \in M \atop x \neq y} \frac{ |\nabla^\kappa u(x) - \nabla^\kappa u(y)|}{\mathrm{dist}_g(x,y)^\alpha} \leq C ||u||_{\kappa,\alpha}
\]
for some fixed $C > 0$. If $\mathrm{dist}_g(x,y) < r_0$, this is obvious, while if $\mathrm{dist}_g(x,y) \geq r_0$, then
\[
\frac{ |\nabla^\kappa u(x) - \nabla^\kappa u(y)|}{\mathrm{dist}_g(x,y)^\alpha} \leq 2 r_0^{-\alpha} \sup |\nabla^\kappa u|_g.
\]
It is also clear that for each $B$,
\[
\sup_{x,y \in B \atop x \neq y} \frac{ |\nabla^\kappa u(x) - \nabla^\kappa u(y)|}{\mathrm{dist}_g(x,y)^\alpha} 
\leq \sup_{x,y \in M \atop x \neq y} \frac{ |\nabla^\kappa u(x) - \nabla^\kappa u(y)|}{\mathrm{dist}_g(x,y)^\alpha}.
\]
Hence, taking the supremum over all balls $B$ on the left, we conclude that we may define
the $\calC^{\kappa,\alpha}$ seminorm either as we have done initially, or else by replacing the final term in that
definition with one where the supremum is taken over all distinct $x, y \in M$. 

It is clear that it  makes sense to consider the spaces $\calC^{\kappa,\alpha}$ on a manifold of bounded geometry
of order $\ell + \alpha'$ only when $\kappa \leq \ell$ and if $\kappa = \ell$ then $\alpha \leq \alpha'$.  
We assume the strict inequality $\alpha < \alpha'$ because, as described in the last subsection, the pointed limit
of spaces of order $\ell + \alpha'$ may only have order $\ell + \alpha''$ for any $\alpha'' < \alpha'$.  (Actually,
by a standard real analysis argument, the limiting space does have order $\ell + \alpha'$ but the convergence
only takes place in the weaker norm, which may be important at certain points.)

We also define the family of {\it semiclassical} H\"older spaces $\calC^{\kappa,\alpha}_\vep(M,g)$, where $\vep$ is a parameter
in $(0,1]$. The name comes from their natural association with families of operators undergoing semiclassical degeneration,
which is described below. These spaces appear in a fundamental way in the arguments of Section \ref{sec:proofThm}. 
For any given $\kappa \leq \ell$ and $\alpha$ (as usual with $\alpha \leq \alpha'$ if $\kappa = \ell$), 
this family of spaces contains all functions $u$ such that  
\[
||u||_{\kappa, \alpha, \vep} :=  \sum_{j = 0}^\kappa  \vep^j \sup |\nabla^j u|_g + 
\vep^{\kappa + \alpha} \sup_B \sup_{x, y \in B \atop x \neq y} \frac{ |\nabla^\kappa u(x) - \nabla^\kappa u(y)|}{
\mathrm{dist}_g(x,y)^\alpha} < \infty.
\]
In other words, every derivative is accompanied by a power of $\vep$ and the $\alpha$ H\"older seminorm has a factor of $\vep^\alpha$.

In fact, these semiclassical spaces are simply the ordinary H\"older spaces associated to the family of rescaled metrics
$\vep^{-2} g = g_\vep$: 
\[
\calC^{\kappa,\alpha}_\vep(M,g)  \cong \calC^{\kappa,\alpha}(M, g_\vep).
\]
Clearly each $g_\vep$ has bounded geometry of order $\ell + \alpha'$, and the bounds are uniform as $\vep \to 0$. 
\begin{remark}
From these last remarks, it is clear that in the definition of the semiclassical H\"older seminorm above, we may take
the supremum either over all balls $B$ of radius $1$ or alternately of radius $\vep$ with respect to $g$.
\label{scnormballs}
\end{remark}

\subsection{Admissible differential operators}\label{sec-admiss}

We shall prove our main sectoriality estimate for any differential operator $L$ which is strongly elliptic, 
and satisfies an additional uniformity condition.

We begin by recalling that the principal symbol of $L$ of order $m$ is a smooth function $\sigma_m(L)(x, \xi)$ on $T^*M$ 
which restricts to be a homogeneous polynomial of order $m$ (matrix-valued if $L$ acts between bundles) on each fiber $T_x^*M$.
There are various invariant ways to define this principal symbol, but the most familiar is that it is obtained by dropping
all terms of order less than $m$ and then replacing each derivative $\del_x^\alpha$, $|\alpha| = m$, by the monomial
$(i \xi)^\alpha$. (The factor of $i$ is customary because of the relationship of this symbol with the Fourier transform.) 

\begin{definition} We say that $L$ is \emph{strongly elliptic} if $\sigma_m(L)(x,\xi)$ has numerical range (or spectrum, if it is a matrix) 
contained in a sector in the right half-plane:
\[
\mathrm{spec}\, \sigma_m(L)(x,\xi) \subset \{\lambda \in \CC: |\arg(\lambda)| \leq \theta'  < \pi/2\}
\]
for all $(x,\xi) \in T^* M$.
\end{definition}

It is straightforward to check that if $L$ is strongly elliptic, then its order $m$ is even. 
If $L$ is symmetric and has real-valued coefficients, then $\sigma_m(L)(x,\xi)$ is real-valued.  For example, if 
\[
L = (\nabla^* \nabla)^{m/2} + \text{lower order terms},
\]
then $\sigma_m(L)(x,\xi) = |\xi|^m$ (or $|\xi|^m$ times the identity matrix), which clearly satisfies this condition. 

\begin{definition}
\label{def-admiss}
We say that $L$ is uniform of order $\ell + \alpha'$ (relative to a metric $g$ with bounded geometry of order $\ell + \alpha'$ on 
a manifold $M$) if the following two conditions are satisfied:
\begin{itemize}
\item[i)] the pullback by $\exp_q$ of the operator $L$ has coefficients bounded in $\calC^{\ell, \alpha'}$ in each ball 
$B_{r_0}(q)$; 
\item[ii)] there exists a closed cone $\Gamma$ strictly contained in $\{\lambda \in \CC: \mathrm{Re}\, \lambda > 0\} \cup \{0\}$ such that
if $\zeta \in \CC \setminus \Gamma$, then the endomorphism $\zeta I - \sigma_m(L)(x,\xi)$ is invertible, with the inverse satisfying
\[
|| (\zeta I - \sigma_m(L)(x,\xi))^{-1}|| \leq C (1 + |\xi|)^{-m}
\]
for some fixed $C$ (depending on $\zeta$) for all $(x,\xi)$ in the  cotangent bundle of $M$.
\end{itemize}
\label{admop}
\end{definition}
\begin{definition}
The elliptic differential operator $L$ is called admissible if it is both strongly elliptic and uniform of order $\ell + \alpha'$.
\label{admop2}
\end{definition}

It is important for our purposes that admissibility is preserved under passage to a limiting space:
\begin{lemma}
Suppose that $(M,g)$ has bounded geometry of order $\ell + \alpha'$, and let $p_j$ be a diverging sequence 
of points in $M$. Let $L$ be an admissible differential operator on $M$, as described above.  If $(M, g, p_j)$ converges 
to some limiting space $(M_\infty, g_\infty, p_\infty)$ in $\calC^{\ell, \alpha''}$, then (some subsequence of) the 
restrictions of the operator $L$ to balls $B_R(p_j)$ converges in $\calC^{\ell, \alpha''}$, as both $j \to \infty$ and $R \to \infty$ 
to an operator $L_\infty$ on this limiting space, and any such limiting operator $L_\infty$ obtained in this way is uniform 
on its space of definition.
\label{adm-persists}
\end{lemma}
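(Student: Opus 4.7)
The plan is to combine Arzelà--Ascoli with a diagonalization argument to produce $L_\infty$, and then verify that each clause of Definition~\ref{admop2} passes to the limit.

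First, for each fixed $R > 0$, the convergence $(M,g,p_j) \to (M_\infty, g_\infty, p_\infty)$ in $\calC^{\ell,\alpha''}$ supplies (after passing to a subsequence) diffeomorphisms $\phi_j^R \colon B_R^{g_\infty}(p_\infty) \to B_R^{g}(p_j)$ with $(\phi_j^R)^* g \to g_\infty$ in $\calC^{\ell,\alpha''}$. Cover each such ball by a uniformly locally finite collection of normal coordinate balls of radius $r_0$; the number of such balls is controlled by $R$ and the uniform volume bound coming from bounded geometry. Using parallel-transport trivializations of $E$ along radial geodesics (which have uniform $\calC^{\ell,\alpha'}$ bounds on manifolds of bounded geometry), I would express $L|_{B_R(p_j)}$ as a collection of matrix-valued differential operators whose coefficients lie in a bounded subset of $\calC^{\ell,\alpha'}$ by Definition~\ref{admop}(i). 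Since the inclusion $\calC^{\ell,\alpha'} \hookrightarrow \calC^{\ell,\alpha''}$ is compact on precompact domains, a subsequence of the pulled-back coefficients converges in $\calC^{\ell,\alpha''}$. Running this argument for a sequence $R_k \to \infty$ and extracting a diagonal subsequence produces an operator $L_\infty$ defined on all of $M_\infty$.

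Next, I would verify that $L_\infty$ is admissible. Condition (i) of Definition~\ref{admop} is immediate: the uniform $\calC^{\ell,\alpha'}$ bounds on coefficients of $L$ persist (with $\alpha'$ relaxed to $\alpha''$) under the limit, and the needed normal coordinate charts on $M_\infty$ exist because the limiting space itself inherits bounded geometry of order $\ell + \alpha''$. For Condition (ii), the principal symbol $\sigma_m(L_\infty)(x,\xi)$ is the $\calC^0$ limit of pulled-back symbols $\sigma_m(L)(\phi_j^{R}(x), (d\phi_j^{R})^{-\top}\xi)$. Thus for any fixed $\zeta \notin \Gamma$ and any $(x,\xi) \in T^* M_\infty$, invertibility of $\zeta I - \sigma_m(L)$, continuity of matrix inversion, and the uniform resolvent bound together yield $\|(\zeta I - \sigma_m(L_\infty)(x,\xi))^{-1}\| \leq C(1+|\xi|)^{-m}$ with the same constant $C$. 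Strong ellipticity of $L_\infty$ follows by the same continuity argument, since the spectral cone in Definition~\ref{admop} is closed.

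The main obstacle is the coherent choice of bundle trivializations along the divergent sequence $p_j$. To even speak of convergence of operators we must identify the restrictions $E|_{B_R(p_j)}$ with bundles over $B_R(p_\infty)$ in a way that is compatible with the diffeomorphisms $\phi_j^R$ and that respects the limiting procedure on $g$. Parallel transport along radial geodesics provides such trivializations, but a careful check is needed to confirm that the connection one-forms pulled back by $\phi_j^R$ converge in $\calC^{\ell-1,\alpha''}$ and that the transition functions between overlapping radial-gauge trivializations remain uniformly bounded. Once this geometric bookkeeping is in place, the rest reduces to a standard application of Arzelà--Ascoli together with closedness of the cone $\Gamma$ under taking $\calC^0$ limits of symbols.
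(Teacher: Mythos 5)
Your proposal is correct and follows essentially the same route as the paper, which disposes of this lemma in two sentences by invoking exactly the Arzel\`a--Ascoli plus diagonalization argument you spell out (compactness of $\calC^{\ell,\alpha'}\hookrightarrow\calC^{\ell,\alpha''}$ on the uniform coefficient bounds, then passing the closed symbol conditions to the limit). The bundle-trivialization bookkeeping you flag is handled in the paper simply by the standing assumption of uniform local trivializations in the definition of the H\"older spaces, so your radial-gauge discussion is extra detail rather than a divergence in method.
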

\begin{proof}
By a diagonalization process using Arzela-Ascoli, it is clear that $L$ induces a limiting operator $L_\infty$ on $M_\infty$, and 
that $L_\infty$ is again strongly elliptic. (Its coefficients are only in $\calC^{\ell, \alpha''}$ for any $\alpha'' < \alpha'$.) 
\end{proof}

There is an additional important relationship between $L$ and its limiting operators:
\begin{prop}
There is an inclusion
\[
\bigcap \res_{\calC^{k,\alpha}}\, (L_\infty) \supset \res_{\calC^{k,\alpha}}\,(L), 
\]
or equivalently, 
\[
\bigcup \spec_{\calC^{k,\alpha}} (L_\infty) \subset \spec_{\calC^{k,\alpha}}(L),
\]
In both cases, the intersection or union is over all possible limiting spaces $M_\infty$ and model operators $L_\infty$. 
\label{limspecrel}
\end{prop}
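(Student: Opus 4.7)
The plan is to show that $\lambda \in \res_{\calC^{k,\alpha}}(L)$ implies $\lambda \in \res_{\calC^{k,\alpha}}(L_\infty)$ for every limiting operator $L_\infty$. By Lemma \ref{adm-persists}, along some diverging sequence $p_j \in M$ there are diffeomorphisms $\Phi_j: B_{R_j}(p_\infty) \to B_{R_j}(p_j) \subset M$ with $R_j \to \infty$ such that the pulled-back operators $L_j$ converge to $L_\infty$ in $\calC^{\ell,\alpha''}$ on compact subsets of $M_\infty$. I would fix an intermediate exponent $\alpha < \alpha'' < \alpha'$; this simultaneously provides Arzel\`a-Ascoli compactness and convergence of $L_j \to L_\infty$ as bounded operators $\calC^{m+k,\alpha}_{\mathrm{loc}} \to \calC^{k,\alpha}_{\mathrm{loc}}$ on $M_\infty$. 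The goal is then to establish that $\lambda I - L_\infty$ is a bounded bijection from $\calC^{m+k,\alpha}(M_\infty)$ onto $\calC^{k,\alpha}(M_\infty)$, which I would do by proving surjectivity and injectivity separately.

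For surjectivity, the construction is: take $f_\infty \in \calC^{k,\alpha}(M_\infty)$, fix cutoffs $\chi_R \in C_c^\infty(M_\infty)$ with $\chi_R \equiv 1$ on $B_R(p_\infty)$, $\mathrm{supp}\,\chi_R \subset B_{2R}(p_\infty)$, and $|\nabla^a \chi_R|_{g_\infty} \leq C_a R^{-a}$, transport via pushforward $f_{R,j} := (\Phi_j)_*(\chi_R f_\infty)$, and solve $(\lambda I - L) u_{R,j} = f_{R,j}$ on $M$ using the resolvent bound. This yields $\|u_{R,j}\|_{\calC^{m+k,\alpha}(M)} \leq C \|f_\infty\|_{\calC^{k,\alpha}(M_\infty)}$. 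Pulling back to $\tilde u_{R,j} := \Phi_j^* u_{R,j}$, one has $(\lambda I - L_j)\tilde u_{R,j} = \chi_R f_\infty$ on $B_{R_j}(p_\infty)$. A diagonal subsequence converges in $\calC^{m+k,\alpha''}_{\mathrm{loc}}(M_\infty)$ to some $u_\infty$; on any fixed compact set $\chi_R$ is eventually $1$ and $L_j \to L_\infty$, so in the limit $(\lambda I - L_\infty) u_\infty = f_\infty$ everywhere, with the uniform bound preserved by lower semicontinuity of the H\"older norm.

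For injectivity, suppose $u_\infty \in \calC^{m+k,\alpha}(M_\infty)$ satisfies $(\lambda I - L_\infty) u_\infty = 0$. The identity $(\lambda I - L_\infty)(\chi_R u_\infty) = -[L_\infty, \chi_R] u_\infty$ produces a right-hand side supported in $B_{2R}(p_\infty) \setminus B_R(p_\infty)$, with $\calC^{k,\alpha}$-norm bounded by $CR^{-1}\|u_\infty\|_{\calC^{m+k,\alpha}}$, since $[L_\infty, \chi_R]$ is of order $m-1$ with coefficients carrying factors $\nabla^a \chi_R$ for $a \geq 1$. Transporting to $v_{R,j} := (\Phi_j)_*(\chi_R u_\infty)$, one gets $(\lambda I - L) v_{R,j} = (\Phi_j)_*\bigl(-[L_\infty, \chi_R] u_\infty\bigr) + \eta_{R,j}$ where $\|\eta_{R,j}\|_{\calC^{k,\alpha}(M)} \to 0$ as $j \to \infty$ with $R$ fixed, by the coefficient convergence $L_j \to L_\infty$. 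The resolvent bound on $M$ then gives $\|v_{R,j}\|_{\calC^{m+k,\alpha}(M)} \leq CKR^{-1}\|u_\infty\|_{\calC^{m+k,\alpha}} + o_j(1)$. Since $v_{R,j}\circ\Phi_j = \chi_R u_\infty$, sending $j \to \infty$ then $R \to \infty$ yields $\sup_{B_R(p_\infty)} |u_\infty| \to 0$, forcing $u_\infty \equiv 0$.

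The hard part will be coordinating the two limits in both halves of the argument. For each $R$, one must first drive the transport error $(L_j - L_\infty)(\chi_R \cdot)$ to zero by choosing $j$ large, and only then send $R \to \infty$ to harvest the $O(R^{-1})$ decay from the commutator. The strict inequality $\alpha < \alpha'$ in the bounded-geometry hypothesis is precisely what makes this feasible: it leaves room for the intermediate exponent $\alpha''$ in Lemma \ref{adm-persists}, which is exactly strong enough to dominate the $\calC^{k,\alpha}$ operator-norm differences encountered here while still being compatible with Arzel\`a-Ascoli.
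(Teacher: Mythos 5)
Your route is genuinely different from the paper's. You fix $\lambda \in \res_{\calC^{k,\alpha}}(L)$ and build the resolvent of $L_\infty$ directly: surjectivity by solving transplanted, truncated problems on $M$ and extracting a limit, injectivity by a cutoff/commutator estimate. The paper instead argues by contradiction: it splits $\lambda \in \spec_{\calC^{k,\alpha}}(L_\infty)$ into three failure modes (nontrivial kernel, dense but non-closed range, proper closed range), transplants exact or approximate kernel elements from $M_\infty$ to $M$ to violate the lower bound that $\lambda \in \res_{\calC^{k,\alpha}}(L)$ forces, and handles the cokernel case by passing to the adjoint $(\bar\lambda I - L_\infty^{*})$ together with elliptic regularity. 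Your injectivity half is essentially the paper's case (a) computation run quantitatively; the coordination of the two limits ($j \to \infty$ at fixed $R$, then $R \to \infty$) is correct, and your $\eta_{R,j}$ is exactly the paper's $\chi_i(L_\infty - L)u$ term. What your approach buys is that it avoids the duality step entirely; what it costs is that it must manufacture a solution \emph{on} $M_\infty$, which is where the trouble lies.

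The gap is in the surjectivity half. First, a sequence bounded in $\calC^{m+k,\alpha}$ is not precompact in $\calC^{m+k,\alpha''}_{\mathrm{loc}}$ for $\alpha'' > \alpha$; Arzel\`a--Ascoli only yields convergence in $\calC^{m+k,\beta}_{\mathrm{loc}}$ for $\beta < \alpha$. The room $\alpha < \alpha'' < \alpha'$ is available for the coefficients of $L_j$ (uniform at order $\ell + \alpha'$), not for your solutions $\tilde u_{R,j}$, which are controlled only at exponent $\alpha$. More substantively, the limit $u_\infty$ obtained by lower semicontinuity lies a priori only in the \emph{big} H\"older space $\calC^{m+k,\alpha}(M_\infty)$, whereas the proposition concerns the resolvent set of $L_\infty$ on the \emph{little} H\"older space, whose domain is the little $\calC^{m+k,\alpha}(M_\infty)$: locally uniform limits of little-space functions with uniform big-norm bounds need not be in the little space (smooth functions can converge locally uniformly, with uniform $\calC^{0,\alpha}$ bounds, to a cut-off $|x|^{\alpha}$). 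So bounded surjectivity of $\lambda I - L_\infty$ on its little-space domain is not yet established; the paper's contrapositive argument never produces a new function on $M_\infty$ and so never meets this issue. Your scheme can be repaired: run your injectivity computation quantitatively to get the a priori bound $\|u\|_{m+k,\alpha} \leq C\|(\lambda I - L_\infty)u\|_{k,\alpha}$ on the little-space domain; prove surjectivity first for $f_\infty$ in a dense subclass such as $\calC^{k,\alpha''}$, where interior Schauder estimates at exponent $\alpha''$ (legitimate, since the coefficients of $L_\infty$ are locally $\calC^{\ell,\alpha''}$ with $\ell \geq k$) give the limit a uniform $\calC^{m+k,\alpha''}$ bound and hence membership in the little $\calC^{m+k,\alpha}$ space; then the a priori bound shows the range is closed, and density finishes the argument.
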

\begin{proof}
Using this second formulation, suppose that $\lambda \in \spec_{\calC^{k,\alpha}}(L_\infty)$ for some limit $L_\infty$,
i.e., $\lambda I - L_\infty$ is not boundedly invertible on $\calC^{k,\alpha}(M_\infty, g_\infty)$.  In the following considerations, 
note that the natural domain of this unbounded map is $\calC^{k+m,\alpha}(M_\infty, g_\infty)$. 
There are three ways that invertibility might fail: 
\begin{itemize}
\item[a)] there exists a nontrivial function $u \in \calC^{m+k,\alpha}$ such that $L_\infty u = \lambda u$;
\item[b)] the range of $\lambda I - L_\infty$ is dense in $\calC^{k,\alpha}$, but not closed;
\item[c)] the closure of the range of $\lambda I - L_\infty$ is equal to some proper closed subspace of $\calC^{k,\alpha}$. 
\end{itemize}
We show that each of these three possibilities is incompatible with the assumption that $\lambda \not\in \spec_{\calC^{k,\alpha}}(L)$. 
In the first case, suppose that $u$ is a $\calC^{m+k,\alpha}$ solution of this limiting equation. 
Choose a sequence of radii $R_i \to \infty$, and let $\chi_i$ be a sequence of smooth cutoff functions on $M_\infty$
such that 
\[
\chi_i = \begin{cases} & 1\ \mbox{on}\ B_{R_i/2}(p_\infty) \\ &  0\ \mbox{outside}\ B_{R_i}(p_\infty) \end{cases},
\]
$0 \leq \chi_i \leq 1$ everywhere, and
\[
|\nabla^q \chi_i| \leq C/R_i^q
\]
for $q \leq m$ and with $C$ independent of $i$. For fixed $i$, the ball $B_{R_i}(p_\infty)$ in $M_\infty$ is a limit as $j \to \infty$ of balls $B_{R_i}(p_j)$ 
in $M$. Using this, we may transplant $\chi_i u$ to $B_{R_i}(p_j)$ and then extend it to equal $0$ on the rest of $M$. 

We now compute that
\[
(\lambda I - L) (\chi_i u) := h_i =\chi_i ( \lambda I - L_\infty) u + \chi_i (L_\infty - L) u + [L, \chi_i] u.
\]
Clearly, there exist constants $c_1, c_2$ such that $0 < c_1 \leq ||\chi_i u||_{k,\alpha}  \leq c_2$, uniformly in $i$, and using the 
limiting properties in this construction, $||h_i||_{k,\alpha} \to 0$.   Since the $\chi_i u$ span an infinite dimensional space,
we conclude that $(\lambda I - L)$ does not have closed range on $\calC^{k,\alpha}$, contrary to the choice of $\lambda$.

Next suppose that $(\lambda I - L_\infty)$ has dense but nonclosed range, so it does not have a bounded inverse.
There exists a sequence $u_i$ on $M_\infty$ with infinite dimensional span such that $||u_i||_{k,\alpha} = 1$ and 
$||(\lambda I - L_\infty) u_i||_{k,\alpha} \to 0$.
Precisely the same transplantation argument used above shows that $c_1 \leq ||\chi_i u_i||_{k,\alpha} \leq c_2$ and
$||( \lambda I - L) (\chi_i u_i)||_{k,\alpha} \to 0$ on $M$, which is once again a contradiction.

Finally, suppose that the range of $(\lambda I - L_\infty)$ is equal to or at least dense in some proper closed
subspace. Then the dual operator $(\bar{\lambda} I - L_\infty^*)$ has nontrivial nullspace in $(\calC^{k,\alpha})^*$.
This dual space is distributional, of course, but since $L_\infty^*$ is elliptic, any element $v$ of its nullspace
is again as regular as the coefficients of the operator and the metric allow, and this element must be bounded
as well (else it would be easy to find some $\phi \in \calC^{k,\alpha}$ such that $\langle v, \phi \rangle$ is
undefined).  We are then in the situation of the first case, once we observe that the dual operator $L^*$ 
is admissible.

This completes the proof.
\end{proof}

\subsection{Examples of admissible operators}
\label{subsec:geomlap}
The main examples of admissible operators that we have in mind are generalized Laplacians on Riemannian manifolds with 
bounded geometry, or more generally, operators of the form $(\nabla^* \nabla)^{m/2} + S$, where $S$ is an operator
of order $m-1$ usually closely associated with the metric $g$.  We begin with the second order case.

By definition a generalized Laplacian is an operator of the form
\[
\nabla^* \nabla + \mathcal K,
\]
acting on sections of some tensor bundle $E$ over $M$ (or slightly more generally, a twisted spin bundle -- the key feature 
is that its connection is induced from the Levi-Civita connection for $g$.) Here $\nabla^*$ is the adjoint of the covariant 
derivative with respect to the natural inner product on each 
fiber of $E$ and with respect to the the volume form $dV_g$.  The term $\mathcal K$ is a symmetric endomorphism 
of $E$ obtained via contractions of sums of tensor products of the curvature tensor and its covariant derivatives.

The following is a list standard examples of such operators: 
\begin{itemize}
\item[a)] The scalar Laplacian $\Delta_g = \nabla^* \nabla$ acts on the trivial rank $1$ bundle; slightly
more generally, we also consider the Hodge Laplace operator $\Delta_{g,k} = d \delta + \delta d$ acting on sections
of the bundle of exterior $p$-forms, $p = 0, \ldots, n$. The original Weitzenb\"ock formula states that
\[
\Delta_{g,p} = \nabla^* \nabla + \mathcal K_p,
\]
where $\mathcal K_p$ is an endomorphism of $\bigwedge^pM$ constructed from the curvature tensor. For example, 
$\mathcal K_1 = \mathrm{Ric}$, considered as a symmetric endomorphism on $1$-forms.
\item[b)] Next, consider the Lichnerowicz Laplacian
\[
\nabla^* \nabla  + 2 (\mathrm{Ric} - \mathrm{Riem})
\]
where $\mathrm{Ric}$ is the Ricci tensor and $\mathrm{Riem}$ the full curvature tensor. These act as symmetric endomorphisms 
on symmetric $2$-tensors via
\begin{equation*}
\begin{split}
h_{ij} & \mapsto (\mathrm{Ric}(h))_{ij} = \frac12 \left( \mathrm{Ric}_{ik} h^k_j + \mathrm{Ric}_{jk} h^k_i\right), \\
h_{ij} & \mapsto (\mathrm{Riem}(h))_{ij} = R_{ipjq}h^{pq}.
\end{split}
\end{equation*}

\item[c)] Generalizing a) in a different way is the conformal Laplacian
\[
\nabla^* \nabla + \frac{n-2}{4(n-1)} R_g,
\]
acting on scalar functions, where $R_g$ is the scalar curvature of the metric. 
\end{itemize}

\medskip

Each of the operators in the list above is symmetric,  i.e., 
\[
\langle L u, v \rangle = \langle u, L v \rangle\ \ \mbox{for}\ \ \ u, v \in \mathcal C^\infty_0(M).
\]
A classical theorem due to Chernoff \cite{Chernoff} states that because $g$ is complete, each of these has a unique self-adjoint extension
as an unbounded operator on $L^2(M, dV_g)$.  Self-adjointness guarantees that the $L^2$ spectrum lies on the real line.
If $g$ has bounded geometry of high enough order, the pointwise norm of the endomorphism $\mathcal K$ is uniformly bounded. 
Since $\nabla^* \nabla \geq 0$, we deduce that 
\[
\nabla^* \nabla + \mathcal K  \geq -C  \Longrightarrow  \spec_{L^2}( \nabla^* \nabla + \mathcal K) \subset
[-C, \infty). 
\]

There are many interesting higher order elliptic operators associated to the metric $g$.  The most well-known are the higher
order ``GJMS operators'', which generalize the conformal Laplacian above. The GJMS operator $P_m$ of order $m$ is a conformally covariant
operator which is simply equal to $(\nabla^*\nabla)^{m/2}$ if $g$ is the flat Euclidean metric, but in general has
a (complicated) set of lower order terms involving the curvature tensor.  This operator exists only for $m \leq n$ if $n$ is
even, and for all $m$ if $n$ is odd.   A (very thoroughly studied) example is the Paneitz operator
\[
P_4 = \Delta^2 - \delta ( (n-2) J -4V) d + (n-4)Q,
\]
where $V$ is the so-called Schouten tensor of the metric $g$, $J$ its trace, and $Q$ an associated scalar quantity called the $Q$-curvature. 
It is known \cite{Graham-Zworski} that these operators are symmetric, and the same reasoning as above implies that the $L^2$ spectrum
lies in a half-line $[-C,\infty)$. 

Notice that we have said nothing about the spectrum of any of these operators on $\calC^{0,\alpha}(M,g)$, and in general the relationship
between the $\calC^{0,\alpha}$ and the $L^2$ spectrum may be quite difficult to understand.   In many geometric problems, however,
we actually wish to study the action of $L$ not on $\calC^{0,\alpha}$ itself, but between some {\it weighted} H\"older spaces:
\begin{equation}
L: \mathfrak w\, \calC^{m,\alpha}(M,g) \longrightarrow \mathfrak w\, \calC^{0,\alpha}(M,g),
\label{mapwhs}
\end{equation}
where, by definition, $\mathfrak w$ is a strictly positive $\mathcal C^\infty$ (or $\calC^{\ell,\alpha'}$) function and
\[
\mathfrak w\, \calC^{\kappa,\alpha}(M,g) = \{ u = \mathfrak w\, v: v \in \calC^{\kappa,\alpha}(M,g)\}.
\]
Observe that the mapping \eqref{mapwhs} is equivalent to
\begin{equation}
L_{\mathfrak w} = {\mathfrak w}^{-1} L \mathfrak w: \calC^{m,\alpha}(M,g) \longrightarrow \calC^{0,\alpha}(M,g).
\label{mapwhs2}
\end{equation}
Observe also that since $\lambda I - L_{\mathfrak w} = \mathfrak w^{-1}( \lambda I - L) \mathfrak w$,
the spectra of \eqref{mapwhs} and \eqref{mapwhs2} are the same. 

In most of the specific examples of manifolds of bounded geometry we have given, it is possible to prove that there exist
weight functions $\mathfrak w$ for which \eqref{mapwhs} (or equivalently \eqref{mapwhs2}) is Fredholm, i.e. has
closed range and finite dimensional kernel and cokernel. It is
sometimes possible to choose $\mathfrak w$ so that $\mathfrak w\, \calC^{0,\alpha}(M,g) \subset L^2$, and if this
is the case, and if $f \in \mathfrak w\, \calC^{0,\alpha}(M,g)$, then so long as $\lambda \not\in \spec_{L^2}(L)$, there
exists a function $u \in L^2$ with $(\lambda I - L)u = f$. Local elliptic regularity implies that $u \in \calC^{m,\alpha}$ on
each ball $B_{r_0}(p)$. If we could then somehow show that $u \in \mathfrak w\, \calC^{m,\alpha}$, we would have
obtained information about $\spec_{\calC^{0,\alpha}}(L)$.  

Before doing so, we list some necessary assumptions on these weight functions.
\begin{definition}
A weight function $\mathfrak w$ is called uniform with respect to $L$ if:
\begin{itemize}
\item[a)] the mappings \eqref{mapwhs} and \eqref{mapwhs2} are Fredholm;
\item[b)] the conjugated operator $L_{\mathfrak w}$ is admissible with respect to the metric $g$.
\end{itemize}
\label{wthyp}
\end{definition}
It is clear that $L_{\mathfrak w}$ is strongly elliptic if $L$ is, since they have the same leading order terms. 
The uniformity of $L_{\mathfrak w}$ imposes a strong condition on the weight function and its derivatives.  
For example, if $L$ is the scalar Laplacian, then
\[
\Delta_{\mathfrak w} = \Delta + \mathfrak w^{-1} [\Delta, \mathfrak w] = \Delta + 
2 \frac{\nabla \mathfrak w}{\mathfrak w} \cdot \nabla + \frac{\Delta \mathfrak w}{\mathfrak w} 
\]
involves first and second derivatives of $\mathfrak w$.  Thus all these lower order terms must be uniform
in the sense we have described earlier.

\subsection{The resolvent on asymptotically hyperbolic spaces}
We conclude this section with a description of one particular setting, namely the class of asymptotically hyperbolic spaces,
where there is a somewhat more direct path to understanding the $\calC^{0,\alpha}$ spectrum of generalized Laplacians.

Suppose that $(M,g)$ is asymptotically hyperbolic, as described in Section \ref{sec:geo-bkgd}, and let $L = \nabla^* \nabla + \mathcal K$. 
Choose coordinates $(x,y)$ on $M$ near the boundary of $M$, where $x$ is a boundary defining function and $y$ is a local 
coordinate on the boundary extended to this collar neighborhood. Using this we identify the collar neighborhood with
$[0,1) \times \del M$. It is known that we can choose these functions in such a way that
\[
g = \frac{dx^2 + h(x,y)}{x^2},
\]
where $x \mapsto h(x, \cdot)$ is a family of tensors on $\del M$ in this collar neighborhood decomposition, and $h(0,y)$
is any prescribed metric representing the conformal class on $\del M$ associated to $g$. 

We have already noted that the $L^2$ spectrum of $L$ lies in some half-line $[-C,\infty)$, hence $\res_{L^2}(L) \supset \CC \setminus [-C,\infty)$.
Let $R_L$ denote both the resolvent of $L$ as an abstract operator, but also its Schwartz kernel, which is a distribution on $M \times M$. This
distribution, $R_L(\lambda; x, y, \tilde{x}, \tilde{y})$, is singular along the diagonal, where $x = \tilde{x}$ and $y = \tilde{y}$, and has additional
singularities along the boundaries where $x \to 0$ or $\tilde{x} \to 0$. The nature of these singularities can be understood in a very
detailed way using the methods of geometric microlocal analysis. We refer to \cite{MazzeoEdge} for the construction of this distribution.

The question we wish to consider is whether there exists a range of $\mu$ for which
\begin{equation}
R_L:  x^\mu \calC^{k,\alpha}(M,g) \longrightarrow x^\mu \calC^{k,\alpha}(M,g),
\label{resahmu}
\end{equation}
is bounded if $\mathrm{Re}\,\lambda \leq -C$, or equivalently, whether the conjugated Schwartz kernel 
\begin{equation}
x^{-\mu} R_{L}(\lambda; x,y, \tilde{x}, \tilde{y}) (\tilde{x})^\mu
\label{conjker}
\end{equation}
acting by convolution induces a bounded mapping on $\calC^{0,\alpha}$. 

This is true for $\mu$ lying in a certain interval determined by the so-called indicial roots of $L$. To describe this more carefully,
recall that an indicial root of $L$ is a number $\gamma \in \CC$ such that 
\[
L (x^\gamma u(x,y)) = \mathcal O(x^{\gamma+1}),
\]
where $u$ is any function smooth up to $\del M$. (We consider only the scalar case for notational simplicity.)  It is not hard
to see that this can only happen if there is some leading order cancellation, which depends only on some algebraic condition
determined by $\gamma$ and the values at $x=0$ of certain of the coefficients of $L$.  For a second order scalar operator,
this algebraic condition is a quadratic polynomial in $\gamma$, and hence there are two indicial roots. For a higher order
operator or system, there are more.  Again in this second order setting where $L$ is assumed to be symmetric on $L^2$, 
these indicial roots take the form
\[
\gamma^\pm =  \frac{n-1}{2} \pm \zeta_0
\]
for some $\zeta_0$ which is either real and nonnegative or else purely imaginary. We can define the indicial roots of $\lambda I - L$ in 
the same way, and write these as
\[
\gamma^\pm(\lambda) = \frac{n-1}{2} \pm \zeta_0(\lambda).
\]
If $\lambda$ is real and sufficiently negative, then $\zeta_0(\lambda) > 0$. There exists some $C_0 \in \RR$ such that if
$\lambda > C_0$ then $\zeta_0(\lambda)$ is purely imaginary. If $\lambda \in \CC \setminus [C_0, \infty)$, then 
$\mathrm{Re}\, \zeta_0(\lambda) > 0$ and tends to infinity as the distance from $\lambda$ to $[C_0,\infty)$ gets larger. 
One consequence of this is that any $\lambda > C_0$ lies in the continuous $L^2$-spectrum of $L$, see \cite{MazzeoUniCtn}.

Fix the half-plane $\mathrm{Re}\, \lambda \leq -B$, and define 
\[
\delta = \inf_{\mathrm{Re}\, \lambda \leq -B}  \mathrm{Re} \, \zeta_0(\lambda). 
\]
Note then, by the remarks immediately above, we can increase $\delta$ by increasing $B$ along the real axis.

We need a key structural theorem from \cite{MazzeoEdge} about the pointwise behavior of the Schwartz kernel of $R_L(\lambda)$: 
\begin{prop} \label{prop:AHresolventdecay}
The distribution $R_L(\lambda; x, y, \tilde{x}, \tilde{y})$ satisfies
\begin{equation*}
\begin{split}
& | R_L( \lambda; x,y, \tilde{x}, \tilde{y})| \leq C x^{\frac{n-1}{2} + \delta},\ \ x \to 0,\ \tilde{x} \geq c > 0; \\
& | R_L( \lambda; x,y, \tilde{x}, \tilde{y})| \leq C \tilde{x}^{\frac{n-1}{2} + \delta},\ \ \tilde{x} \to 0,\ x \geq c > 0; 
\end{split}
\end{equation*}
\end{prop}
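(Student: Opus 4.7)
The plan is to deduce these pointwise estimates from the structure of the resolvent Schwartz kernel as constructed in the $0$-calculus (or ``edge calculus'') of \cite{MazzeoEdge}. In that reference the kernel $R_L(\lambda; x,y,\tilde{x},\tilde{y})$ is shown to lift to a polyhomogeneous conormal distribution on the $0$-stretched product $M \times_0 M$, obtained by performing a radial blow-up of the corner $\partial M \times \partial M$ inside $M \times M$. This blown-up space has three boundary hypersurfaces relevant here: the left face $\mathrm{lf}$ (the locus $x \to 0$ with $\tilde{x}$ bounded away from $0$), the symmetric right face $\mathrm{rf}$, and the front face $\mathrm{ff}$ (where $x, \tilde{x} \to 0$ at comparable rates). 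Away from the lifted diagonal, which meets only $\mathrm{ff}$ in the interior of that face, the lift is smooth up to each of these three boundary hypersurfaces after pulling out the appropriate leading exponent.

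The leading order of the kernel at $\mathrm{lf}$ is governed by the \emph{indicial operator} of $\lambda I - L$ at $x = 0$. Freezing coefficients at the boundary and using the natural dilation structure converts $\lambda I - L$ into an ODE in $x \partial_x$ whose characteristic polynomial has roots $\gamma^\pm(\lambda) = \frac{n-1}{2} \pm \zeta_0(\lambda)$. Since $R_L(\lambda)$ is defined as a bounded operator on $L^2(M, dV_g)$ and the volume form behaves like $x^{-n}\, dx\, dy\, dV_{h_0}$, only the $L^2$-admissible indicial root $\gamma^+(\lambda) = \frac{n-1}{2} + \zeta_0(\lambda)$ survives in the expansion at $\mathrm{lf}$. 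Consequently the polyhomogeneous expansion of the kernel there takes the form
\[
R_L(\lambda; x, y, \tilde{x}, \tilde{y}) \sim \sum_{j \geq 0} x^{\gamma^+(\lambda) + j}\, a_j(y, \tilde{x}, \tilde{y}),
\]
(plus possible logarithmic terms arising from integer coincidences of indicial roots), valid uniformly as $x \to 0$ with $\tilde{x} \geq c > 0$. The symmetric expansion at $\mathrm{rf}$ follows by applying the same analysis to the formal adjoint, or equivalently by symmetry of the construction.

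To obtain the stated estimate, I would take real parts: the leading exponent is $\mathrm{Re}\, \gamma^+(\lambda) = \frac{n-1}{2} + \mathrm{Re}\, \zeta_0(\lambda)$. By the definition of $\delta = \inf_{\mathrm{Re}\, \lambda \leq -B} \mathrm{Re}\, \zeta_0(\lambda)$, this exponent is at least $\frac{n-1}{2} + \delta$ uniformly on the prescribed half-plane. Combined with the boundedness of the coefficients $a_j$ in the relevant region, this yields the first bound; the second is proved identically after interchanging roles of $x$ and $\tilde{x}$.

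The principal obstacle is not the final algebraic step but the justification of the detailed polyhomogeneous structure of $R_L(\lambda)$, which is the technical content of \cite{MazzeoEdge}; I would simply cite it. A secondary subtlety is uniformity of the constant $C$ as $\lambda$ varies over the half-plane, which follows from holomorphic dependence of the parametrix construction on $\lambda$ together with the fact that $\gamma^+(\lambda)$ varies holomorphically off the $L^2$ spectrum and has real part bounded below by $\frac{n-1}{2} + \delta$ throughout the region.
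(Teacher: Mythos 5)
Your proposal is correct and matches the paper's treatment: the paper does not prove this proposition but cites it directly as a structural theorem from \cite{MazzeoEdge}, and your sketch (polyhomogeneity of the lifted kernel on the $0$-stretched product, leading exponent at the left and right faces given by the $L^2$-admissible indicial root $\gamma^+(\lambda)=\frac{n-1}{2}+\zeta_0(\lambda)$, then bounding $\mathrm{Re}\,\zeta_0(\lambda)\geq\delta$ over the half-plane) is exactly the argument behind that citation. One caveat: your closing claim that $C$ can be taken uniform over the entire half-plane via holomorphy is more delicate than you suggest, since the edge-calculus construction does not by itself give bounds uniform as $|\lambda|\to\infty$ (this is precisely why the paper develops the semiclassical parametrix for its sectoriality estimate); for the use made of the proposition here, a constant depending on $\lambda$ suffices.
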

Actually, there is a considerably sharper structural theorem which also describes the precise behavior of $R_L$ as $x, \tilde{x} \to 0$, but
for the present purposes we do not need this. 

We now state and prove a basic result on the spectrum of $L$ acting on weighted H\"older spaces.  
\begin{theorem} \label{thm:ah} Let $(M^n,g)$ be an asymptotically hyperbolic space where $L$ is a generalized Laplacian 
acting on some tensor bundle over $M$.  For every $\delta > 0$, there exists $\omega = \omega(\delta) > 0$ so that if
\begin{align} \label{eqn:loc-interval}
\mu \in \left(\frac{n-1}{2} -\delta,\frac{n-1}{2} + \delta\right),
\end{align}
then $\res_{x^{\mu} \calC^{0,\alpha}}(L) \supset \{ \lambda: \Re \lambda \leq -\omega\}$.
\end{theorem}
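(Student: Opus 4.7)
The plan is to exploit the self-adjointness of $L$ and the boundary kernel estimates of Proposition~\ref{prop:AHresolventdecay}. Since $L=\nabla^*\nabla+\calK$ is symmetric on $\calC^\infty_0(M)$ and $g$ is complete, Chernoff's theorem gives a unique self-adjoint extension on $L^2(M,dV_g)$, so $\spec_{L^2}(L)\subset[-C_1,\infty)$ for some $C_1$. Given $\delta>0$, I choose $\omega>0$ large enough both that $\{\Re\lambda\leq-\omega\}\subset\res_{L^2}(L)$ and $\inf_{\Re\lambda\leq-\omega}\Re\zeta_0(\lambda)\geq\delta$; this is possible by the behavior of the indicial root recalled in the paragraph preceding Proposition~\ref{prop:AHresolventdecay}. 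For such $\lambda$, the $L^2$ resolvent $R_L(\lambda)$ exists, and the claim $\lambda\in\res_{x^\mu\calC^{0,\alpha}}(L)$ is equivalent to the boundedness on $\calC^{0,\alpha}(M,g)$ of convolution against the conjugated kernel
\[
K(z,\tilde z)\;:=\;x^{-\mu}R_L(\lambda;z,\tilde z)\tilde x^{\mu}
\]
with respect to the volume form $dV_g = \tilde x^{-n}\,dV_{\bar g}$.

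I split $M\times M$ into an interior region (both $x,\tilde x$ bounded below), two off-corner near-boundary regions (exactly one of $x,\tilde x$ small), and a corner region (both small). In the interior the kernel $R_L$ has only the standard Calder\'on--Zygmund singularity along the diagonal, which acts boundedly on local H\"older spaces via Schauder theory. In each off-corner region Proposition~\ref{prop:AHresolventdecay} provides $|R_L|\leq Cx^{(n-1)/2+\delta}$ or $|R_L|\leq C\tilde x^{(n-1)/2+\delta}$; after conjugation by $x^{-\mu}\tilde x^\mu$ and multiplication by the volume factor $\tilde x^{-n}$, the strict inequalities $(n-1)/2-\delta<\mu<(n-1)/2+\delta$ make $K\,dV_g$ integrable in $\tilde z$ uniformly in $z$ and the resulting $L^\infty$ bound on the output is finite. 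The corner region requires the sharper polyhomogeneous description of $R_L$ on the edge double space of \cite{MazzeoEdge}, whose leading behavior at the front face carries the same exponents $(n-1)/2+\delta$ in both $x$ and $\tilde x$, yielding the same integrability conclusion.

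The main technical obstacle is to promote the pointwise bound on $u:=R_L(\lambda)f$ to the full H\"older norm on $x^\mu\calC^{0,\alpha}$. To handle this I would use the uniform coordinate balls $B_{r_0}(p)$ guaranteed by the bounded-geometry hypothesis. Within any such ball the weight $x^{-\mu}$ varies only by a bounded multiplicative factor, so interior Schauder estimates applied to $(\lambda I-L)u=f$ yield
\[
\|x^{-\mu}u\|_{\calC^{2,\alpha}(B_{r_0/2}(p))}\leq C\bigl(\|x^{-\mu}f\|_{\calC^{0,\alpha}(B_{r_0}(p))}+\|x^{-\mu}u\|_{L^\infty(B_{r_0}(p))}\bigr)
\]
with $C$ independent of $p\in M$. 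The $L^\infty$ term is controlled by the global pointwise bound established in the previous paragraph, and taking the supremum over $p$ using the equivalence of global and local H\"older seminorms recorded in Subsection~\ref{subsec:funct-spcs} gives the global bound $\|x^{-\mu}u\|_{\calC^{0,\alpha}(M,g)}\leq C\|x^{-\mu}f\|_{\calC^{0,\alpha}(M,g)}$, verifying that $R_L(\lambda)$ is bounded on $x^\mu\calC^{0,\alpha}$ and completing the proof.
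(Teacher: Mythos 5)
Your argument is essentially the paper's own proof: choose $\omega$ so that the half-plane lies in the $L^2$ resolvent set and $\Re\zeta_0\geq\delta$, use the decay bounds of Proposition~\ref{prop:AHresolventdecay} (and the finer structure from \cite{MazzeoEdge} at the corner where both $x,\tilde x\to 0$) to show the conjugated kernel $(\tilde x/x)^\mu R_L$ is bounded on $\calC^0$ exactly when $\tfrac{n-1}{2}-\delta<\mu<\tfrac{n-1}{2}+\delta$, and then upgrade to $\calC^{0,\alpha}$ by elliptic estimates. Your only additions are to spell out the region decomposition and the uniform interior Schauder step on bounded-geometry balls, which is precisely what the paper summarizes as ``boundedness on H\"older spaces now follows readily from elliptic estimates,'' so the proposal is correct and matches the paper's route.
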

\begin{proof}
For simplicity we confine our discussion to the scalar case.

\

Given a choice of $\delta > 0$, by the defining remarks above concerning $\zeta_0$, we need only increase the value of $\omega$ so to ensure the interval of equation \eqref{eqn:loc-interval} is non-empty.  Then, by Proposition \ref{prop:AHresolventdecay}, the conjugated kernel \eqref{conjker} decays at least at the rate $\frac{n-1}{2} + \delta + \mu$ as $\tilde{x} \to 0$ 
and at least like $\frac{n-1}{2} + \delta - \mu$ as $x \to 0$.  We must determine whether
\[
(\tilde{x}/x)^\mu R_L(\lambda; x, y, \tilde{x}, \tilde{y}):  \calC^{0,\alpha}(M,g) \longrightarrow \calC^{0,\alpha}(M,g)
\]
is bounded, where this kernel acts by
\[
u(x,y) \longmapsto \int (\tilde{x}/x)^\mu R_L(\lambda; x, y, \tilde{x}, \tilde{y}) u(\tilde{x}, \tilde{y})\, \frac{d\tilde{x}d\tilde{y}}{\tilde{x}^n}.
\]
(The singular measure is uniformly equivalent to the $L^2$ measure for $g$.) Since $u$ does not decay, it is certainly necessary that the
product of these factors (including the singular Jacobian factor) is bounded by $\tilde{x}^{-1 + \epsilon}$ for some $\epsilon > 0$.
This implies the necessity of 
\[
\frac{n-1}{2} + \delta + \mu  - n > -1 \Leftrightarrow \mu > \frac{n-1}{2} - \delta.
\]
On the other hand, the rate of growth (or decay) of the output is determined by the behavior of this conjugated kernel in two regions: 
the first as both $x, \tilde{x} \to 0$ and the second as $x \to 0$, $\tilde{x} > 0$.   We refer to \cite[Theorem 3.27]{MazzeoEdge} for
the precise explanation.  The kernel is bounded in this first regime, and is bounded by $x^{ (n-1)/2 + \delta - \mu}$ in the second. 
This is then bounded if and only if 
\[
\mu < \frac{n-1}{2} + \delta.
\]
The case of equality must be omitted here because in that special case the solution might behave like $\log x$ as $x \to 0$. 

Altogether then, we have argued that
\[
(\tilde{x}/x)^\mu R_L(\lambda; x, y, \tilde{x}, \tilde{y}):  \calC^{0,0}(M,g) \longrightarrow \calC^{0,0}(M,g)
\]
is bounded if and only if
\begin{equation}
\frac{n-1}{2} - \delta < \mu < \frac{n-1}{2} + \delta.
\label{limitsmu0}
\end{equation}
Boundedness on H\"older spaces now follows readily from elliptic estimates.
\end{proof}

We have described the asymptotically hyperbolic case in some detail, but note that it is possible to prove similar results in the asymptotically conic, cylindrical,
complex hyperbolic and symmetric or homogeneous cases described above.  There is no such argument (to our knowledge) for
infinite covers of compact manifolds. 

This material was included to indicate that the spectral hypothesis of admissibility can be verified in different ways.
In Section \ref{scpc} we describe a different sort of parametrix construction which turns out to be sufficient for our purposes.
It works for arbitrary manifolds of bounded geometry, but only if $\lambda$ has sufficiently large negative real part.
In many ways, that parametrix construction is simpler than the one needed in the AH case, but is perhaps less familiar. 

\section{Sectoriality of admissible operators on H\"older spaces}
\label{sec:proofThm}
We now turn to the proof of the main sectoriality theorem, Theorem \ref{thm:main-A}.

The first key observation is that if $L$ is an elliptic differential operator of order $m$, then the sectoriality of $L$ is equivalent to a certain estimate for
the {\it semiclassical} resolvent of $L$, which means the following. Define $\varepsilon = |\lambda|^{-1/m}$ and $\zeta = \lambda/|\lambda|$.
We then rewrite the operator that appears in  Lemma \ref{lem:halfplane} characterizing sectoriality as
\[
\lambda (\lambda I - L)^{-1} = \frac{\lambda}{|\lambda|}  ( (\lambda/|\lambda|) I - |\lambda|^{-1} L)^{-1} = \zeta ( \zeta I - \varepsilon^m L)^{-1}.
\]
Disregarding the harmless unit prefactor $\zeta$, the operator $(\zeta I - \varepsilon^m L)^{-1}$ is called the semiclassical
resolvent of $L$.  Since $\mathrm{Re}\, \lambda \leq \omega$ and we may as well assume that $\omega < 0$,
we only need consider such $\zeta$ with $\mathrm{Re}\,\zeta < 0$.  Altogether then, an equivalent formulation of the sectoriality of $L$ is 
that 
\begin{equation}
(\zeta - \varepsilon^m L)^{-1}: \calC^{k,\alpha}(M,g) \longrightarrow \calC^{k,\alpha}(M,g)
\label{scformulation}
\end{equation}
exists and has norm which is uniformly bounded independently of $\vep \in (0, \vep_0)$, for some $\vep_0 >0$, and $\zeta$ with
$|\zeta| = 1$, $\mathrm{Re}\, \zeta \leq -c < 0$.   In other words, 
\begin{quotation}
{\bf sectoriality is equivalent to the uniform boundedness of
the {\it semiclassical} resolvent on regular, i.e., {\it non-semiclassical}, function spaces.}
\end{quotation}
Part of our assertion is that this resolvent exists as a bounded operator provided $\vep$ is sufficiently small, i.e., 
$\lambda I - L$ is invertible on $\calC^{k,\alpha}(M,g)$ when $\mathrm{Re}\, \lambda$ is sufficiently negative. 

As explained at the end of this section, the case $k > 0$ follows from the case $k=0$, so we assume that $X = \calC^{0,\alpha}$
until the final part of this section.

We prove this in a series of steps, outlined here and carried out in the rest of this section. In the first step, we show that $(\zeta I - \vep^m L)^{-1}$ exists as a bounded operator for 
each sufficiently small $\vep$ and for every $\zeta$ with $\mathrm{Re}\, \zeta < 0$, but with no claim about uniformity. 
Here it is irrelevant whether standard or semiclassical H\"older spaces (as defined in Section \ref{subsec:funct-spcs}) are used since they are equivalent for each fixed 
$\vep$. This is a `perturbative result', and follows from the existence of a semiclassical parametrix. We state this result carefully in
the next proposition, and for the reader's convenience sketch this parametrix construction using geometric microlocal analytic techniques in Section \ref{scpc}; this methodology is explained there.
Next we recall the `easy' semiclassical elliptic Schauder estimate  associated to any strongly elliptic semiclassical family 
$\zeta I - \vep^m L$. This is equivalent to the uniform boundedness of the inverse of this semiclassical operator between 
{\it semiclassical} H\"older spaces. The main step of the whole proof is to upgrade this to an estimate between standard 
H\"older spaces $\calC^{0,\alpha}$ which is independent of $\vep > 0$. This is done by establishing uniform bounds for 
this semiclassical resolvent as a map $\calC^j \to \calC^j$ for $j = 0, 1$, and applying interpolation.

\begin{prop} Let $L$ be an admissible  operator of order $m$ on a manifold $(M,g)$ of  bounded geometry.  
Then the unbounded operator $\zeta I - \vep^m L: \calC^{0,\alpha}_\vep(M,g) \to \calC^{0,\alpha}_\vep(M,g)$ has a bounded inverse 
for each sufficiently small $\vep > 0$ and for each $\zeta$ with $|\zeta| = 1$ and $\mathrm{Re}\, \zeta < 0$.
\label{scparprop}
\end{prop}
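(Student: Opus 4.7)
The plan is to build a semiclassical parametrix for $\zeta I - \vep^m L$ directly from the symbolic inverse of its principal symbol, and then correct the resulting $\mathcal{O}(\vep)$ error by a Neumann series. The admissibility hypothesis in Definition \ref{admop2} is precisely tailored for this: for any $\zeta$ with $\mathrm{Re}\,\zeta < 0$ we have $\zeta \notin \Gamma$, so the fibrewise inverse $b_0(x,\xi;\zeta) := (\zeta - \sigma_m(L)(x,\xi))^{-1}$ is a bounded endomorphism with $\|b_0\| \leq C(\zeta)(1+|\xi|)^{-m}$ uniformly in $x \in M$. Differentiating this identity and using the $\calC^{\ell,\alpha'}$-uniformity of the coefficients of $L$ in normal coordinates shows that $b_0$ sits, uniformly in $x$, in the semiclassical symbol class of order $-m$.

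To quantize $b_0$ globally, I would fix a uniformly locally finite cover of $M$ by normal balls $B_{r_0/2}(q_\alpha)$ with subordinate partition of unity $\{\chi_\alpha\}$ and enlarged plateau cutoffs $\tilde\chi_\alpha$ satisfying $\tilde\chi_\alpha \equiv 1$ on the support of $\chi_\alpha$, all with uniform derivative bounds in their coordinate charts. In each chart the semiclassical Weyl quantization of $b_0$ is a standard operator on $\RR^n$, and transplantation to $M$ produces the candidate parametrix
\[
Q_\vep := \sum_\alpha \tilde\chi_\alpha \, \mathrm{Op}_\vep^{(\alpha)}(b_0) \, \chi_\alpha.
\]
Finite overlap of the cover together with uniform symbol bounds in each chart show that $Q_\vep$ is bounded on $\calC^{0,\alpha}_\vep(M,g)$ with norm independent of $\vep$.

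A composition calculation in the semiclassical pseudodifferential calculus then yields
\[
(\zeta I - \vep^m L)\,Q_\vep = I - E_\vep,
\]
where $E_\vep$ decomposes into a chartwise subprincipal contribution, which is of semiclassical order $-1$ and hence $\mathcal{O}(\vep)$ in operator norm, plus commutator errors $[\vep^m L,\tilde\chi_\alpha]$ supported in the transition annulus where $\tilde\chi_\alpha - \chi_\alpha$ varies, which are $\mathcal{O}(\vep^\infty)$ by the usual non-stationary phase argument. Bounded geometry makes these estimates uniform in $\alpha$, so $\|E_\vep\|_{\calL(\calC^{0,\alpha}_\vep)} \leq C\vep$. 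Choosing $\vep_0$ with $C\vep_0 < 1/2$ makes $I - E_\vep$ invertible by Neumann series for $\vep \in (0,\vep_0]$, and $Q_\vep(I - E_\vep)^{-1}$ is then a bounded right inverse to $\zeta I - \vep^m L$. A mirror-image construction of a left parametrix, or a duality argument using admissibility of $L^*$, shows the inverse is two-sided.

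The principal obstacle lies not in any individual step but in setting up the underlying semiclassical pseudodifferential calculus on an \emph{arbitrary} noncompact manifold of bounded geometry so that all symbol, composition, and boundedness estimates hold uniformly both in the chart index $\alpha$ and as $\vep \to 0$. This is the role of the geometric microlocal construction developed in Section \ref{scpc}, which manages the uniformity issues that are invisible in the compact or model-space settings.
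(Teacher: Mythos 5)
Your proposal is correct in outline and shares the paper's overall strategy for Proposition \ref{scparprop} --- manufacture a uniform semiclassical parametrix whose error has small operator norm as $\vep \to 0$, then invert by a Neumann series --- but the construction of the parametrix is genuinely different. The paper works on the blown-up semiclassical double space $M^2_{\scl}$: it builds the model Green's function $\olH_\zeta$ for the constant-coefficient operator $\zeta I - \sigma_m^{\scl}(L)(\del_w)$ by Fourier transform, obtains exponential decay by contour deformation (this is exactly where $\mathrm{Re}\,\zeta<0$ and strong ellipticity enter), solves the full formal series at the front face and Borel-sums, so that the residual operators lie in $\Psi^{-\infty,\infty}_{\scl-\unif}$, i.e.\ are smoothing, supported near the diagonal, and vanish to all orders in $\vep$; their $\calC^{0,\alpha}$ norms then tend to $0$ and the Neumann series closes the argument. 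You instead quantize the fibrewise inverse $b_0=(\zeta-\sigma_m(L))^{-1}$ chart by chart and patch with a uniformly locally finite partition of unity, settling for a one-step parametrix with error only $\mathcal{O}(\vep)$. For this proposition that is enough, and it is leaner: no full expansion, no Borel summation. What the paper's construction buys is the sharper structural description of the parametrix and the residual kernels, which is what makes the Schwartz-kernel picture usable elsewhere in the paper; what your route buys is brevity, at the cost of concentrating all of the real work in the uniform chartwise calculus --- composition, off-diagonal kernel decay, and above all the uniform boundedness of semiclassical operators on $\calC^{0,\alpha}_\vep$, which in the paper is a separate result (Proposition \ref{bddprop}) and does not follow from ``finite overlap plus symbol bounds'' alone. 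As you yourself note, this is precisely the content of Section \ref{scpc}, so your write-up is a reorganization of the same analytic input rather than a shortcut around it.

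Two caveats. First, the cutoff commutator terms are not $\mathcal{O}(\vep^\infty)$ as operators by themselves: $[\vep^m L,\tilde\chi_\alpha]$ is only $\mathcal{O}(\vep)$, and the rapid vanishing comes from composing it with $\mathrm{Op}^{(\alpha)}_\vep(b_0)\,\chi_\alpha$, whose kernel is rapidly (in fact exponentially) small between the supports of $d\tilde\chi_\alpha$ and $\chi_\alpha$, which are separated by a fixed $g$-distance, i.e.\ by distance of order $1/\vep$ in the rescaled variable; phrase the estimate that way. Second, both your argument and the paper's double-space construction tacitly assume smooth data, whereas admissibility only guarantees finitely many uniformly bounded derivatives of the coefficients; the paper treats the finitely regular case separately (Proposition \ref{finitereg}) by mollification together with a rescaling/contradiction argument, and your proof should either do the same or justify directly that the single power of $\vep$ you need uses only one coefficient derivative (available under the standing hypotheses) together with a H\"older boundedness statement for quantizations of symbols with merely H\"older dependence on the base variable.
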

\begin{proof}
This proof uses the machinery of microlocal analysis extensively. A detailed introduction to these methods is provided in Section \ref{scpc}. As is carefully defined and explained in that section, there exists a parametrix $G_\vep$ for $\zeta I - \vep^m L$ which is an element of order $-m$ in 
the semiclassical uniform pseudodifferential calculus $\Psi^{*,*}_{\mathrm{sc-unif}}(M,g)$ (see Definition \ref{def:sc-pseud}). Thus for each $\vep$, $G_\vep$ is an approximate
inverse for $\zeta I - \vep^m L$; its discrepancy from being an exact inverse is captured by the `residual operators'
$Q_{j,\vep} \in \Psi^{-\infty,\infty}_{\mathrm{sc-unif}}(M,g)$, $j = 1, 2$, via the identities
\[
(\zeta I - \vep^m L) G_\vep = I - Q_{1,\vep}, \quad G_\vep (\zeta I - \vep^m L) = I - Q_{2,\vep}.
\]
Each $Q_{j,\vep}$ is a smoothing operator (this is the meaning of the first superscript $-\infty$) with Schwartz kernel supported in 
some fixed neighborhood of the diagonal $\{(z,\tilde{z}): \mathrm{dist}_g(z, \tilde{z}) \leq C\}$, and vanishing to all orders
as $\vep \searrow 0$ (which is the meaning of the second superscript, $+\infty$). 

In more detail, $G_\vep$ and the $Q_{j,\vep}$ are one-parameter families of operators, with Schwartz kernels $G(\vep, z, \tilde{z})$ and
$Q_{j}(\vep, z, \tilde{z})$, $z, \tilde{z} \in M$. For each $\vep > 0$, $G(\vep, z, \tilde{z})$ is an ordinary pseudodifferential operator 
of order $-m$ which is a parametrix for $\zeta I - \vep^m L$, and the $Q_j$ are the smoothing error terms.  These Schwartz kernels
vary smoothly in $\vep$ for $\vep > 0$.  The important new feature is their behavior as $\vep \to 0$.  First, if $z \neq \tilde{z}$, then
$G(\vep, z, \tilde{z})$ and $Q_j(\vep, z, \tilde{z})$ decay faster than $\vep^N$ for any $N$. This convergence is uniform in any region where
$\mathrm{dist}_g(z, \tilde{z}) \geq c'' > 0$ for any $c'' > 0$.   The behaviour of $G(\vep, \cdot, \cdot)$ near the diagonal as $\vep \to 0$ 
requires a bit more work to describe; this is done in Section \ref{scpc}. On the other hand, for $j = 1, 2$, $Q_{j}(\vep, z, \tilde{z}) 
\in \calC^\infty([0,\vep_0) \times M^2)$, and these kernels decay rapidly along with all derivatives as $\vep \to 0$, uniformly on $M \times M$.
This construction works assuming that $\mathrm{Re}\, \zeta < 0$, but the rate of decay (which is actually exponential) diminishes
as $\mathrm{Re}\, \zeta \to 0$.

It is straightforward to deduce from this structure that $||Q_{j,\vep}||_{\mathcal L(\calC^{0,\alpha})} \to 0$ as $\vep \to 0$.
Hence both $(I - Q_{1,\vep})^{-1}$ and $(I - Q_{2,\vep})^{-1}$ exist as bounded operators on $\calC^{0,\alpha}$ for any fixed sufficiently small
$\vep > 0$. Thus we can write
\[
(\zeta I - \vep^m L)^{-1} = G_\vep (I - Q_{1,\vep})^{-1} = (I - Q_{2,\vep})^{-1} G_\vep ; 
\]
this proves that $(\lambda I - L)^{-1}$ exists for any $\lambda$ with $\mathrm{Re}\, \lambda$ is sufficiently negative. 
\end{proof}

In the next step we continue in this same semiclassical vein: 
\begin{prop}[Semiclassical elliptic estimate] \label{lemma:sc} There exists a constant $C > 0$ such that for all $\vep \in (0,1)$,
$u \in \scholder^{2,\alpha}$ and $\zeta$ with $|\zeta| = 1$ and $\mathrm{Re}\, \zeta \leq c' < 0$, 
\begin{equation} \|u\|_{m,\alpha,\vep} \leq C \left( \| (\zeta I - \vep^m L) u\|_{0,\alpha,\vep} + \sup_M |u| \right). 
\label{scholderest}
\end{equation}
\end{prop}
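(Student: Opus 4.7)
My plan is to reduce this to the classical interior Schauder estimate for a strongly elliptic operator with uniform coefficient control, combined with a standard localization argument on a manifold of bounded geometry. The central observation, already recorded in the paper's definition of the semiclassical spaces, is that $\calC^{k,\alpha}_\vep(M,g) = \calC^{k,\alpha}(M, g_\vep)$ where $g_\vep = \vep^{-2} g$. Moreover $(M, g_\vep)$ has bounded geometry of order $\ell+\alpha'$ with constants uniform in $\vep \in (0,1]$ (the $g_\vep$-injectivity radius is $\vep^{-1}$ times the $g$-injectivity radius, and the $g_\vep$-curvature bounds only improve as $\vep \searrow 0$). The operator $\vep^m L$ is admissible on $(M, g_\vep)$ with uniform constants: its leading part, being of the form $(\nabla^*\nabla)^{m/2}$ plus constant-coefficient rearrangements, rescales as $\vep^m (\nabla^*_g \nabla)^{m/2} = (\nabla^*_{g_\vep}\nabla)^{m/2}$, while each lower-order term of order $j \leq m-1$ picks up a factor $\vep^{m-j}$ and has coefficients that remain uniformly $\calC^{\ell,\alpha'}$-bounded in $g_\vep$-normal coordinates. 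The principal symbol of $\vep^m L$, expressed in these rescaled coordinates, satisfies the same uniform spectral cone condition as $\sigma_m(L)$, so under the hypothesis $\mathrm{Re}\,\zeta \leq c'<0$ the number $\zeta$ stays at uniform distance from the sector containing $\sigma_m(\vep^m L)(x,\xi)$.

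With this reinterpretation, the estimate to prove is precisely the standard inequality
\[
\|u\|_{\calC^{m,\alpha}(M,g_\vep)} \leq C\bigl(\|(\zeta I - \vep^m L)u\|_{\calC^{0,\alpha}(M,g_\vep)} + \sup_M|u|\bigr)
\]
for an admissible operator on a manifold of bounded geometry, with $C$ independent of $\vep$. I would verify it by localization: for each $q \in M$, pull back via the $g_\vep$-exponential map based at $q$ to identify $B_{r_0}(q) \subset (M,g_\vep)$ with a Euclidean ball of radius $r_0$ carrying a metric whose components and inverse are bounded in $\calC^{\ell,\alpha'}$ uniformly in $q$ and $\vep$. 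The pulled-back operator $\zeta I - \vep^m L$ is strongly elliptic, uniformly in $(q,\vep)$, with coefficients bounded in $\calC^{\ell,\alpha'}$. The classical higher-order interior Schauder estimate (Agmon--Douglis--Nirenberg) then yields
\[
\|u\|_{\calC^{m,\alpha}(B_{r_0/2}(q))} \leq C\bigl(\|(\zeta I - \vep^m L)u\|_{\calC^{0,\alpha}(B_{r_0}(q))} + \|u\|_{\calC^0(B_{r_0}(q))}\bigr),
\]
with $C$ independent of $q$ and $\vep$ because it depends only on the ellipticity constant, the coefficient bounds, and the opening of the sector containing the symbol, all of which are uniform.

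To globalize, I would appeal to Remark \ref{scnormballs}: the $\calC^{m,\alpha}(M,g_\vep)$ norm is equivalent to the supremum of the local $\calC^{m,\alpha}$ norms over all $g_\vep$-balls of radius $r_0$ (equivalently, $g$-balls of radius $\vep r_0$, which is the semiclassical convention). Taking the supremum of the local estimate over $q$ and noting that the right-hand side is controlled by the corresponding global semiclassical quantities gives \eqref{scholderest}.

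The main point requiring care is the uniformity in $\vep$ of every constant appearing in the localization. This reduces to checking two things: that the $g_\vep$-normal coordinate charts on $(M, g_\vep)$ are uniformly good (immediate from the scale-invariant nature of bounded geometry under constant rescaling) and that admissibility condition (ii) of Definition \ref{admop} for $L$ transfers to a uniform spectral-cone condition for $\sigma_m(\vep^m L)$ (immediate from the homogeneity of the principal symbol). Given these, the interior Schauder estimate and its localization go through without modification, and the semiclassical form drops out.
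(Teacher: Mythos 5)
Your proposal is correct and follows essentially the same route as the paper: identify $\calC^{k,\alpha}_\vep(M,g)$ with $\calC^{k,\alpha}(M,g_\vep)$ for $g_\vep=\vep^{-2}g$, check that bounded geometry and strong ellipticity (with uniform sector/coefficient control) persist under the rescaling, apply the classical interior Schauder estimate on uniformly controlled coordinate balls, and take the supremum over balls to globalize. The only cosmetic difference is that the paper works in $g$-normal coordinates rescaled by $w=z/\vep$ rather than directly in $g_\vep$-normal charts, which amounts to the same localization.
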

\begin{proof} 
As already hinted in the definition given earlier of semiclassical H\"older spaces, we show that \eqref{scholderest} is simply the
`standard' Schauder estimate relative to the metric $g_{\vep} = \vep^{-2} g$.  

If $z$ is a normal coordinate for $g$ in a ball $B(r_0)$ of radius $r_0$ about any point, then $w = z/\vep$ is a 
normal coordinate for the metric $g_\vep = \vep^{-2}g$ on a ball of radius $r_0/\vep$. Indeed, 
\[
g = g_{ab}(z) dz^a dz^b= (\delta_{ab} + E_{ab}(z)) dz^a dz^b, \qquad |E|_g = \calO(|z|^2),
\]
hence
\[
g_{\vep}= (\vep^{-2} \delta_{ab} + \vep^{-2} E_{ab}(\vep w)) dz^a dz^b = (\delta_{ab} + E_{ab}(\vep w)) dw^a dw^b.
\]
Using the hypothesis of bounded geometry, this shows that the coefficients and derivatives of $g_{\vep}$ are uniformly controlled 
in the rescaled normal coordinates. Denoting this dilation operator on a given ball by $S_\vep$, then the admissibility hypothesis
shows that the rescaled operator $\vep^m S_\vep^*L$ is strongly elliptic on each such ball.

The standard local elliptic estimate on any geodesic ball $B$ for $g_{\vep}$ states that if $B'$ is the ball of half the radius
and same center, then for all $u \in \calC^{m,\alpha}(g_{\vep})$, 
\[ 
\| u \|_{B', m, \alpha, g_{\vep}} \leq C \left( \|(\zeta I - \vep^m L) u\|_{B, 0, \alpha, g_{\vep}} + \sup_B |u| \right).
\]
Taking the supremum of the right hand side over all balls $B$ of fixed radius (provided by bounded geometry),
and then taking the supremum over the corresponding balls $B'$ on the left yields the global estimate.  Crucially, 
the  constant $C$ is independent of $\vep \leq 1$.

The proof is now completed by observing that the usual H\"older norm for $\calC^{m,\alpha}(g_{\vep})$ is precisely the same as
the semiclassical H\"older norm for $\calC^{m,\alpha}_\vep(g).$
\end{proof}

We now establish the $\calC^0$ version of the sectoriality estimate. 
\begin{prop}
There is a constant $C > 0$ such that
\[
||u||_{0} \leq C ||(\zeta I - \vep^m L)u||_{0}
\]
for all unit $\zeta$ with $\mathrm{Re}\, \zeta \leq c' < 0$, $\vep \in (0,1]$ and $u \in \calC^{m,\alpha}(M,g)$. 
\label{prop3.3}
\end{prop}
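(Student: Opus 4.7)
The plan is a proof by contradiction combined with a semiclassical blowup argument. Suppose the estimate fails: there exist sequences $\vep_i \in (0,1]$, unit $\zeta_i$ with $\mathrm{Re}\,\zeta_i \leq c' < 0$, and $u_i \in \calC^{m,\alpha}(M,g)$ with $\|u_i\|_0 = 1$ but $\|f_i\|_0 \to 0$, where $f_i := (\zeta_i I - \vep_i^m L) u_i$. Choose base points $p_i \in M$ with $|u_i(p_i)| \geq 1/2$, and pass to subsequences so that $\zeta_i \to \zeta_\infty$ (still satisfying $\mathrm{Re}\,\zeta_\infty \leq c'$) and $\vep_i \to \vep_\infty \in [0,1]$. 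I would then split according to whether $\vep_\infty > 0$ or $\vep_\infty = 0$, extracting pointed limits at different scales.

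If $\vep_\infty > 0$, use the pointed limit construction of Section~\ref{sec:geo-bkgd}: along a subsequence $(M, g, p_i) \to (M_\infty, g_\infty, p_\infty)$ in $\calC^{\ell,\alpha''}$, and $L \to L_\infty$ with $L_\infty$ admissible on $M_\infty$ by Lemma~\ref{adm-persists}. Local interior $L^p$ estimates (with $p > n$) together with Sobolev embedding convert the $\calC^0$ bounds on $u_i$ and $f_i$ into $\calC^{m-1,\gamma}_{\mathrm{loc}}$ compactness for $\{u_i\}$. A subsequential limit $u_\infty$ therefore satisfies $(\zeta_\infty I - \vep_\infty^m L_\infty) u_\infty = 0$, is bounded with $|u_\infty(p_\infty)| \geq 1/2$, and by elliptic bootstrapping lies in the little H\"older space $\calC^{m,\alpha}(M_\infty,g_\infty)$. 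Applying Proposition~\ref{scparprop} to $L_\infty$ on $M_\infty$ (which has the same bounded geometry constants as $M$) produces $u_\infty = 0$, a contradiction.

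If $\vep_\infty = 0$, rescale. On the normal coordinate ball $B^g_{r_0}(p_i)$ introduce $w = z/\vep_i$, lying in a Euclidean ball of radius $r_0/\vep_i \to \infty$, and set $\tilde u_i(w) = u_i(\exp_{p_i}(\vep_i w))$, $\tilde f_i(w) = f_i(\exp_{p_i}(\vep_i w))$. In these rescaled coordinates the pulled-back operators take the form $\sum_{|\beta|\leq m} \vep_i^{m-|\beta|} a_\beta(\vep_i w) \partial_w^\beta$; lower order terms are killed by powers of $\vep_i$, while the principal part converges to the constant-coefficient operator $\tilde L_\infty$ with symbol $\sigma_m(L)(p_\infty, \xi)$. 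Uniform bounded geometry of the rescaled metrics $g_{\vep_i}$ makes the same $L^p$--Sobolev extraction available on every compact subset of $\RR^n$, yielding $\tilde u_i \to \tilde u_\infty \in L^\infty(\RR^n)$ with $|\tilde u_\infty(0)| \geq 1/2$ and $(\zeta_\infty I - \tilde L_\infty)\tilde u_\infty = 0$ on $\RR^n$. Since $\tilde u_\infty$ is bounded, it is tempered, and the Fourier transform gives $(\zeta_\infty - \sigma_m(L)(p_\infty,\xi)) \widehat{\tilde u_\infty} = 0$ in $\mathcal S'(\RR^n)$. Admissibility forces this symbol factor to be nonzero for every real $\xi$, so $\widehat{\tilde u_\infty}$ is supported at the origin, hence $\tilde u_\infty$ is a polynomial. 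Boundedness makes it constant, and $\zeta_\infty \neq 0$ then forces $\tilde u_\infty \equiv 0$, contradicting $|\tilde u_\infty(0)| \geq 1/2$.

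The main obstacle is that one only has a $\calC^0$ control on $f_i$, not $\calC^\alpha$, so a direct Schauder compactness argument is unavailable; the $L^p$/Sobolev route is what turns this into enough regularity to extract a classical limit in $\calC^{m-1,\gamma}_\mathrm{loc}$. A related subtlety is that in the $\vep_\infty = 0$ case the rescaling must be done in the pulled-back chart at $p_i$ rather than on $M$ itself, since the injectivity radius caps any coordinate ball in $g$; it is precisely the dilation by $\vep_i^{-1}$ that produces the exhausting Euclidean domain on which the constant-coefficient Fourier/Liouville argument is valid.
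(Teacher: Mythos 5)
Your overall strategy is the same as the paper's: argue by contradiction, normalize $\|u_i\|_0=1$, pick base points where $|u_i|\geq 1/2$, and split into the two regimes $\vep_\infty>0$ (pointed Gromov--Hausdorff limit, limiting admissible operator $L_\infty$) and $\vep_\infty=0$ (rescaling $w=z/\vep_i$, convergence to a constant-coefficient strongly elliptic operator, Fourier/Liouville argument on $\RR^n$). Your $\vep_\infty=0$ branch is essentially identical to the paper's, and your use of interior $L^p$/Sobolev estimates to extract the limit from merely $\calC^0$ control of $f_i$ is a legitimate (in fact more carefully justified) substitute for the compactness step the paper passes over quickly.

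The gap is in how you close the $\vep_\infty>0$ branch. You conclude $u_\infty=0$ by ``applying Proposition~\ref{scparprop} to $L_\infty$ on $M_\infty$,'' but that proposition only asserts invertibility of $\zeta I-\vep^m L_\infty$ for $\vep$ \emph{sufficiently small}, below an unspecified threshold $\vep_0$ coming from the smallness of the parametrix error terms. In this branch $\vep_\infty$ is bounded away from $0$ and may be as large as $1$ (take $\vep_i\equiv 1$), and nothing in your argument — nor in the bounded-geometry constants you cite — forces $\vep_\infty<\vep_0$. Indeed, for $\vep$ of order one and a single unit $\zeta$ the injectivity you need is not a consequence of admissibility alone (an admissible operator can have $\calC^{0,\alpha}$ spectrum at a point $\vep_\infty^{-m}\zeta_\infty$ of the left half-plane of modulus comparable to one), so this step cannot be repaired by a compactness or uniformity remark. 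The paper closes this case differently: it invokes the standing fact that $\lambda_*=\vep_*^{-m}\zeta_*$ lies in the resolvent set of $L$ \emph{on $M$} (supplied by Proposition~\ref{scparprop} in the small-$\vep$ regime and taken as the operative hypothesis otherwise), and then transplants the limiting solution $v_\infty$ from $M_\infty$ back to $M$ by the cutoff argument of Proposition~\ref{limspecrel}, contradicting invertibility of $\lambda_* I-L$ on $M$; equivalently, one may combine that resolvent-set input with Proposition~\ref{limspecrel} to conclude that $\zeta_* I-\vep_*^m L_\infty$ has no bounded kernel on $M_\infty$. Your proof needs one of these inputs (or an explicit restriction of the statement to $\vep\in(0,\vep_0]$); as written, the appeal to Proposition~\ref{scparprop} on the limit space is exactly the point where the argument fails, and precisely in the regime where no rescaling degenerates.
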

\begin{proof} 
If this assertion were false, there would exist sequences $\zeta_j$, $\vep_j $ and $u_j \in \calC^{m,\alpha}$ such that 
\begin{equation} 
\| u_j \|_{0} > j \| ( \zeta_j - \vep_j^m L) u_j \|_0
\end{equation}
Replace $u_j$ by $v_j = u_j / \| u_j \|_0$ and set $f_j =  ( \zeta_j - \vep_j^m L ) v_j$, so that $\|v_j\|_0=1$ 
and $\|f_j\|_0  \leq 1/j \to 0$.   Passing to a subsequence, we assume that $\zeta_j \to \zeta_*$. 

Next, choose a point $p_j \in M$ where $|v_j(p_j)| > 1/2$.  By virtue of the bounded geometry of $(M,g)$, the
restriction of the metric $g$ and the coefficients of the operator $L$ (expressed in normal coordinates) 
on the balls $B_j := B_{r_0}(p_j)$ are bounded in $\calC^{m,\alpha}$, independently of $j$. 

There are two cases to consider.  First suppose that $\vep_j \to \vep_* > 0$.  If the $p_j$ remain
in a compact set of $M$, then it is straightforward to extract a limit $v \in \calC^{0,\alpha}$ of the sequence $v_j$ which 
is not identically zero, and which satisfies $(\zeta_*  I - \vep_*^m L) v = 0$.  This is impossible given our hypothesis
that $\vep_*^{-m}\zeta_*$ does not lie in the spectrum.

Now turn to the case where the sequence $p_j$ diverges.  
Following the discussion in 'Pointed limits of manifolds of bounded geometry' in Section \ref{sec:geo-bkgd}, choose a subsequence so that $(M,g, p_j)$ converges to a limiting space 
$(M_\infty, g_\infty, p_\infty)$ as pointed Riemannian spaces in the $\calC^{\ell,\alpha'}$ topology, and so that $L$ converges
in this construction to an operator $L_\infty$ on $M_\infty$.  By Lemma~\ref{adm-persists}, $L_\infty$ is admissible.   
Since $1/2 \leq |v_j(p_j)| \leq 1 = \sup |v_j|$, 
it follows as before that some subsequence of these functions converges to a nontrivial limiting function $v_\infty$ on $M_\infty$
which satisfies $(\zeta_* I - \vep_*^m L_\infty) v_\infty = 0$.  Clearly $|v_\infty| \leq 1$ everywhere and by local 
Schauder estimates, there exists a constant $C$ such that $1 \leq ||v_\infty||_{0,\alpha} \leq C$. 

We may now employ exactly the same transplantation argument as in the proof of Lemma \ref{adm-persists} to show that
the existence of this solution $v_\infty$ contradicts the fact that $\vep_*^{-2} \zeta_*$ does not lie
in the spectrum of $L$.  The only minor modification from the proof is that we now write
\begin{multline*}
( \zeta_j I - \vep_j^m L) ( \chi_i v_{\infty} ) := h_i =  \chi_i ( \zeta_* I -\vep_*^m L_{\infty}) v_{\infty} + \\
(\zeta_j  - \zeta_*) \chi_i v_{\infty} - \chi_i (\vep_j^m - \vep_*^m) L_{\infty} v_{\infty} - 
\chi_i \vep_j^m (L - L_{\infty}) v_{\infty} -\vep_j^m [ L, \chi_i ] v_{\infty},
\end{multline*}
but this tends to $0$ in norm, as before.     Thus we reach a contradiction in this case too.

We have now reduced to the case where $\vep_j \to 0$. 
If $z$ is a normal coordinate in $B_j$, then as discussed above, $w = z/\vep_j$ is a normal coordinate for the metric 
$g_j = \vep_j^{-2}g$ on a ball of radius $r_0/\vep_j$. Indeed, 
\[
g_j= (\delta_{ab} + E_{ab}(\vep_j w)) dw^a dw^b,\ \ E_{ab}(\vep_j w) = \mathcal O( \vep_j^2C^2) \ \ \mbox{for} \ |w| \leq C.
\]
Thus $g_j$ converges uniformly to the Euclidean metric on any compact set of $\RR^n$. 

A very similar computation shows that if $L_j$ denotes the operator $L$ expressed in these rescaled coordinates, then
\[
\vep_j^m L_j \to  L_E,
\]
a constant coefficient operator on $\RR^n$; this limit is uniform on any compact subset of $\RR^n$.  
Observe that any term in $L$ involving a derivative of order less than $m$ tends to $0$ in this limit. 
In fact, using multi-index notation, 
\[
\mbox{if}\ L = \sum_{|J| \leq m} a_J(z) \del_z^J, \ \mbox{then}\ L_E = \sum_{|J| = m} a_J(0) \del_w^J.
\]
(In the language developed in \S \ref{scpc} below, $L_E$ is simply the constant coefficient operator associated to the semiclassical
symbol $\sigma_m^\scl(L)(z,\xi)$ of $L$ at $z=0$; we refer to that section for more on this.) 

Now, passing to a subsequence, we may assume that $\zeta_j \to \zeta_*$ and that $v_j \to v$ in
$\calC^m_{\mathrm{loc}}(\RR^n)$. Clearly $|v| \leq 1$; furthermore, $1/2 \leq |v_j(0)| \leq 1$, so 
the function $v$ is nontrivial. It also must satisfy 
\[
(\zeta_* - L_E) v = 0
\]
on all of $\RR^n$.   The existence of such a bounded solution is easily ruled out by Fourier analysis.  Indeed, if we regard
$v$ as a tempered distribution, then taking the Fourier transform transforms this equation to
\[
(\zeta_* - \sigma_m(L)(0,\xi)) \hat{v}(\xi) = 0.
\]
Using our definition of strong ellipticity, and the fact that $\mathrm{Re}\, \zeta_* \leq 0$, $\zeta_* \neq 0$, the factor
$(\zeta_* - \sigma_m(L)(0,\xi))$ is invertible for all $\xi \in \RR^n$, hence $\hat{v} = 0$, so $v = 0$ as asserted.
We have arrived at a final contradiction, and have thus established the $\calC^0$ bound. 
\end{proof}

The (nearly) final step in the proof of Theorem 1 is to establish the corresponding $\calC^1$ sectoriality estimate, by reducing it to the $\calC^0$ 
estimate. 
\begin{prop} \label{prop:second-resolvent-estimate}
For any fixed $c' < 0$ sufficiently small, there exists a constant $C$ such that for all unit $\zeta$ with $\mathrm{Re}\,~\zeta~ \leq~c'~<~0$
and $\vep \in (0,1]$,
\begin{equation} 
\| u \|_{1} \leq C \| (\zeta - \vep^m L) u \|_{1}
\end{equation}for all $u \in \calC^{m+1,\alpha}(M,g)$. 
\end{prop}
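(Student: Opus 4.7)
The plan is to reduce the $\calC^1$ estimate to the $\calC^0$ bound of Proposition \ref{prop3.3} applied to $\nabla u$, using the semiclassical Schauder estimate of Proposition \ref{lemma:sc} to absorb the commutator error. Write $f = (\zeta I - \vep^m L) u$; since $u \in \calC^{m+1,\alpha}$ we have $\nabla u \in \calC^{m,\alpha}$, so all subsequent operations are legitimate. First, Proposition \ref{prop3.3} applied directly gives $\sup|u| \leq C \|f\|_0 \leq C\|f\|_1$, so the task reduces to bounding $\sup|\nabla u|$.

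To this end, I would covariantly differentiate the equation. Let $\widetilde{L}$ denote the natural extension of $L$ to sections of $E \otimes T^*M$; then a direct computation gives
\[ (\zeta I - \vep^m \widetilde{L})\, \nabla u \; = \; \nabla f + \vep^m B u, \qquad B := [\nabla, L]. \]
The commutator $B$ is a differential operator of order at most $m$ (indeed of order $m-1$ when $L$ has leading part $(\nabla^*\nabla)^{m/2}$, by repeated use of the Ricci identity), whose coefficients involve the curvature tensor of $g$ and its covariant derivatives up to appropriate order. Bounded geometry of order $\ell + \alpha' > m + 1 + \alpha$ makes these coefficients uniformly bounded, so $\|Bu\|_0 \leq C \sum_{j=0}^m \sup|\nabla^j u|$.

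Applying Proposition \ref{prop3.3} to $\nabla u$ with the operator $\widetilde{L}$ then yields
\[ \sup|\nabla u| \leq C \|\nabla f + \vep^m B u\|_0 \leq C \|f\|_1 + C \vep^m \sum_{j=0}^m \sup|\nabla^j u|. \]
The remaining term is absorbed via Proposition \ref{lemma:sc}: for $0 \leq j \leq m$,
\[ \vep^j \sup|\nabla^j u| \leq \|u\|_{m,\alpha,\vep} \leq C \bigl( \|f\|_{0,\alpha,\vep} + \sup|u| \bigr). \]
Multiplying by $\vep^{m-j} \leq 1$ and using $\|f\|_{0,\alpha,\vep} \leq \|f\|_{0,\alpha} \leq C\|f\|_1$ (the latter being the elementary bound $[f]_\alpha \leq \sup|\nabla f| + 2\sup|f|$ obtained from the mean value theorem on distances $\leq 1$ and the sup bound on larger scales) we conclude $\vep^m \sup|\nabla^j u| \leq C(\|f\|_1 + \sup|u|) \leq C\|f\|_1$. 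Substituting back gives $\sup|\nabla u| \leq C\|f\|_1$, which combined with the bound on $\sup|u|$ delivers $\|u\|_1 \leq C\|f\|_1$.

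The most delicate point is verifying that $\widetilde{L}$ is again admissible on $E \otimes T^*M$, so that Proposition \ref{prop3.3} can legitimately be invoked for $\nabla u$. For the generalized Laplacians $(\nabla^*\nabla)^{m/2} + \mathcal R$ of \S \ref{subsec:geomlap} the extension using the induced connection is automatically admissible; for a general admissible $L$ one would either incorporate this into the hypotheses or else run the differentiation argument componentwise in local normal coordinates and then patch, using bounded geometry to convert local bounds to a uniform global estimate.
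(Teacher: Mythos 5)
Your proposal is correct and follows essentially the same route as the paper: differentiate the equation, apply the $\calC^0$ bound of Proposition \ref{prop3.3} to $\nabla u$, and absorb the commutator term $\vep^m[\nabla,L]u$ via the semiclassical Schauder estimate of Proposition \ref{lemma:sc} together with $\sup|u|\leq C\sup|f|$. Your explicit attention to the admissibility of the extended operator on $E\otimes T^*M$ (and to the elementary bound $\|f\|_{0,\alpha,\vep}\leq C\|f\|_1$) only makes precise points the paper leaves implicit.
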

\begin{proof}
Begin by differentiating both sides of the equation $(\zeta - \vep^m L) u = f$, and then commute derivatives to obtain
\[
(\zeta - \vep^m L) (\nabla u) = \nabla f - \vep^m [\nabla, L] u.
\]
The $\calC^0$ bound in the previous proposition yields that
\[
||\nabla u||_0 \leq C || \nabla f||_0 + C \vep^m || [\nabla,L]u||_0.
\]
However, $[\nabla,L]$ is a differential operator of order $m$ with uniformly bounded coefficients, and the semiclassical
estimate in Lemma \ref{lemma:sc} shows that 
\[
\vep^m || [\nabla,L] u ||_0 \leq C ||u||_{m,\alpha, \vep} \leq C' ( || f||_{0,\alpha, \vep} + \sup_M |u| ) \leq C' ||f||_1.
\]
We have used the $\calC^0$ estimate from the previous Proposition in this last inequality to 
estimate $\sup_M |u| \leq \sup_M |f|$. 

This completes the proof of Proposition \ref{prop:second-resolvent-estimate}.
\end{proof}

The proof of the sectoriality estimate on $\calC^{0,\alpha}$ is attained by applying an interpolation argument.  The two results above show that 
there exist constants $C_0, C_1$ such that
\[
||(\zeta - \vep^m L)^{-1} f||_\ell \leq C_\ell ||f||_{\ell}, \quad \ell = 0, 1,
\]
for all $f \in \calC^\infty(M,g)$ and  for all unit $\zeta$ with $\mathrm{Re}\, \zeta \leq c' < 0$, $\vep \in (0,1]$.  The space $\calC^{0,\alpha}(M,g)$ 
is identified with the interpolation space $[\calC^0(M,g), \calC^1(M,g)]_\alpha$, and from this we conclude that
\[
||(\zeta - \vep^mL)^{-1} f||_{0,\alpha} \leq C_0^{1-\alpha} C_1^{\alpha} ||f||_{0,\alpha}
\]
for all unit $\zeta$ with $\mathrm{Re}\, \zeta \leq c' < 0$, $\vep \in (0,1]$  and $u \in \calC^{\infty}(M,g)$, and hence 
in little H\"older spaces by density of $\calC^\infty$ in the little H\"older spaces.
This is the sectoriality estimate on $\calC^{0,\alpha}$.

We conclude this section by showing how sectoriality on $\calC^{k,\alpha}$, $k \geq 1$, follows from the case $k = 0$.
\begin{corollary}
If $(M,g)$ has bounded geometry of order $k + m + \alpha'$ for some $0 < \alpha' < 1$, and $L$ is an admissible operator
with coefficients uniform of order $k + \alpha'$, then $L$ is sectorial on $\calC^{k,\alpha}(M,g)$.
\label{higherregsect}
\end{corollary}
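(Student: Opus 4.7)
The plan is to iterate the argument already used to establish sectoriality on $\calC^{0,\alpha}$: first upgrade the uniform semiclassical resolvent bounds from $\calC^0$ and $\calC^1$ to $\calC^j$ for each $j \leq k+1$, and then recover $\calC^{k,\alpha}$ by interpolating between $\calC^k$ and $\calC^{k+1}$. As in the proof of Theorem \ref{thm:main-A}, it suffices to produce bounds of the form $\|u\|_j \leq C_j \|(\zeta I - \vep^m L) u\|_j$ that are uniform in $\vep \in (0,1]$ and in unit $\zeta$ with $\Re\,\zeta \leq c' < 0$.

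The induction is carried out as follows. The base cases $j=0$ and $j=1$ are exactly Propositions \ref{prop3.3} and \ref{prop:second-resolvent-estimate}. For the step from $j$ to $j+1$, I would apply $\nabla$ to $(\zeta I - \vep^m L) u = f$ and commute derivatives to produce
\[
(\zeta I - \vep^m L)(\nabla u) = \nabla f - \vep^m [\nabla, L] u,
\]
in direct analogy with the proof of Proposition \ref{prop:second-resolvent-estimate}. The commutator $[\nabla, L]$ is a differential operator of order at most $m$ whose coefficients are built from $\nabla$ applied to the coefficients of $L$ together with curvature contractions; under the bounded geometry hypothesis of order $k+m+\alpha'$ and the uniformity of $L$, these commutator coefficients are controlled in $\calC^{k,\alpha'}$ uniformly in the base point. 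Applying the inductive $\calC^j$ bound to $\nabla u$ and using the semiclassical Schauder estimate of Proposition \ref{lemma:sc} in its iterated form to absorb $\vep^m \|[\nabla, L] u\|_j$ by $\|u\|_{m+j,\alpha,\vep}$, and then closing the estimate with the already-proved $\calC^j$ bound together with the $\calC^0$ control $\sup|u| \leq C \sup|f|$ from Proposition \ref{prop3.3}, gives the desired $\calC^{j+1}$ estimate.

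Once uniform $\calC^k$ and $\calC^{k+1}$ bounds are available, I would conclude by invoking the standard interpolation identity
\[
\calC^{k,\alpha}(M,g) \cong \bigl[\calC^k(M,g), \calC^{k+1}(M,g)\bigr]_\alpha
\]
for little H\"older spaces on a manifold of bounded geometry, which yields
\[
\|(\zeta I - \vep^m L)^{-1} f\|_{k,\alpha} \leq C_k^{1-\alpha}\, C_{k+1}^{\alpha}\, \|f\|_{k,\alpha},
\]
uniformly in $\vep$ and $\zeta$. By the semiclassical reformulation of sectoriality recorded at the start of Section \ref{sec:proofThm}, this is exactly the sectoriality of $L$ on $\calC^{k,\alpha}(M,g)$.

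The main obstacle I expect is the bookkeeping in the inductive commutator step: each commutation of $\nabla$ past $L$ produces a term which is nominally of the same order as $L$ itself, so the needed smallness has to come from the $\vep^m$ prefactor together with the semiclassical Schauder estimate, and one must verify that the bounded geometry order $k+m+\alpha'$ is exactly enough to keep all commutator coefficients uniform through $k$ iterations. Nothing genuinely new happens beyond the $j=0,1$ cases; the argument is a careful iteration.
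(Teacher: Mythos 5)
Your strategy departs from the paper's, and as written it has two genuine gaps, one of which is fatal under the stated hypotheses. The decisive problem is the regularity budget for the coefficients of $L$. Your interpolation endpoint is a uniform $\calC^{k+1}$ resolvent bound, which forces you to commute $k+1$ covariant derivatives past $L$. Each commutation costs one derivative of the coefficients: $[\nabla,L]$ has coefficients built from $\nabla$ applied to the coefficients of $L$ (plus curvature terms), so it is uniform only of order $k-1+\alpha'$, not of order $k+\alpha'$ as you assert, and after $k+1$ commutations you need $\nabla^{k+1}$ of the coefficients, which the hypothesis ``coefficients uniform of order $k+\alpha'$'' (Definition \ref{def-admiss}) simply does not provide. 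The binding constraint is the coefficient regularity of $L$, not the bounded geometry order, and it runs out exactly one step before your interpolation pair $(\calC^k,\calC^{k+1})$ is complete. The paper avoids this entirely: it writes $u_\lambda=\lambda(\lambda I-L)^{-1}f$, invokes the already-proved uniform $\calC^{0,\alpha}$ bound $\|u_\lambda\|_{0,\alpha}\le C\|f\|_{0,\alpha}$, and then applies interior Schauder estimates on the uniform balls $B_{r_0}(q)$, which require only $\calC^{k,\alpha}$ control of the coefficients (available since $\alpha<\alpha'$), to bound $\|u_\lambda\|_{k+m,\alpha}$ by $\|f\|_{k,\alpha}+\|u_\lambda\|_{0,\alpha}$ after taking suprema over balls. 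No induction on derivatives and no new interpolation are needed.

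There is also a quantitative slip in your inductive step even in the range $j\le k-1$ where the coefficients suffice. The absorption you propose, $\vep^m\|[\nabla,L]u\|_j\lesssim\|u\|_{m+j,\alpha,\vep}$, is false for $j\ge 1$: the term $\|[\nabla,L]u\|_j$ contains $\sup|\nabla^{m+j}u|$, which enters the semiclassical norm with weight $\vep^{m+j}$, not $\vep^m$, so the claimed bound loses a factor $\vep^{-j}$ and the estimate does not close uniformly as $\vep\to 0$. This part is repairable: apply the semiclassical Schauder estimate of Proposition \ref{lemma:sc} to the differentiated equation $(\zeta I-\vep^m L)\nabla^{l}u=\nabla^{l}f-\vep^m[\nabla^{l},L]u$ for $l\le j$, so that the top derivatives of $u$ are measured through $\|\nabla^{l}u\|_{m,\alpha,\vep}$ with the correct $\vep^m$ weighting, with $\sup|\nabla^{l}u|$ controlled by the inductive $\calC^{l}$ bound (Propositions \ref{prop3.3} and \ref{prop:second-resolvent-estimate} being the base cases). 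With that repair your scheme yields uniform $\calC^{j}$ bounds for $j\le k$, but it still cannot reach $\calC^{k+1}$, so the final interpolation step, and hence the proof of the corollary along your route, does not go through under the hypotheses as stated.
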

\begin{proof}
We have proved the uniform boundedness of the norm of $\lambda (\lambda I - L)^{-1}: \calC^{0,\alpha} \to \calC^{0,\alpha}$.
Suppose that $f \in \calC^{k,\alpha}(M,g)$, and write $u = u_\lambda = \lambda (\lambda I - L)^{-1} f$.  The $\calC^{0,\alpha}$ sectoriality
estimate we have just proved shows that 
$||u_\lambda||_{0,\alpha} \leq C \|f \|_{0,\alpha}$ uniformly in $\lambda$.  Furthermore, by local elliptic regularity, $u_\lambda \in \calC^{k+m,\alpha}_{\mathrm{loc}}$,
and if $B$ is any ball of radius $\frac12 r_0$ and $B'$ the ball of radius $r_0$ with the same center, then
\[
||u_\lambda||_{B, k+m,\alpha} \leq C( ||f||_{B', k, \alpha} + ||u_\lambda||_{B', 0}) \leq C ( ||f||_{B', k, \alpha} + ||u_\lambda||_{B', 0, \alpha}).
\]
Taking the supremum over all such balls, we obtain
\[
||u_\lambda||_{k+m,\alpha} \leq C (||f||_{k,\alpha} + ||u_\lambda||_{0,\alpha}) \leq C (||f||_{k,\alpha} + ||f||_{0,\alpha}) = C' ||f||_{k,\alpha},
\]
which is the sectoriality estimate on $\calC^{k,\alpha}(M,g)$.
\end{proof}

This concludes the proof of Theorem \ref{thm:main-A}.

\section{The semiclassical parametrix construction}
\label{scpc}

We now provide details about the construction of the semiclassical parametrix for the family of operators $(\zeta I - \vep^m L)$.  There are two 
novel features in our presentation. The first is a minor one: the semiclassical calculus is best documented in the setting of operators on
$\RR^n$, cf.\ \cite{Dim-Sj, Martinez}, or in more modern expositions, on certain other special manifolds \cite{Zworski}. 
It adapts easily to the setting of manifolds of bounded geometry.  The second is that, unlike the approaches in these citations,
we carry out this construction using geometric microlocal analysis. This particular application of that theory was observed by Melrose and 
partially developed in his lecture notes \cite{Mel-berkeley}.  We review this method to keep this paper relatively self-contained. 

Strictly speaking, the techniques here tacitly assume that both the operator $L$ and the Riemannian manifold $(M,g)$ are $\calC^\infty$.
Therefore we shall assume that this is the case, i.e., that all data are smooth, until near the end of this section. Only there will
we show how to obtain the main conclusion of this section, i.e., the existence of the semiclassical resolvent, when $L$ and $g$
only have finite regularity. 

\smallskip

\noindent{\bf The semiclassical double space.}  A family of operators $A = A_\vep$ is called a \emph{semiclassical family of pseudodifferential 
operators} if each $A_\vep$ is a pseudodifferential
operator in some standard calculus of such operators on $M$ (see below). The Schwartz kernel $K_{A_\vep}$ of each $A_\vep$ is a distribution 
on $M \times M$ which has a classical conormal, or polyhomogeneous, singularity along the diagonal $\mathrm{diag} \subset M^2$ (see Definition \ref{def:conormal} for the precise definitions of these terms).  
As $\vep \searrow 0$, the distribution $K_{A_\vep}$, and in particular its singularity along the diagonal, must degenerate somehow. 
The ``geometric'' microlocal way to describe this involves regarding this family of Schwartz kernels as a single 
distribution $K_A(\vep, z, \tilde{z})$ on the augmented double-space $[0,\vep_0)_\vep \times M^2$.  As such, it has a singularity
along the family of diagonals $(0,\vep_0)\times \mathrm{diag}$, and as we now describe, an extra singularity along
$\{0\} \times \mathrm{diag}$. Our goal is to describe this extra singularity. 

To do this, we pass to a slightly larger space obtained by taking the real blow up of the submanifold $\{0\} \times \mathrm{diag}$ 
in $[0,\vep_0)\times M^2$. This process -- which should be carefully distinguished not only from the process of `complex blow-up'
which is common in algebraic geometry, but also from the use of the phrase blow-up associated to various sorts of rescaling arguments
in PDE -- amounts to introducing a singular cylindrical coordinate system around this submanifold and `adding' the points where `$r=0$'
as a new boundary hypersurface.  Assuming that $z, \tilde{z}$ are identical coordinates on the two copies of $M$, introduce polar coordinates 
$r = | (\vep, z-\tilde{z})| \geq 0 $, $\omega = (\vep, z-\tilde{z})/r \in S^n_+$, and declare this new {\it semiclassical double space}, which we write
as $M^2_\scl$, to be the manifold with corners on which $(r, \omega, \tilde{z})$ is a nonsingular smooth coordinate system. 
Note that $M^2_{\scl} \setminus \{r=0\}$ is canonically isomorphic to $[0,\vep_0) \times M^2 \setminus (\{0\} \times \mathrm{diag})$, 
but each point on this diagonal at $\vep=0$ is replaced by its inward-pointing spherical normal bundle at that point. The entire hypersurface 
$\{r=0\}$ is called the front face and denoted by $\ff$.  It is the total space of a fibration over $\{0\} \times \mathrm{diag}$, 
with each fiber a closed hemisphere $S^n_+$.  The points of $\ff$ correspond to directions of approach to $\{0\}\times \mathrm{diag}$. 

Although the polar coordinates give a nonsingular coordinate system, it is usually more convenient to use projective coordinates instead. 
Therefore we introduce 
\[
\vep, w = (z-\tilde{z})/\vep, \tilde{z}
\]
as a coordinate system on $M^2_\scl$ away from the `original' face $\{\vep = 0\} \times (M^2 \setminus \mathrm{diag})$. These are
defined and smooth up to points in the interior of $\ff$, but are undefined at $\vep=0$ away from the diagonal.  In their region of 
definition, $\vep$ serves as a defining function for $\ff$.  Using these, the interior of each hemisphere fiber in $\ff$ is identified 
with $\RR^n$, with linear coordinate $w$. In fact, this projective identification of each fiber of $\ff$ with $\RR^n$ is {\it well-defined 
up to linear transformations}.  In other words, if $z'$ is any other choice of local coordinates, and $\tilde{z}'$ the same coordinate 
system on the second copy of $M$, then $w' = (z' - \tilde{z}')/\vep = A w + \calO(\vep)$, for some $A \in \mathrm{Gl}_n$,
and hence $w' = Aw$ at $\vep = 0$, where $A$ is a matrix (that may depend smoothly on $\tilde{z}$, i.e., vary with the hemisphere fibers). 

Let $\pi: M^2_{\scl} \to [0,\vep_0)$ be the composition of blowdown $M^2_{\scl} \to [0,\vep_0)\times M^2$ followed by projection to
the first factor.  Each level set $\pi^{-1}(\vep)$, $\vep > 0$, is a copy of $M^2$. The preimage $\pi^{-1}(0)$ is the union of two 
manifolds with boundary: namely, the manifold with boundary obtained by blowing up $M^2$ along its diagonal, and the 
closure of $\ff$, which is a bundle of closed hemispheres. The intersection of these two hypersurfaces is naturally identified 
with the spherical normal bundle of the diagonal in $M^2$.   

\medskip
\noindent{\bf Lifts of semiclassical differential operators}
The blowup construction in this particular setting is motivated by a simple computation:  consider the lift of $\zeta I - \vep^m L$ first to 
the left factor of $M$ in $M^2$ (i.e., differentiating in $z$ rather than $\tilde{z}$), then to $[0,\vep_0)\times M^2$ and finally to $M^2_\scl$.  
To compute this lift, first observe that 
\[
\vep \del_{z_j} = \del_{w_j};
\]
hence, writing $L = \sum_{|\beta|} a_\beta(z) \del_z^\beta$ using that $z = \tilde{z} + \vep w$, we arrive at
\[
\vep^m \sum_{|\beta| \leq m} a_\beta(z) \del_z^\beta  = \sum_{|\beta| = m} a_\beta(\tilde{z}) \del_w^\beta + 
\sum_{|\beta| \leq  m} c_\beta(\tilde{z},w, \vep) \vep^{m-|\beta|}\del_w^\beta.
\]
The coefficients $c_\beta(\tilde z, w, \vep)$ depend smoothly on all three variables, and arise from the Taylor expansions in $\vep$ of 
the functions  $a_\beta(\tilde{z} + \vep w)$. The first set of terms on the right in this equality are the terms of order $0$ in
this Taylor expansion when $|\beta| = m$; these contain no positive powers of $\vep$. On the other hand, the coefficient functions 
with $|\beta| = m$ in the second sum on the right arise only from the higher order terms of the Taylor expansions of 
$a_\beta(\tilde{z} + \vep w)$ for the same $\beta$, and hence these vanish at least like $\vep$.  The key point in this calculation is that the
the lift of $\vep^m L$ is the sum of a homogeneous operator of order $m$ with coefficients which depend only on $\tilde{z}$, and hence
has {\it constant coefficients} on each fiber of $\ff$ at $\vep = 0$, and a remainder term which is a differential operator of order $m$ with 
coefficients depending smoothly on $\vep$, $w$ and $\tilde{z}$, and which vanishes at $\vep=0$.  Using this, we now write 
\[
\zeta I - \vep^m L =  (\zeta I - \sigma_m^{\scl}(L)(\del_w)) + \text{(error term vanishing at $\vep=0$)}.
\]
Here, by definition, the {\it semiclassical symbol} of $L$, $\sigma_m^{\scl}(L)(\del_w)$, is the constant coefficient operator 
which appears in the computation above.  A standard microlocal notation is to set $D_{w_j} = (1/i) \del_{w_j}$; under Fourier
transform this corresponds to the linear variable $\xi_j$.  This semiclassical symbol is closely related to the principal symbol
of $L$ by
\[
\sigma_m \left(\sigma_m^{\scl}(L)( (1/i)\del_w) \right) = \sigma_m(L)(\tilde{z}, \xi).
\]
(Since $\sigma_m^{\scl}$ is homogeneous and $m$ is even, the factor of $i$ reduces to an even more harmless factor of $\pm 1$.)

The semiclassical symbol is well defined up to a linear change of coordinates in $w$.  It also depends smoothly on $\tilde{z}$,
but the dependence is only parametric, and we frequently drop it from the notation below. 

The key point in all that follows below is that there exists a distribution $H_\zeta(w) = \vep^{-n}\overline{H}_\zeta(w)$ (which depends smoothly 
on $\tilde{z}$) such that  
\[
(\zeta I - \sigma_m^{\scl}(L)(\del_w)) \overline{H}_\zeta(w) = \overline{H}_\zeta(w) (\zeta I - \sigma_m^{\scl}(L)(\del_w)) =\delta(w).
\]
As we now explain, this is a consequence of the strong ellipticity of $L$.  We explain this and develop some further properties of 
$H_\zeta$, in the next subsection.  The appearance of the slightly odd-looking factor of $\vep^{-n}$ will be explained there too.
The full parametrix construction itself is simply a perturbative construction which uses this distribution as the leading term of
a formal series in $\vep$ which is constructed to be a `formal' inverse of $\zeta I - \vep^m L$.  This is then readily converted
to a good parametrix with rapidly decaying error terms, and then to an actual inverse if $\vep$ is small. 

\medskip
\noindent{\bf Green function for the model problem.}  Here we establish the existence and certain properties of $\olH_\zeta(w)$.

\medskip

\begin{lemma} \label{lem:greenfunc}There exists a distribution $\olH_\zeta(w)$ on $\RR^n$ such that $\olH_\zeta(w - \tilde{w})$ is the Schwartz kernel of
a translation-invariant pseudodifferential operator on $\RR^n$ of order $-m$ and which has the the following two properties.  First
\begin{equation}
(\zeta I - \sigma_m^{\scl}(L)(\del_w)) \circ \olH_\zeta = \olH_\zeta \circ (\zeta I - \sigma_m^{\scl}(L)(\del_w) = \delta(w).
\label{modeldelta}
\end{equation}
In addition, $\olH_\zeta$ depends smoothly on $\tilde{z}$, and satisfies $|\olH_\zeta(w)| \leq C e^{-\delta |w|}$ where $\delta > 0$ provided
$\mathrm{Re}\, \zeta < 0$.
\end{lemma}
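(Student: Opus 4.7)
The plan is to construct $\overline{H}_\zeta$ explicitly via the Fourier transform, interpret it as a parameter-dependent pseudodifferential operator of order $-m$, and then obtain the exponential decay by a contour shift argument that exploits strong ellipticity.

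Since $\sigma_m^{\scl}(L)(\partial_w) = \sum_{|\beta|=m} a_\beta(\tilde z) \partial_w^\beta$ has constant coefficients in $w$ (with $\tilde z$ a smooth parameter), Fourier transform converts it to multiplication by the polynomial $p(\tilde z, \xi) := \sigma_m(L)(\tilde z, \xi)$ on $\RR^n$ (up to the usual factor $i^m = \pm 1$). Strong ellipticity asserts that the spectrum of $p(\tilde z, \xi)$ lies in a proper sector of the right half-plane for all $(\tilde z, \xi) \in T^*M$, so for $\zeta$ with $\mathrm{Re}\,\zeta < 0$ (and $|\zeta|=1$), the resolvent estimate
\[
\bigl|(\zeta - p(\tilde z, \xi))^{-1}\bigr| \leq \frac{C}{|\zeta| + |\xi|^m}
\]
holds uniformly in $\tilde z$. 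I would therefore define
\[
\overline{H}_\zeta(w; \tilde z) := (2\pi)^{-n}\int_{\RR^n} e^{iw\cdot\xi}\bigl(\zeta - p(\tilde z,\xi)\bigr)^{-1}\, d\xi,
\]
as an oscillatory integral. The integrand is a classical symbol of order $-m$ depending smoothly on $\tilde z$, so this convolution kernel defines a translation-invariant pseudodifferential operator of order $-m$ in $w$. Smooth dependence on $\tilde z$ is immediate from differentiation under the integral, since each $\partial_{\tilde z}^\gamma(\zeta - p)^{-1}$ remains of symbol order $-m$. The identity \eqref{modeldelta} holds by construction: applying $\zeta I - \sigma_m^{\scl}(L)(\partial_w)$ under the integral produces the factor $\zeta - p(\tilde z,\xi)$, which cancels the resolvent and leaves $\int e^{iw\cdot\xi}\, d\xi/(2\pi)^n = \delta(w)$.

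For the exponential decay, the key is that $\xi \mapsto p(\tilde z, \xi)$ is a homogeneous polynomial, hence extends as an entire function in $\xi \in \CC^n$. I claim there exists $\delta_0 > 0$ such that for every $\xi = \eta + i\tau \in \CC^n$ with $|\tau| \leq \delta_0 (1 + |\eta|)$, one still has $|(\zeta - p(\tilde z, \xi))^{-1}| \leq C'(1 + |\xi|)^{-m}$, with $C'$ independent of $\tilde z$. This follows by a compactness argument: on the unit sphere $|\eta|=1$, the spectrum of $p(\tilde z, \eta)$ lies strictly inside the right half-plane, uniformly in $\tilde z$ (by the uniformity hypothesis in Definition \ref{admop}), so a small complex perturbation of $\eta$ keeps the spectrum away from $\zeta$; homogeneity scales this to all $|\eta| \geq 1$, while compactness handles a neighborhood of $\eta = 0$. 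Given this, I apply Cauchy's theorem to shift the contour of integration from $\RR^n$ to the translate $\RR^n + i\delta_0 \hat w$, where $\hat w = w/|w|$. The shift is justified because the integrand decays like $|\xi|^{-m}$ at infinity (and no singularities are crossed in the strip), and it produces a prefactor
\[
e^{i(w)\cdot(i\delta_0 \hat w)} = e^{-\delta_0 |w|}.
\]
The shifted integral itself is bounded uniformly in $w$ and $\tilde z$ by the resolvent estimate on the shifted contour, giving $|\overline{H}_\zeta(w; \tilde z)| \leq C e^{-\delta |w|}$ for any $0 < \delta < \delta_0$.

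The step I expect to be the main obstacle is justifying the uniform holomorphic extension of $(\zeta - p(\tilde z, \xi))^{-1}$ into a strip of width proportional to $1 + |\eta|$, together with the accompanying symbol estimate, since this is where strong ellipticity and the uniform bounds of Definition \ref{admop} (ii) must be used together. The rest is a standard oscillatory-integral argument.
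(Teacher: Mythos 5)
Your proposal is correct and follows essentially the same route as the paper: define $\overline{H}_\zeta$ as the (regularized) inverse Fourier transform of $(\zeta - \sigma_m^{\scl}(L)(\xi))^{-1}$, use strong ellipticity plus the uniformity hypothesis for the symbol estimate and the identity \eqref{modeldelta}, and obtain exponential decay by deforming the contour into complex $\xi$. The only differences are cosmetic — the paper shifts by a fixed small imaginary vector $\eta$ and lets its direction vary to cover all cones, while you shift in the $w$-dependent direction $\hat{w}$ and claim a (stronger than needed) conic-neighborhood extension; both rest on the same uniform resolvent bound.
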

\begin{proof}  Strong ellipticity of $L$ implies the function $\xi \mapsto (\zeta I - \sigma_m^{\scl}(L)(\xi))^{-1}$ is $\calC^\infty$ and polynomially bounded,
hence is an element of $\mathcal S'$, the space of tempered distributions. As such, we may take its inverse Fourier transform and define
\[
\olH_\zeta(w) = \frac{1}{(2\pi)^n} \int_{\RR^n}  e^{iw \cdot \xi} (\zeta I - \sigma_m^{\scl}(L)(\xi))^{-1}\, d\xi.
\]
This too is an element of $\mathcal S'$.  This integral converges if $m > n$, but if this is not the case, we can make sense of it 
as the distributional limit
\[
\lim_{\delta \to 0} \frac{1}{(2\pi)^n} \int_{\RR^n}  e^{iw \cdot \xi} \chi(\delta \xi) (\zeta I - \sigma_m^{\scl}(L)(\xi))^{-1}\, d\xi,
\]
where $\chi \in \calC^\infty_0(\RR^n)$ equals $1$ in some neighborhood of the origin. In other words, 
pairing this expression with a Schwartz function $\phi(w)$, we note that the limit in $\delta$ can be taken in
the classical sense on the right side of 
\begin{multline*}
\int_{\RR^n} \left(\int_{\RR^n}  e^{iw \cdot \xi} \chi(\delta \xi) (\zeta I - \sigma_m^{\scl}(L)(\xi))^{-1}\, d\xi\right)\, \phi(w)\, dw =  
\int_{\RR^n}  \hat{\phi}(-\xi) \chi(\delta \xi) (\zeta I - \sigma_m^{\scl}(L)(\xi))^{-1}\, d\xi.\end{multline*}

Using similar standard distributional manipulations, we can also justify that
\[
(\zeta I - \sigma_m^{\scl}(L)(\del_w)) \olH_\zeta(w) = \olH_\zeta(w) (\zeta I - \sigma_m^{\scl}(L)(\del_w)) = (2\pi)^{-n} \int_{\RR^n} e^{iw\cdot \xi} \, d\xi = I. 
\]

We recall also that since $(\zeta I - \sigma_m^{\scl}(L)(\xi))^{-1}$ is $\calC^\infty$, its Fourier transform decays rapidly. Indeed, interpreted again
as Fourier transforms of tempered distributions, this follows from the (distributional) identity
\[
\begin{aligned}
\olH_\zeta(w) & = |w|^{-2\ell} (2\pi)^{-n}\int_{\RR^n}  \left(\Delta_\xi^\ell e^{iw\cdot \xi}\right) (\zeta I - \sigma_m^{\scl}(L)(\xi))^{-1}\, d\xi \\ 
& =  |w|^{-2\ell} (2\pi)^{-n}\int_{\RR^n}  e^{iw\cdot \xi}  \Delta_\xi^\ell (\zeta I - \sigma_m^{\scl}(L)(\xi))^{-1}\, d\xi
\end{aligned}
\]
for any $\ell > 0$, and the fact that the integral is classically convergent once $-n > -m - 2\ell$, i.e., $\ell > (n-m)/2$.  
Next, apply any derivative $\del_w^\beta$ to both sides of this equality, where $\beta$ is a multi-index, and choose $\ell > (n-m + |\beta|)/2$ so that 
the integral is still absolutely convergent, to deduce that $\olH_\zeta(w)$ is smooth away from $w=0$.  For an even more refined statement,
apply $w^\alpha \del_w^\beta$ where $|\alpha|= |\beta|$. Passing through the Fourier transform, this becomes a constant multiple
of $\del_\xi^\alpha \xi^\beta$ acting on the exponential, which integrates by parts to an expression where $\xi^\beta (-\del_\xi)^\alpha$ acts on
$\Delta_\xi^\ell (\zeta I - \sigma_m^{\scl}(L)(\xi))^{-1}$.  The resulting integral is once again convergent if $\ell > (n-m)/2$. 
This shows that $\olH_\zeta(w)$ has `stable regularity' with respect to repeated differentiation by the vector fields $w_i \del_{w_j}$, a property
which is known as conormality with respect to the origin $\{w=0\}$.  By a further analysis, which is left to the reader, the expansion
of $\Delta_\xi^\ell (\zeta I - \sigma_m^{\scl}(L)(\xi))^{-1}$ as $|\xi| \to \infty$ is transformed to an expansion in powers of $w$ as $w \to 0$,
which is the assertion that $\olH_\zeta(w)$ is polyhomogeneous at $w=0$. 

We now explain the assertion about exponential decay.  The integrand is a smooth function of $\xi$.   By the uniformity of $L$ as
a function of $\tilde{z}$, there exists some $\delta' > 0$ sufficiently small depending on $\mathrm{Re}\, \zeta < 0$ so that if $\eta \in \RR^n$ 
and $|\eta| < \delta'$, then the deformed map $\xi \mapsto (\zeta I - \sigma_m^{\scl}(L)(\xi + i\eta))^{-1}$ remains smooth in $\xi$ 
and decays as $\xi \to \infty$ at the same rate as the undeformed function.  

Using these observations and a similar renormalization scheme to make the integral convergent, we can deform the contour of integration from $\RR^n$ 
to $\RR^n + i\eta$ and write
\[
\olH_\zeta(w) = e^{- w \cdot \eta} \frac{1}{(2\pi)^n} \int_{\RR^n}  e^{iw \cdot \xi} (\zeta I - \sigma_m^{\scl}(L)(\xi + i\eta))^{-1}\, d\xi.
\]
The integral is defined as before, using the same observations and calculations as above, and is bounded as $w \to \infty$, 
but is now accompanied by the prefactor $e^{- w\cdot \eta}$ which decays exponentially in any cone properly contained in the half-space $w \cdot \eta \geq 0$.
Since $\eta$ can point in any direction, this shows that $\olH_\zeta(w)$ decays exponentially, at some rate depending on $\mathrm{Re}\, \zeta < 0$,
as $|w| \to \infty$.  This solution is smooth in the parameters $\zeta$ and $\tilde{z}$. This establishes the lemma.
\end{proof}

Using these results, it is now straightforward to see that $\olH_\zeta \star f (w)$ is exponentially decreasing in $w$ if $f \in \calC^\infty_0(\RR^n)$.

\medskip

\medskip
\noindent{\bf Manifolds with corners, blowups and polyhomogeneity.}
To get further into the details, we first recall the following terminology and notions.  All of these are described more
carefully in \cite{MazzeoEdge}. First, suppose that $X$ is a manifold with corners. This means
that near any point $q \in X$ there is a local coordinate system $(x_1, \ldots, x_k, y_1, \ldots, y_{\ell-k})$, $\ell = \dim X$, such
that each $x_i \geq 0$ and each $y_i \in (-\epsilon, \epsilon)$, and $(x,y) = (0,0)$ at $q$.  We say that $q$ then lies on a corner
of codimension $k$.    We next define the space $\calV_b(X)$ of $b$-vector fields on $X$ to consist of all smooth vector fields
which are unconstrained in the interior of $X$ and which are tangent to all boundary faces. In terms of the coordinates above,
any $V \in \calV_b(X)$ is, near $q$, a linear combination, with smooth coefficients, of the basic vector fields 
$x_i \del_{x_j}$, $i, j = 1, \ldots, k$ and $\del_{y_i}$, $i = 1, \ldots, \ell-k$. 

\begin{definition}
\label{def:conormal}
A distribution $u$ on $X$ is said to be conormal to $\del X$ if there exists some fixed function space $E$ such that
\[
V_1 \ldots V_N u \in E,\ \ \text{for all}\ V_i \in \calV_b(X)\ \text{and all}\ N \in \mathbb N.
\]

\end{definition}

Typically we let $E$ be a weighted $L^2$ or $L^\infty$ space. Thus if $u$ is conormal, then $u$ is $\calC^\infty$ on the 
interior of $X$, but may have singular behavior along the boundary. These singularities are, however, `tangentially smooth'.

\begin{definition}
\label{def:polyhom}

A distribution $u$ on $X$ is said to be polyhomogeneous at $\del X$ if it is conormal, and in addition has a classical expansion at each boundary face
and product-type expansions at the corners. Thus, near a codimension one face $x=0$, for example, with $y$ a coordinate
system along that face, 
\[
u(x,y) \sim \sum_{\mathrm{Re} \gamma_j \to \infty}  \sum_{k=0}^{N_j}  a_{jk}(y) x^{\gamma_j} (\log x)^k,
\]
for some discrete set of exponents $\gamma_j$, where each coefficient $a_{jk}$ is smooth. Near each corner, where $x_1, \ldots, x_k = 0$, 
$u$ has an expansion involving various (possibly nonintegral) powers of each $x_j$ and, for each monomial in the series, additional factors which are
positive integer powers of each $\log x_i$.

\end{definition}

These expansions hold for all derivatives of $u$ as well, in the sense that any derivative of $u$ has an expansion
where the summands are the corresponding derivatives of each term in the series for $u$.   These series are asymptotic,
but (usually) not convergent.

The space of all such polyhomogeneous distributions is denoted $\calA_{\phg}(X, \del X)$.  We may easily extend this
to define polyhomogeneous sections of vector bundles over $X$ as well.

A submanifold $Z \subset X$ is called a \emph{$p$-submanifold} if near any $q \in Z$ there is a set of coordinates
for $X$ as above such that in some neighborhood of $q$, $Z = \{x_1 = \ldots = x_r = 0,\ y_1 = \ldots = y_s = 0\}$. Thus locally
near $q$, $X$ is the product of (some small neighborhood of) $Z$ with another manifold with corners; the $p$ stands for `product'. 
If $Z$ is such a $p$-submanifold, then we may define the blowup of $X$ around $Z$, denoted $[X; Z]$ to be the union of 
$X \setminus Z$ and the inward-pointing spherical normal bundle of $Z$. Thus $[X ; Z]$ is a new manifold with corners, with
a new boundary hypersurface created by this blowup. As in our special case of the semiclassical double space defined earlier,
we may `construct' this blowup by taking cylindrical coordinates around $Z$ in $X$, say $(r, \omega, z)$, where $z \in Z$,
$\omega$ is a spherical normal vector and $r \geq 0$, and ``adding the $r=0$ face''.  We denote the new `front face'
of this blowup by $\ff [X;Z]$.  

\medskip

\noindent{\bf Pseudodifferential operators via their Schwartz kernels}
Now let $M$ be a manifold (possibly open, but without boundary). We may define the blowup $[M^2, \mathrm{diag}]$.
This has a front face, $\ff$, which is the spherical normal bundle of $\mathrm{diag}$ in $M^2$.
\begin{definition}
A pseudodifferential operator $A:C_c^{\infty}(M) \to \mathcal{D}^{\prime}(M)$ on $M$ is a linear operator for which the Schwartz kernel $K_A$ of $A$ is the sum
of an element of $\calA_{\phg}( [M^2, \mathrm{diag}], \ff)$ and a distribution which is conormal and supported 
along $\ff$. 
\end{definition}
This is a purely intrinsic and `geometric' definition of the space of pseudodifferential operators. It is not easy to
work with computationally, however, and it is more customary to use other definitions using oscillatory integrals,
cf.\ \cite{Shubin} for example, which are better suited for proving such things as showing that the composition of
two pseudodifferential operators is again pseudodifferential. 

The inclusion in this definition of the extra terms which are supported on the front face may seem a technical
annoyance, but it is worth pointing out that the Schwartz kernel of the identity operator, $\delta(z - \tilde{z})$, 
has this property. In fact, if $P$ is any differential operator (with smooth coefficients) on $M$, then
the Schwartz kernel of $P$ is equal to $P$ (acting in the $z$ variable) applied to $\delta(z - \tilde{z})$, and hence
is again supported on this front face.

\medskip

\noindent{\bf Semiclassical pseudodifferential operators}
We are now in a position to define semiclassical families of pseudodifferential operators on $M$.   The semiclassical
double-space $M^2_{\scl}$ has a distinguished submanifold $\mathrm{diag}_{\scl}$, which we call the lifted
diagonal.  It is the closure in $M^2_{\scl}$ of $(0,\vep_0) \times \mathrm{diag}$.  This closure
intersects the front face of $M^2_{\scl}$ at a submanifold which contains a single point $\{w=0\}$ on each
hemisphere fiber.  Clearly $\mathrm{diag}_{\scl}$ is a  $p$-submanifold of $M^2_{\scl}$, so we may pass to
the blowup $[M^2_{\scl}, \mathrm{diag}_{\scl}]$. This has three
boundary hypersurfaces:  the original face at $\vep=0$ away from the diagonal, the front face $\ff$ obtained
by blowing up the diagonal at $\vep=0$, and the new front face obtained in this final blowup.

We say that $A_\vep$ is a \emph{semiclassical pseudodifferential operator} \label{ref:sc-psido} if its Schwartz kernel $K_A(\vep, z, \tilde{z})$ lifts
to $[M^2_{\scl}, \mathrm{diag}_{\scl}]$ to be the sum of a distribution polyhomogeneous at all boundaries of this manifold with corners and
a conormal distribution supported on the new front face, and if $K_A$ vanishes faster than any power of $\vep$ along the
original face. It makes sense to restrict $K_A$ to each level set $\pi^{-1}(\vep)$ for $\vep > 0$,
and on each of these we must obtain the Schwartz kernel of a pseudodifferential operator.  This definition
now imposes precise constraints on how these Schwartz kernels degenerate as $\vep \to 0$.  In particular,
if we change to coordinates $\vep, w, \tilde{z}$ on $M^2_{\scl}$, then the `pseudodifferential singularity' occurs
at $w=0$ for each $\vep$, and at $\vep = 0$ there is an expansion $K_A \sim \sum \vep^{\gamma_j} K_A^{(j)}(w,\tilde{z})$
where each coefficient is a pseudodifferential operator on each hemisphere fiber of $\ff$ (and as before, $\{\gamma_j\}$
is a discrete set of exponents with real parts tending to infinity). 

In fact, we are only interested in Schwartz kernels for which the expansion as $\vep \to 0$ is of the form
\[
K_A(\vep, w, \tilde{z}) \sim \sum_{j=-n}^\infty  \vep^j K_{A,j}(w,\tilde{z}).
\]
The reason for starting this series at $-n$ is as follows.  As we have already described, the lift of $\vep^m L$ (as a differential
operator, not its Schwartz kernel) is an operator with coefficients smooth up to $\ff$; in particular, it has a series expansion
as $\vep \to 0$ with leading term $\sigma_m^{\scl}(L)(\del_w)$.  On the other hand, the identity operator has Schwartz kernel
\[
\delta(z - \tilde{z}) = \delta( \vep w) = \vep^{-n} \delta(w).
\]
Thus if we expand each factor in the equation
\begin{equation}
(\zeta I - \vep^m L) G(\vep, w, \tilde{z}) = \vep^{-n} \delta(w)
\label{expand}
\end{equation}
in powers of $\vep$, and continue to think of the first factor on the left as a differential operator instead of a Schwartz kernel,
then at least formally we expect $G$ to have a series expansion involving the powers $\vep^j$ for $j \geq -n$.  

In any case, this illustrates how the introduction of the space $M^2_{\scl}$ provides a setting where it is possible to ``see'' the
transition from the ordinary pseudodifferential operators on each $\pi^{-1}(\vep)$ to the limiting model problem on $\RR^n$.

Accordingly, we now define a class of semiclassical pseudodifferential operators whose expansions at $\ff$ involve only integer powers: 
\begin{definition} 
\label{def:sc-pseud}
If $(M,g)$ is a manifold of bounded geometry, then $\Psi^{k,\ell}_{\scl-\unif}(M,g)$ consists of those semiclassical pseudodifferential
operators on $M$ which have pseudodifferential order $k$ on each level set $\pi^{-1}(\vep)$ and which have a series expansion in $\vep$
at $\ff$ with only integer powers, with initial term $\vep^\ell$. 

The subscript `unif' indicates that we restrict further to allow only kernels which are supported in some neighborhood 
$\mathrm{dist}_g(z, \tilde{z}) \leq C$ of $\mathrm{diag}_{\scl} \cup \ff$, where the constant $C$ may depend on the operator. 
\end{definition}

\medskip

\noindent{\bf Parametrix construction}
We now commence with the parametrix construction.  Our goal is to find an element $G \in \Psi^{-m,-n}_{\scl-\unif}(M,g)$ such
that, for each $\zeta$ with $|\zeta| = 1$ and $\mathrm{Re}  \, \zeta < 0$, 
\[
(\zeta I - \vep^m L) G = I - Q_{1}, \quad G (\zeta I - \vep^m L) = I - Q_{2},
\]
where $Q_1, Q_2 \in \Psi^{-\infty, \infty}_{\scl-\unif}(M,g)$.  As explained above, we do this `formally', i.e., in Taylor series, at $\ff$,
and then take a Borel sum of the resulting formal expansion.

More carefully, returning to \eqref{expand}, expand each factor into its formal series expansion:
\[
\left( (\zeta I - \sigma_m^{\scl}(L)(\del_w)) + \sum_{j=1}^\infty \vep^j E_j \right) \sum_{k=-n}^\infty \vep^k G_k(w,\tilde{z}) = \vep^{-n} \delta(w).
\]
Here the $E_j$ are the differential operators (of order $\leq m$) arising in the Taylor expansion of $\zeta I - \vep^m L$. Carrying out the 
composition and collecting like powers of $\vep$, we obtain a sequence of equations
\[
\begin{aligned}
(\zeta I - \sigma_m^{\scl}(L)(\del_w)) G_{-n}(w, \tilde{z}) & = \delta(w) \\
(\zeta I - \sigma_m^{\scl}(L)(\del_w)) G_{-n+k}(w, \tilde{z}) & = F_k(w,\tilde{z}),
\end{aligned}
\]
where each $F_k = \sum_{i=1}^{k} E_i G_{-n+ k-i}$ is an `error term'.   This is a sequence of equations on $\RR^n_w$, where the various
terms all depend smoothly on $\tilde{z}$.  These equations indicate that we must set
\[
G_{-n}(w) = \olH_\zeta(w), \quad \mbox{and}\ \ \ G_{-n+k}(w) = \olH_\zeta(w) \star F_k(w)\ \ k \geq 1. 
\]
for each $\tilde{z}$. Since the equations with $k \geq 1$ are constant coefficient in $w$, they are solved by convolving with
the fundamental solution $\olH_\zeta(w)$.

Since each $E_i G_{-n+k-i}$ has pseudodifferential order at most $0$ and $\olH_\zeta$ has order $-m$, each $F_k$ is the Schwartz kernel
of a translation-invariant (in $w$) pseudodifferential operator of order no more than $-m$.  We may solve this equation
inductively, given the properties  of $\olH_\zeta$ established in Lemma \ref{lem:greenfunc}, but then are faced with showing that 
the Borel sum of this series of singular kernels still has the correct behavior. There is a slightly easier way to proceed which allows
us to work more directly with $\calC^\infty$ kernels.  Namely, we first find a pseudodifferential operator $\widetilde{G}(\vep, \cdot, \cdot)$ 
on each level set $\pi^{-1}(\vep)$ which solves 
\[
(\zeta I - \vep^m L) \widetilde{G}(\vep) = \vep^{-n} \delta(w) + \vep^{-n}\calR(\vep, w, \tilde{z}),
\]
where $\calR$ is smooth in all variables, $\vep, w, \tilde{z}$. This involves carrying out the complete parametrix construction
for the nondegenerate operator on each level set in the standard pseudodifferential calculus, but carrying along $\vep$ as a smooth parameter.
To compensate for the factors $\vep^{-n}$ on the right, we choose $\widetilde{G} \in \Psi^{-m, -n}_{\scl-\unif}$.  Said differently, this
is simply the nondegenerate elliptic parametrix construction (with parametrix cut off to have support in a neighborhood of the diagonal),
carried out smoothly in the parameter $\vep$.

We now need to find additional terms in the parametrix which cancel off the full Taylor series in $\vep$ of the 
remainder term $\vep^{-n}\calR$. This involves inductively solving a sequence of equations
\[
\begin{aligned}
(\zeta I - \sigma_m^{\scl}(L)(\del_w)) \widetilde{G}_{-n}(w, \tilde{z}) & = \calR_0(w,\tilde{z}) \\
(\zeta I - \sigma_m^{\scl}(L)(\del_w)) \widetilde{G}_{-n+k}(w, \tilde{z}) & = \sum_{i=1}^k  E_i \calR_{-n + k-i},
\end{aligned}
\]
where $\calR \sim \sum \vep^k \calR_k$.  The advantage is that the right hand sides are all smooth and we can assume that 
they are compactly supported in, say, $\{|w| \leq 1\}$ for all $\tilde{z}$.  The solutions are given by
\[
\widetilde{G}_{-n} = \olH_\zeta\star \calR_0 (w), \qquad \widetilde{G}_{-n+k} = \olH_\zeta \star \left( \sum_{i=1}^k E_k \calR_{-n+k-i}\right)(w).
\]

\noindent{\bf Conclusion of parametrix construction}  We have now constructed both $\widetilde{G}$ and the sequence
of smooth, exponentially decaying terms $\widetilde{G}_{-n+k}$, $k \geq 0$.  An important fact is that given any such
sequence, it is possible to construct a function $\widetilde{G}'(w)$ which is smooth in the interior of $M^2_{\scl}$, decays to all 
orders at the original face, and which has expansion in powers of $\vep$ at $\ff$ of the form
\[
\widetilde{G}'(w) \sim \sum_{k=0}^\infty \vep^{-n+k} \widetilde{G}_k(w).
\]
This is the Borel sum of this series and is an element of $\Psi^{-\infty, -n}_{\scl-\unif}$.

Our final semiclassical parametrix is now defined by
\[
G(\vep, w, \tilde{z}) = \widetilde{G}(\vep, w, \tilde{z}) + \widetilde{G}'(\vep, w, \tilde{z}) \in \Psi^{-m, -n}_{\scl-\unif}.
\]

\medskip
\noindent{\bf Boundedness properties}
We conclude this section by sketching the proof of boundedness properties of elements of $\Psi^{*,*}_{\scl-\unif}(M)$ on the
semiclassical H\"older spaces $\calC^{k,\alpha}_\scl(M,g)$, as defined in \S \ref{sec:background}.   First define the family of spaces
\[
\vep^\lambda \calC^{k,\alpha}_{\vep} = \{u(\vep, z) = \vep^\lambda \tilde{u}(\vep, z)\ \mbox{where}\ \tilde{u} \in \calC^\infty( \, [0, \vep_0)\, ; \, 
\calC^{k,\alpha}_{\vep}(M,g) \, )\}.
\]
In other words, an element of this space can be represented by a formal series $u \sim \sum_{j \geq 0} \vep^{\lambda+j} u_j$ with
each $u_j \in \calC^{k,\alpha}_{\vep}$. 
\begin{proposition}
If $A \in \Psi^{\kappa, \mu}_{\scl-\unif}$ for some $\kappa \in \mathbb Z$, then
\[
A: \vep^\lambda \calC^{k,\alpha}_{\vep}  \longrightarrow \vep^{\lambda + \mu + n}\calC^{k-\kappa,\alpha}_{\vep}
\]
is bounded provided $\kappa \leq k$. 
\label{bddprop}
\end{proposition}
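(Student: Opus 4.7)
First I would reduce to the case $\lambda = 0$ by homogeneity. Multiplication by $\vep^{-\lambda}$ is an isometry $\vep^\lambda \calC^{k,\alpha}_\vep \to \calC^{k,\alpha}_\vep$ which commutes with the action of $A$ in the spatial variable. Using the identification $\calC^{j,\alpha}_\vep(M,g) = \calC^{j,\alpha}(M, g_\vep)$ for $g_\vep = \vep^{-2} g$, the claim reduces to showing that $A$ is bounded as a map $\calC^{k,\alpha}(M, g_\vep) \to \calC^{k-\kappa,\alpha}(M, g_\vep)$ with operator norm $O(\vep^{\mu + n})$ as $\vep \to 0$.

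Next, using a smooth partition of unity on $M^2_\scl$, I would split the Schwartz kernel as $K_A = K_1 + K_2$, where $K_1$ is supported in a small neighborhood of $\ff$ (which contains the pseudodifferential singularity along $\mathrm{diag}_\scl$ and the leading semiclassical behavior), and $K_2$ is supported in the region $\mathrm{dist}_g(z, \tilde z) \geq c > 0$. By the definition of $\Psi^{*,*}_{\scl-\unif}$, $K_2$ vanishes to all orders in $\vep$, and together with the uniform compact support condition this contribution acts between any Hölder spaces with operator norm $O(\vep^N)$ for every $N$, hence is negligible.

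For the principal piece $K_1$ I would use the projective coordinates $(\vep, w, \tilde z)$ with $w = (z - \tilde z)/\vep$. The structure of $\Psi^{\kappa, \mu}_{\scl-\unif}$ provides an asymptotic expansion at $\ff$ of the form
\[
K_1(\vep, z, \tilde z) \sim \sum_{j \geq 0} \vep^{\mu + j}\, F_j\!\left(\tfrac{z - \tilde z}{\vep},\, \tilde z\right),
\]
where each $F_j(\cdot, \tilde z)$ is the Schwartz kernel of a classical pseudodifferential operator of order $\kappa$ in $w$, depending smoothly on $\tilde z \in M$ with uniform bounds, and polyhomogeneous at $|w| \to \infty$. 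Changing the variable of integration from $\tilde z$ to $w$ via $\tilde z = z - \vep w$ introduces a Jacobian $\vep^n$, converting $A$ into $\vep^{\mu + n}$ times a classical order-$\kappa$ pseudodifferential action in the $w$-variable. Since in the rescaled metric $g_\vep$ the coordinate $W = z/\vep$ is essentially Euclidean on balls of uniform size (by the computation in the proof of Proposition \ref{lemma:sc}), this action is exactly a uniform family of classical pseudodifferential operators of order $\kappa$ on $(M, g_\vep)$. Applying the standard Hölder Schauder estimate for order-$\kappa$ pseudodifferential operators, uniformly on the atlas of $g_\vep$-balls of radius $r_0$, yields the required estimate.

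The main technical obstacle is establishing \emph{uniformity} of the resulting Schauder constant as $\vep \to 0$ and as $\tilde z$ ranges over the noncompact manifold $M$. Two ingredients handle this: the `unif' condition in Definition \ref{def:sc-pseud}, which guarantees that the symbolic and kernel bounds of $A$ are independent of $\tilde z$; and the fact that the rescaled metrics $g_\vep$ have bounded geometry uniformly in $\vep \in (0,1]$, so that one can patch the local pseudodifferential Schauder estimates into a global Hölder estimate without losing control. The polyhomogeneous decay of the $F_j$ at $|w| \to \infty$ is essential here, since it ensures integrability in $w$ after the Jacobian change of variables and allows one to control the contributions far from the diagonal.
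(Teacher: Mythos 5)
Your proposal is correct and follows essentially the same route as the paper: passing to the projective coordinates $w=(z-\tilde z)/\vep$ so that the Jacobian produces the factor $\vep^{\mu+n}$, reducing the diagonal singularity to the standard H\"older boundedness of order-$\kappa$ pseudodifferential operators on unit-scale balls for the rescaled metric $g_\vep$, and using decay in $w$ together with rapid vanishing at the original face for the off-diagonal contribution, with uniformity coming from bounded geometry and the `unif' support condition. The paper organizes the same estimates by support regions ($|w|\le 1$ versus $1\le |w|\le C/\vep$, patched by a partition of unity into $\vep$-balls on $M$) rather than through the expansion at $\ff$, which in a fully detailed write-up should be invoked only as kernel bounds (leading term plus a remainder of the same structure) since the series at $\ff$ is merely asymptotic.
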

\begin{remark}
We can extend this result to allow operators of non-integral order, for example using a standard interpolation result, but
omit this here since it is not needed.
\end{remark}
\begin{proof}
The first observation is that if $\kappa \leq k$, and $P = \sum_{|\alpha| \leq \kappa} a_\alpha(z) \vep^{|\alpha|}\del_z^\alpha$ is a semiclassical 
{\it differential} operator with coefficients which are uniformly bounded in $\calC^\infty$, then directly from the definition,
\begin{equation}
P: \calC^{k,\alpha}_\vep(M,g) \longrightarrow \calC^{k-\kappa,\alpha}_{\vep}(M,g)
\label{bddP}
\end{equation}
is bounded.  To relate this to a pseudodifferential boundedness theorem, we have previously noted that the Schwartz kernel of $P$ on $M^2_{\scl}$
is a distribution supported along $\mathrm{diag}_{\scl}$, namely
\[
K_{P}(\vep, w, \tilde{z}) = \vep^{-n} \sum_{|\alpha|\leq \kappa}  a_\alpha(\tilde{z} + \vep w) (\del_w^\alpha \delta(w)) \in \Psi^{\kappa, -n}_{\scl-\unif}.
\]
Thus \eqref{bddP} is a very special case of the boundedness in Proposition \ref{bddprop}, with $\lambda = 0$.

Recalling the definition of a semiclassical pseudodifferential operator from page \pageref{ref:sc-psido}, we prove the boundedness property for kernels in $\Psi^{-\infty,-n}_{\scl-\unif}(M,g)$ supported away from the diagonal, and another for kernels in $A \in \Psi^{\kappa,-n}_{\scl-\unif}(M,g)$ supported near the diagonal.

Thus first suppose that $A \in \Psi^{-\infty,-n}_{\scl-\unif}(M,g)$ has Schwartz kernel $K_A$ supported away from $\mathrm{diag}_{\scl}$ in $M^2_{\scl}$; 
to be specific, suppose $\mathrm{supp}(K_A) \subset \{\vep \leq \mathrm{dist}(z, \tilde{z}) \leq C\}$, or equivalently, $1 \leq |w| \leq C/\vep$.   
We then have that in projective coordinates $(z, \tilde{w})$, $\tilde{w} = (\tilde{z} - z)/\vep$, 
\begin{align*}
\left|\int_M K_A(\vep, z, \tilde{z}) u(\tilde{z})\, dV_g(\tilde{z})\right|  &\leq C \|u\|_{\infty} \int_{1 \leq |\tilde{w}| \leq C/\vep} |K_A(\vep, z, \tilde{w})|\, \vep^n dV(\tilde{w}) \\ 
&\leq C \|u\|_{\infty} \int_{1 \leq |\tilde{w}| \leq C/\vep} ( 1 + |\tilde{w}|)^{-N}\, d\tilde{w} \leq C' \|u\|_{\infty} < \infty
\end{align*}
since $K_A$ blows up like $\vep^{-n}$ and $\vep^n |K_A|$ is Schwartz in $\tilde{w}$, uniformly as $\vep \searrow 0$.   Furthermore,
any semiclassical derivative $\vep^{|\alpha|} \del_z^\alpha$ applied to $K_A$ yields a kernel of the same form. This proves that 
if $A \in \Psi^{-\infty, -n}_{\scl-\unif}$ has kernel with this special support property, then
\[
A: \calC^{k,\alpha}_{\vep}(M,g) \longrightarrow \calC^{r,\alpha}_{\vep}(M,g)
\]
for any $r \in \mathbb N$.

Now let $A \in \Psi^{\kappa,-n}_{\scl-\unif}(M,g)$ have Schwartz kernel supported in the region $|w| \leq 1$, and as before, assume $\kappa \leq k$. 
We do not need to assume that $\kappa$ is an integer, but then must change the H\"older indices accordingly and interpret the borderline
cases where $\kappa = k -i + \alpha$, $i \in \mathbb N$, in terms of Zygmund spaces.
We can then proceed by a combination of a rescaling argument and invoking the fact that an ordinary pseudodifferential operator of order $\kappa$ 
in a ball of size $2$ induces a map $\calC^{k,\alpha}_0(B_1(0)) \rightarrow \calC^{k-\kappa,\alpha}(B_1(0))$, between {\it ordinary} H\"older spaces
(and where elements of the domain space are compactly supported in $B_1(0)$).  Indeed, given any $u \in \calC^{k,\alpha}_{\vep}$, choose a locally
finite cover $B_{2\vep}(q_j)$ of bounded covering multiplicity and a partition of unity $\chi_j$ for which there are uniform bounds on its
semiclassical derivatives up to order $k$, and such that $\chi_j = 1$ on $B_{\vep}(q_j)$.  Then $u = \sum \chi_j u$ and since only at most some fixed number of the $A(\chi_j u)$ have support at any one point, it suffices to estimate the norms of each of these summands. 
Recalling Remark~\ref{scnormballs}, we may compute the semiclassical H\"older norms by taking the supremum over balls
of radius $\vep$ (or any fixed multiple of $\vep$).  Thus, rescaling the coordinates of each summand by a factor of $1/\vep$, we reduce to the action of a standard pseudodifferential operator of order $\kappa$ on a standard $\calC^{k,\alpha}$ function on a ball of fixed radius, 
where the result is well known.

Finally, if $A \in \Psi^{\kappa, \mu}_{\scl-\unif}(M,g)$, then it is clear from all of the above, and the basic definitions, that
the assertion of Proposition \ref{bddprop} holds.
\end{proof}

\medskip

\noindent{\bf Operators with finite regularity coefficients.}  
As explained at the beginning of this section, the geometric microlocal techniques used here require the smoothness 
of both the metric $g$ and the coefficients of $L$. Using that assumption, we have shown that the inverse 
$(\zeta I - \vep^m L)^{-1}$ exists and is bounded on $\calC^{k,\alpha}$ if $\vep$ is sufficiently small.  We now
extend this to operators and metrics of lower regularity. 

\begin{prop}
Suppose that $(M,g)$ has bounded geometry of order $\ell + \alpha'$ where $\ell \geq k + m$ and $\alpha' \in (\alpha, 1)$, 
and that $L$ is an admissible elliptic operator of order $m$ with coefficients uniformly bounded in $\calC^{k,\alpha'}$.  
Then there exists $\vep_0 > 0$ such that $G_{\zeta, \vep} := (\zeta I - \vep^m L)^{-1}$ exists as a bounded operator on 
$\calC^{k,\alpha}$ if $0 < \vep < \vep_0$.
\label{finitereg}
\end{prop}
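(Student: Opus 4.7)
The plan is to reduce to the smooth setting of Proposition~\ref{scparprop} by an approximation-and-perturbation argument. First I would construct smooth approximations $(g_j, L_j)$ by mollifying the components of $g$ and the coefficients of $L$ in a uniform cover of $M$ by normal coordinate balls of radius $r_0$, reassembling via a subordinate partition of unity. With some care one can arrange that each $(M, g_j)$ has bounded geometry of order $\ell + \alpha'$ with constants uniform in $j$, that each $L_j$ is smooth and admissible with uniform admissibility constants, and that $\|L - L_j\|_{\calC^{k,\alpha}} \to 0$ in the coefficient norm as $j \to \infty$ --- the last being available because mollification converges in $\calC^{k,\alpha''}$ for any $\alpha'' < \alpha'$.

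For each fixed $j$, Proposition~\ref{scparprop} then supplies some $\vep_0(j) > 0$ and a bounded inverse $G_{j,\vep} := (\zeta I - \vep^m L_j)^{-1}$ on $\calC^{k,\alpha}$ for every $\vep \in (0, \vep_0(j))$. The quantitative sectoriality estimate of Theorem~\ref{thm:main-A} depends on $L_j$ only through its admissibility data and the order of bounded geometry, so there will be a constant $C$ independent of $j$ and $\vep$ with
\[
\|G_{j,\vep}\|_{\calC^{k,\alpha} \to \calC^{k,\alpha}} \leq C.
\]
Combining this with the classical local Schauder estimate $\|u\|_{k+m,\alpha} \leq C(\|L_j u\|_{k,\alpha} + \|u\|_{k,\alpha})$ applied to $u = G_{j,\vep} f$, together with the identity $L_j u = \vep^{-m}(\zeta u - f)$, I would derive the auxiliary bound
\[
\|G_{j,\vep}\|_{\calC^{k,\alpha} \to \calC^{k+m,\alpha}} \leq C\vep^{-m},
\]
again uniform in $j$.

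Now I fix $j$ large enough that $C\|L - L_j\|_{\calC^{k,\alpha}} < \tfrac{1}{2}$ and write
\[
(\zeta I - \vep^m L)\, G_{j,\vep} = I + \vep^m(L_j - L)\, G_{j,\vep} =: I + E_{j,\vep}.
\]
Since $L_j - L$ is a differential operator of order $m$ whose norm $\calC^{k+m,\alpha} \to \calC^{k,\alpha}$ is controlled by its coefficient norm, the two bounds above combine to give
\[
\|E_{j,\vep}\|_{\calC^{k,\alpha} \to \calC^{k,\alpha}} \leq C\|L - L_j\|_{\calC^{k,\alpha}} < \tfrac{1}{2}
\]
uniformly in $\vep \in (0, \vep_0(j))$. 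A Neumann series inverts $I + E_{j,\vep}$ and delivers a right inverse $G_{j,\vep}(I+E_{j,\vep})^{-1}$ for $\zeta I - \vep^m L$ on $\calC^{k,\alpha}$; the symmetric identity from the other side yields a left inverse, and the two agree.

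The main obstacle is verifying that the constant $C$ in Theorem~\ref{thm:main-A} really does depend on $L$ only through its admissibility and bounded-geometry data, so that it remains uniform along the approximating family $L_j$. This amounts to inspecting the blowup/compactness argument in Proposition~\ref{prop3.3} together with the limiting-operator results of Lemma~\ref{adm-persists} and Proposition~\ref{limspecrel}, and checking that the mollification can be arranged so that the admissibility cone $\Gamma$ of the symbol is preserved, up to harmless shrinking, along the approximating sequence.
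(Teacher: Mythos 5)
Your overall architecture (mollify $g$ and the coefficients of $L$, invoke the smooth theory for the approximants $L_j$, then perturb back to $L$) is the same as the paper's, and the final Neumann-series step is a legitimate way to close the argument \emph{if} you have the bound you assert in the middle. But that middle step is a genuine gap: you claim that the constant in Theorem \ref{thm:main-A} ``depends on $L_j$ only through its admissibility data and the order of bounded geometry,'' so that $\|G_{j,\vep}\|_{\mathcal L(\calC^{k,\alpha})}\leq C$ uniformly in $j$ and $\vep$. Nothing in the statement or proof of Theorem \ref{thm:main-A} provides this: the constant there is produced by a contradiction/compactness argument (Proposition \ref{prop3.3} and its sequels) for a \emph{fixed} operator, and is in no stated sense a function of the admissibility constants. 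To make it uniform along the family $L_j$ you must rerun the blow-up argument with sequences $(j_i,\vep_i,\zeta_i,u_i)$ in which the operator varies. When $\vep_i\to 0$ this works as before (the limit is the constant-coefficient model $\zeta_*-L_E$, ruled out by strong ellipticity and Fourier analysis, and this part is insensitive to coefficient regularity). But when $\vep_i\to\vep_*>0$ the limiting equation is $(\zeta_*-\vep_*^m L)u=0$ or $(\zeta_*-\vep_*^m L_\infty)u_\infty=0$ with the \emph{finite-regularity} operator $L$ (or one of its pointed limits) --- exactly the operator whose invertibility the proposition is trying to establish. So the uniformity you defer as ``an obstacle to be checked by inspection'' is not a routine inspection of the cone $\Gamma$; it is the heart of the proof, and citing Theorem \ref{thm:main-A} for it is circular in spirit over part of the $\vep$-range.

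This is precisely why the paper proceeds differently: it proves, by compactness arguments applied directly to the approximating family, first that the intervals of invertibility $(0,\vep_0^{(i)})$ have a uniform lower bound, and then that for fixed $\vep,\zeta$ the norms $\|G^{(i)}_{\vep,\zeta}\|$ stay bounded as $i\to\infty$; the possible limiting nullspace of $\zeta I-\vep^m L$ (or $L_\infty$) is excluded only by the ``no nullspace for arbitrarily small $\vep$'' rescaling trick, which forces a possible further shrinking of the interval to some $\vep_1<\vep_0$ (see the remark following the proof: only the constant-coefficient limits $L_E$ can be checked outright, and they appear only as $\vep\to 0$). If you want to salvage your quantitative route, you must prove the uniform-in-$j$ resolvent bound yourself by such a family-wise compactness argument, restricting from the outset to $\vep$ small enough that every contradicting sequence can be funneled into the $L_E$ regime; at that point your proof essentially becomes the paper's, with the Neumann series replacing the paper's limiting argument at the very end.
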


While it is possible to carry out some version of the parametrix construction under these regularity hypotheses, this would
take extra work, particularly when the regularity order $k$ is small. Thus we prove this another way, using techniques
close to those in Section \ref{sec:proofThm}. 

\begin{proof}
Choose an approximating sequence of metrics $g^{(i)}$ and operators $L^{(i)}$, which are all smooth, but so that 
$g^{(i)} \to g$ in $\calC^{\ell, \alpha'}$ and the coefficients of $L^{(i)}$ converge to those of $L$ in $\calC^{k,\alpha'}$.
This can be done by selecting a locally finite open cover of $M$ by normal coordinate balls for $g$ and using a mollifier
in each such ball. 

Although the norms on the spaces $\calC^{j,\beta}$ depend on the metric, it is clear that we may define these
all relative to any fixed smooth metric. In fact, we assume that the metric $g$ is fixed (and smooth) for simplicity,
since the way it enters the argument below is minor. 

As has been shown earlier in this section, for each $i$, and for $0 < \vep < \vep_0^{(i)}$,  there exists a bounded inverse
\[
G^{(i)}_{\vep,\zeta} = (\zeta I - \vep^m L^{(i)})^{-1}: \calC^{k,\alpha} \longrightarrow \calC^{k,\alpha}.
\]
Denote the operator norm of this inverse by $A_{\vep,\zeta, i}$. There are two main points we must address. 
The first is that there exists $\vep_0$ where $\vep_0^{(i)} \geq \vep_0 > 0$ for all $i$, and the second is that the operator norms 
$A_{\vep,\zeta,i}$ are bounded for each $\vep$ and $\zeta$ as $i \to \infty$. 

Suppose first then that there exist sequences $\vep^{(i)} \to 0$ and $\zeta^{(i)}$ such that 
\[
P_i  := (\zeta^{(i)} I - (\vep^{(i)})^m L^{(i)})
\]
does not have a bounded inverse.  This failure occurs for one of three reasons: either $P_i$ has nullspace in $\calC^{k,\alpha}$, 
or its range is dense but not closed, or the closure of its range has positive codimension. All of this is just as in Proposition~\ref{limspecrel},
and the proofs to rule out each of these cases is very similar to the proof of that Proposition, as well as the arguments of Section \ref{sec:proofThm}.
For that reason, we shall be brief. 

In the first of these cases, there exists a sequence $u^{(i)} \in \calC^{k,\alpha}$ such that $||u^{(i)}||_{k,\alpha} = 1$
and $P_i u^{(i)} = 0$.  We may extract a limit $u$ of this sequence by rescaling around a point $q_i \in M$ where $|u^{(i)}(q_i)| \geq 1/2$.
Since $\vep^{(i)} \to 0$, this limiting function $u$ lies in $\calC^{k,\alpha}(\RR^n)$ and satisfies $(\zeta I - L_E) u = 0$, where $\zeta$
is a limit of (a subsequence of the) $\zeta^{(i)}$, and $L_E$ is the constant coefficient strongly elliptic operator arising in
this rescaling process. As showed in the proof of Proposition~\ref{prop3.3}, by the strong ellipticity of $L$ (and hence $L_E$), 
there are no nontrivial solutions of this equation. 

Next, if the range of $P_i$ is dense, there exist sequences $u^{(i)}, f^{(i)} \in \calC^{k,\alpha}$ such that $P_i u^{(i)} = f^{(i)}$, 
with $||u^{(i)}||_{k,\alpha} = 1$ and $||f^{(i)}||_{k,\alpha} \to 0$. Just as in the previous paragraph, there is a nontrivial limit
$u \in \calC^{k,\alpha}(\RR^n)$ such that $(\zeta I - L_E)u = 0$, which is impossible. 

Finally, if the closure of the range of $P_i$ is a proper subspace, then we may apply the same type of argument to the sequence
of distributions $v^{(i)}\in (\calC^{k,\alpha})^*$ which satisfy $P_i^* v^{(i)} = 0$. To do this, we must note that since $v^{(i)}$ 
satisfies this elliptic equation, it lies in $\calC^{k+m,\alpha}$. There is a limiting function $v$ which satisfies
$(\bar{\zeta} I - L_E^*)v = 0$, which again cannot happen.  This proves that the inverses $G_{\vep,\zeta}^{(i)}$ all exist for $\vep$ 
lying in some fixed interval $(0, \vep_0)$. 

Now fix any $\vep$ in this interval, and any $\zeta$, and suppose that the norms $A_i = ||G_{\vep,\zeta}^{(i)}||_{\mathcal L(\calC^{k,\alpha})}$ 
are unbounded as $i \to \infty$. This implies that there is a sequence $f_i \in \calC^{k,\alpha}$ such that 
\[
||G_{\vep, \zeta}^{(i)} f_i||_{k,\alpha} \geq \frac12 A_i ||f_i||_{k,\alpha}.
\]
Writing $u_i = G^{(i)}_{\vep,\zeta} f_i$, this is the same as
\[
||u_i||_{k,\alpha} \geq \frac12 A_i || (\zeta_i I - \vep^m L^{(i)}) u_i||_{k,\alpha}.
\]
Normalizing so that $||u_i||_{k,\alpha} = 1$ for all $i$, then $||f_i||_{k,\alpha} \leq 2/A_i \to 0$.  Passing to a limit as usual,
but recalling that $\vep$ is fixed, there exists a limiting function in $\calC^{k+m,\alpha}$,  defined either on $M$ or on one of its
limiting spaces $M_\infty$, such that $(\zeta I - \vep^m L)u = 0$ or $(\zeta I - \vep^m L_\infty)u_\infty = 0$. 

The proof is complete once we show that these last possibilities cannot occur.   Let us focus on the first, since the second
is essentially the same.  The point is simply that $\zeta I - \vep^m L$ cannot have nullspace for arbitrarily small
values of $\vep$. Indeed, if this operator were to have nullspace in $\calC^{k,\alpha}$ for some sequence $\vep_i\to 0$,
then the same rescaling argument as we have done several times already would yield a limiting function $u$ in
$\calC^{k,\alpha}$ on $\RR^n$ such that $(\zeta I - L_E)u = 0$, which we know is impossible. 
\end{proof}
\begin{remark}
It is perhaps worth emphasizing the flow of logic in this argument. We first show that for every one of the approximating 
operators $L^{(i)}$, the operator $\zeta I - \vep^m L^{(i)}$ is invertible for $\vep < \vep_0$ where $\vep_0$ does not depend
on $i$. However, it may be necessary to restrict to a slightly smaller interval $0 < \vep < \vep_1 < \vep_0$ in order to
guarantee that $\zeta I - \vep^m L$ does not have nullspace.  

The main point is that it is only the constant coefficient operators $\zeta I - L_E$ which we can check specifically do not have nullspace.
These operators only appear as limits when $\vep \to 0$, and the argument to rule out nullspace when $\vep$ is sufficiently small,
while it is not quantitative, is insensitive to the regularity of the coefficients of $L$. 
\end{remark}

\section{Applications}
\label{sec:applicats}
In this section we present an application of the results proven in this paper.  We begin by stating a general theorem which establishes short-time existence,
uniqueness and continuous dependence on initial conditions for a large class of geometric flows on manifolds with bounded geometry.
We then illustrate its application by obtaining a new result concerning the short-time existence and stability of the higher order `ambient 
obstruction flow' on open manifolds with bounded geometry.

\subsection{A general result}
For $k, m \in \bN$, $0 < \alpha < \alpha' < 1$, suppose that $(M,g)$ is a complete Riemannian manifold with bounded geometry of order $k + m + \alpha'$, and let $F$ be a smooth,
possibly nonlinear, elliptic partial differential operator of order $m$ acting on some open subset of the space of sections of some 
uniform vector bundle over $M$. Set $X = \calC^{k,\alpha}(M,g)$ and $D = \calC^{m+k,\alpha}(M,g) \subset X$. (As above, these are little 
H\"older spaces.)  Let $\mathcal U$ be an open subset of $D$ for which 
\[
F: \mathcal U \longrightarrow  X
\]
is a smooth mapping.  Now consider the Cauchy problem for  Banach-valued sections:
\begin{equation} 
\frac{du}{dt}  = F( u(t) ), \qquad u(0) = u_0 \in \mathcal U. 
\label{eqn:CPforu}
\end{equation}
Assume that the linearization $DF_u$ at any $u \in \mathcal U$ is admissible; hence by Theorem \ref{thm:main-A}, each such 
$DF_u$ is sectorial as an unbounded map from $X$ to itself. Following \cite{Lunardi}, we then obtain the wellposedness result of Theorem \ref{thm:main-B}, restated here for convenience:
\begin{theorem} \label{thm:STE}
Assuming the notation as well as the hypotheses above,
\begin{enumerate}
\item (\emph{Short-time existence, uniqueness})
There exists $T > 0$ such that the initial value problem \eqref{eqn:CPforu} has a unique smooth solution for $t \in [0,T)$. 
\item (\emph{Continuous dependence}) In addition, the estimate
\begin{equation}
\label{wplocest}
 \|v(t) - w(t)\|_{D} \leq C \| v_0 - w_0\|_D, \; \; \mbox{for all} \; t \in [0,\vep)
\end{equation}
is valid for any two solutions $v(t)$ and $w(t)$ with initial values $v_0, w_0 \in \mathcal U$.
\end{enumerate}
\end{theorem}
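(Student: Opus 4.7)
The plan is to apply the classical abstract theory of analytic semigroups for fully nonlinear parabolic equations, as developed in \cite{Lunardi}. The sectoriality result already established in Theorem \ref{thm:main-A} does essentially all the heavy geometric and microlocal lifting; what remains is standard functional analysis with careful bookkeeping of function spaces. I would begin by fixing $u_0 \in \mathcal{U}$ and setting $L_0 := DF_{u_0} : D \to X$, where $D = \calC^{m+k,\alpha}(M,g)$ and $X = \calC^{k,\alpha}(M,g)$. By hypothesis $L_0$ is admissible, so Theorem \ref{thm:main-A} gives that $L_0$ is sectorial on $X$. With the sign convention of this paper, this means $-L_0$ generates a bounded analytic semigroup $\{e^{-tL_0}\}_{t\geq 0}$ on $X$, and standard interpolation theory for sectorial operators produces smoothing estimates of the form $\|e^{-tL_0}\|_{\mathcal L(X, D)} \lesssim t^{-1}$ for small $t > 0$.

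Next, recast the PDE as a semilinear perturbation problem around $u_0$. Writing $v = u - u_0$ and defining the nonlinear remainder $R(v) := F(u_0 + v) - F(u_0) - L_0 v$, the smoothness of $F : \mathcal{U} \to X$ together with $DR(0) = 0$ implies that $R$ is locally Lipschitz from a ball in $D$ into $X$, with Lipschitz constant tending to zero as the radius shrinks. The Cauchy problem is then equivalent, via Duhamel's formula, to the integral equation
\[
v(t) = e^{-tL_0} v(0) + \int_0^t e^{-(t-s)L_0}\bigl[F(u_0) + R(v(s))\bigr]\, ds.
\]
I would solve this by a contraction mapping argument on a short interval $[0,T]$, working in a closed subset of $C([0,T]; D)$ of the form $\{v : v(0) = v_0,\ \sup_t \|v(t)\|_D \leq \rho\}$. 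The combination of the analytic smoothing $e^{-tL_0} : X \to D$ and the smallness of $R$ on small balls makes $\Phi(v)(t) := e^{-tL_0} v(0) + \int_0^t e^{-(t-s)L_0}[F(u_0) + R(v(s))]\, ds$ a strict contraction provided $T = T(\rho, u_0)$ is sufficiently small. The fixed point is the desired solution; uniqueness follows directly from the contraction property, and $\calC^1$-regularity in time together with local elliptic regularity in space promotes it to a classical solution. The continuous dependence estimate \eqref{wplocest} is then obtained by writing the difference $z = v - w$ of two solutions as
\[
z(t) = e^{-tL_0}(v_0 - w_0) + \int_0^t e^{-(t-s)L_0}\bigl[R(v(s)) - R(w(s))\bigr]\, ds,
\]
applying the Lipschitz bound on $R$, and invoking Gronwall's inequality.

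The principal obstacle is matching function spaces compatible with the analytic semigroup machinery. Because $F$ is merely Fr\'echet smooth between $D$ and $X$, the remainder $R(v(s))$ only lies in $X$, whereas one seeks $v(s) \in D$, and closing the fixed point estimate requires maximal regularity for $L_0$ in precisely the little H\"older setting. This is exactly why we have restricted to little (rather than full) H\"older spaces throughout: the continuous interpolation spaces between $X$ and $D$ associated to $L_0$ are then well behaved, and the abstract results of \cite{Lunardi} on fully nonlinear problems apply essentially verbatim. Once this technology is in place, the proof becomes a routine verification of the Lunardi-type hypotheses rather than a substantive new argument.
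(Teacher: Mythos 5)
Your overall route is the paper's route -- reduce everything to Lunardi's abstract theory for fully nonlinear parabolic problems once sectoriality is in hand -- but the explicit fixed-point sketch in the middle of your proposal has a genuine gap. The problem \eqref{eqn:CPforu} is fully nonlinear: the remainder $R(v)=F(u_0+v)-F(u_0)-L_0v$ is itself a differential operator of the full order $m$, so it maps $D$ to $X$ and loses all $m$ derivatives; it is not subordinate to the linear part as in the semilinear theory. Consequently the contraction in $C([0,T];D)$ cannot be closed with the smoothing bound $\|e^{-tL_0}\|_{\mathcal L(X,D)}\lesssim t^{-1}$: the Duhamel term is then only controlled by $\int_0^t (t-s)^{-1}\,ds$, which diverges, and the same non-integrable singularity defeats the Gronwall argument you propose for \eqref{wplocest}. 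There is also a sign slip: with the paper's convention that admissible operators have spectrum in a right half-plane, the mild formulation of $\dot v = L_0 v + \cdots$ would involve $e^{tL_0}$, not $e^{-tL_0}$; one should really set the problem up for $-L_0$ (equivalently, write the flow as $\partial_t u + Lu = \cdots$ with $L$ sectorial).

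What rescues you is precisely your closing paragraph, and it is exactly what the paper does: no hand-made Duhamel contraction is attempted. After observing that $D$ is dense in $X$ (because these are little H\"older spaces) and that sectoriality is an open condition (\cite[Proposition 2.4.2]{Lunardi}), so that Theorem \ref{thm:main-A} gives sectoriality of $DF_u$ on a neighborhood of $u_0$, the paper quotes Theorem 8.1.1 and Corollary 8.1.2 of \cite{Lunardi} for short-time existence and uniqueness, and Section 8.3 of \cite{Lunardi} for the continuous dependence estimate \eqref{wplocest}. Those results rest on Da Prato--Grisvard maximal regularity in continuous interpolation spaces, which is available exactly in the little H\"older setting and exploits the small local Lipschitz constant of $R$ that you identify; this is the correct replacement for the divergent Duhamel estimate. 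So your proposal should either drop the explicit contraction/Gronwall sketch in favor of the citation to Lunardi's Chapter 8, or redo the fixed point genuinely within that maximal regularity framework; as written, the middle portion does not close.
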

\begin{proof}
Observe that since we are using little H\"older spaces, $D$ is dense in $X$.  Note also that since sectoriality is an open condition,
see \cite[Proposition 2.4.2]{Lunardi}, it suffices to prove sectoriality of $L = DF_{u_0}$ at any one particular point $u_0 \in
\mathcal U$ in order to prove sectoriality at every $u'_0$ in some perhaps slightly smaller neighborhood $\mathcal U'$. 
Invoking Theorem \ref{thm:main-A}, we may now apply Theorem 8.1.1 and Corollary 8.1.2 in \cite{Lunardi} to obtain
existence and uniqueness; continuous dependence in $t$ is addressed in Section 8.3 in \cite{Lunardi}. 
\end{proof}

 \subsection{Ambient obstruction flows}
In their study of conformal invariants of a compact manifold endowed with a conformal structure, Fefferman and Graham introduce 
the ambient obstruction tensor, \cite{FG}.  If $n=2\ell$ is even, the ambient obstruction tensor $\mathcal{O}_n$ on a manifold $(M^n, g)$
is a conformally covariant, trace-free, divergence-free symmetric $2$-tensor associated to the metric $g$. Its expression involves $n-2$ derivatives 
of the Ricci tensor. In the particular case $n=4$, the obstruction tensor $\mathcal{O}_4$ coincides with the Bach tensor
\begin{align}
B_{ij} = {P_{ij,k}}^k - {P_{ik,j}}^k - P^{kl} \, W_{kijl},
\end{align}
where $P_{ij} = \frac{1}{2} \left( Rc_{ij} - \frac{S}{6} g_{ij} \right)$, and $W_{ijkl}$ and $S$ are the Schouten tensor, the Weyl tensor and the 
scalar curvature of $g$, respectively. We refer to \cite{FG}, where the importance of this tensor to conformal geometry is explained.

We now study a flow associated to this ambient obstruction tensor. If this flow exists and converges as $t \to \infty$, the limit must 
be ``obstruction-flat'', a condition describing a natural class of canonical metrics in higher dimensions. On compact manifolds, the
wellposedness and uniqueness of solutions to this flow is the topic of the two papers \cite{BahuaudHelliwell, BahuaudHelliwell2} by the first author here and Helliwell.  As an application of the methods of the present paper, we generalize these results to the setting 
of complete manifolds of bounded geometry.  We describe this briefly here and refer the reader to \cite{BahuaudHelliwell, BahuaudHelliwell2} for more detail. 

The obstruction flow itself, namely $\del_t g = \mathcal O_n(g)$, is degenerate both because of the underlying conformal
covariance as well as the usual diffeomorphism invariance. To counter the first of these, we introduce the modified obstruction flow
\begin{align} \label{eqn:AOF}
\begin{cases} \partial_t g &= \mathcal{O}_n(g) + c_n (-1)^{\frac{n}{2}} ( (-\Delta)^{\frac{n}{2}- 1} S ) g \\
g(0) &= g_0,
\end{cases}
\end{align}
where
\begin{align}
c_n = \frac{1}{2^{n/2 - 1} ( \frac{n}{2} - 2)! (n-2) (n-1)}.
\end{align}
In $4$ dimensions this is the modified Bach flow
\begin{align}
\begin{cases} \partial_t g &= B(g) - \frac{1}{12} (\Delta S ) g \\
g(0) &= g_0.
\end{cases}
\label{mBF}
\end{align}
This modification breaks the conformal gauge in the sense that stationary points of this modified flow are obstruction 
flat metrics with harmonic scalar curvature. The scalar curvature condition is the normalization within a conformal class.
The proof of this uses that $\mathcal{O}_n$ is trace-free. 

The invariance under diffeomorphisms can be handled using a version of DeTurck's method, which is as an effective
tool to handle this degeneracy for the Ricci flow (see Chapter 2, Section 6 of \cite{CLN}).   We now describe this method
in the present setting.  Fix a background metric $\gtil$; then any smooth one-parameter family of 
metrics $g(t)$ now defines a time-dependent vector field 
\begin{align}
\label{def:DeTvf}
V(t) = \sum V_k(t,z) \del_{z_k}, \quad \mbox{where}\qquad V^k(t,z) := g^{pq}(t) \left( \Gamma(g(t))^k_{pq} - \Gamma(\gtil)^k_{pq} \right)
\end{align}
using the Christoffel symbols $\Gamma$ of the indicated metrics.  From this we define the DeTurck vector field
\begin{align} \label{eqn:DT-vectorfield}
U = c_n (n-1) (-1)^{\frac{n}{2}-1} (-\Delta)^{\frac{n}{2}-1} V + \frac{c_n ( n-2) (-1)^\frac{n}{2} }{2} (-\Delta)^{\frac{n}{2}-2} \, \nabla S.
\end{align}
and finally the obstruction-DeTurck flow 
\begin{align} \label{eqn:ODT}
\begin{cases} \partial_t g &= \mathcal{O}_n(g) + c_n (-1)^{\frac{n}{2}} ((-\Delta)^{\frac{n}{2}- 1} S ) g + L_U g \\
g(0) &= g_0.
\end{cases}
\end{align}

As usual, one must show that solutions of this gauged flow lead to solutions of the original (modified) flow \eqref{eqn:AOF}. 
To this end, given a solution $g(t)$ to \eqref{eqn:ODT}, solve the ODE
\begin{align} \label{eqn:DT-trick}
\begin{cases}
\frac{d}{dt} \phi_t&=-U \circ \phi_t\\
\phi_0&=\mathrm{id},
\end{cases}
\end{align}
and let $\phi_t$ be the one-parameter family of diffeomorphisms generated by $-U$.   The fact that $\gtil$ and $g(t)$ have bounded geometry 
implies that $\phi_t$ exists at least for $t$ in some small interval around $0$.  A short calculation then shows that $\bar{g}(t) = \phi^*_t g(t)$ 
solves \eqref{eqn:AOF}.

Uniqueness of solutions to the gauged flow \eqref{eqn:ODT} follows directly from the semigroup method that we invoke below.
Uniqueness of solutions to the ungauged flow \eqref{eqn:AOF} requires more work. This is explained carefully in \cite{BahuaudHelliwell2},
but the main ideas are as follows.  Given a particular solution $\overline{g}(t)$ to \eqref{eqn:AOF} and a choice of reference metric $(M,\gtil)$, one may again use semigroup techniques to solve a higher order analogue of the harmonic map heat flow equation for a family of diffeomorphisms $\phi_t$ from $(M,\overline{g}(t))$ to $(M,\gtil)$.  This equation is chosen exactly so that pullback $g(t) = (\phi_t^{-1})^* \gbar(t)$ solves \eqref{eqn:ODT} with reference metric $\gtil$ and $U$ and $V$ as above.  The various uniqueness statements then imply that $\overline{g}$ is uniquely determined.  

We now expand on both the existence and uniqueness statements.  Taking the reference metric $\gtil$ equal to the initial metric, i.e.,\ $\gtil = g_0$, we define
\[ 
F( g ) := \mathcal{O}_n(g) + c_n (-1)^{\frac{n}{2}} ( (-\Delta)^{\frac{n}{2}- 1} S ) g + L_U g. 
\]
As proved in \cite{BahuaudHelliwell}, 
\begin{align}
DF_{g_0} = \left. \frac{d}{ds} F( g_0 + s h) \right|_{s=0} &= (-1)^{\frac{n}{2}-1} A_{g_0} h + \mathcal{P}( \partial^{n-1} h, \partial^n g_0, g_0^{-1},\partial^n \gtil, \gtil^{-1} ),
\end{align}
where the leading term 
\begin{align}
(A_{g_0} h)_{jk} := g_0^{r_1 s_1} g_0^{r_2 s_2} \cdots g_0^{r_{n/2} s_{n/2}} \partial_{r_1} \partial_{s_1} \ldots \partial_{r_{n/2}} \partial_{s_{n/2}} h_{jk}
\end{align}
is an operator of order $n$ and $\mathcal{P}$ is a polynomial expression in the input tensors and their derivatives of appropriate order. Note that 
$A_{g_0}$ is the leading term in $\Delta^{n/2}$ and is strongly elliptic.  We are now ready to prove Theorem \ref{thm:main-C}, restated here for convenience.

\begin{theorem} 
Let $(M^n,g)$ be a complete Riemannian manifold of bounded geometry of order $2n + \alpha'$, with even dimension 
$n = 2\ell$, and where $0 < \alpha < \alpha' < 1$. If $g_0$ is any smooth metric on $M$, then there exists $T > 0$ and a family of unique metrics $g: [0,T) \to \calC^{n,\alpha}(M,g)$ solving the ambient obstruction flow 
\begin{align} 
\begin{cases} \partial_t g &= \mathcal{O}_n(g) + c_n (-1)^{\frac{n}{2}} ( (-\Delta)^{\frac{n}{2}- 1} S ) g \\
g(0) &= g_0.
\end{cases}
\end{align}
\end{theorem}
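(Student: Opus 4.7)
The plan is to reduce the theorem to an application of Theorem \ref{thm:main-B} via DeTurck's method. The flow \eqref{eqn:AOF} is degenerate because of diffeomorphism invariance, so I cannot apply Theorem \ref{thm:main-B} directly; instead I first treat the obstruction-DeTurck flow \eqref{eqn:ODT} with background metric $\gtil = g_0$, and then transfer the solution back by undoing the gauge via the one-parameter family of diffeomorphisms generated by $-U$.

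\textbf{Setting up the gauged operator.} Fix an open neighborhood $\mathcal{U} \subset \calC^{n,\alpha}(M,g)$ consisting of metrics $\tilde g$ uniformly equivalent to $g_0$, and define $F:\mathcal{U} \to \calC^{0,\alpha}(M,g)$ by
\[
F(\tilde g) := \mathcal{O}_n(\tilde g) + c_n (-1)^{n/2}\bigl((-\Delta)^{n/2-1} S(\tilde g)\bigr)\tilde g + L_U \tilde g,
\]
with $U$ given by \eqref{eqn:DT-vectorfield}. Since $\mathcal{O}_n$ depends polynomially on the metric, its inverse, and curvature derivatives up to order $n-2$, $F$ is a smooth, quasilinear $n$-th order differential operator. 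By the computation in \cite{BahuaudHelliwell},
\[
DF_{g_0} h = (-1)^{n/2-1} A_{g_0} h + \mathcal{P}\bigl(\partial^{n-1} h,\partial^n g_0, g_0^{-1},\partial^n \gtil,\gtil^{-1}\bigr),
\]
with leading piece $A_{g_0}$ of principal symbol $|\xi|^n \mathrm{Id}$ on the bundle of symmetric $2$-tensors. The bounded geometry hypothesis of order $2n + \alpha'$, together with the smoothness of $g_0=\gtil$, guarantees that all coefficient tensors of $DF_{g_0}$ are uniformly bounded in $\calC^{\ell,\alpha'}$ in $g$-normal coordinate charts; the principal symbol is a nonzero scalar multiple of the identity, so the spectral condition of Definition \ref{admop} is immediate. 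Hence $DF_{g_0}$ is admissible, and Theorem \ref{thm:main-A} yields sectoriality on $\calC^{0,\alpha}$.

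\textbf{Invoking Theorem \ref{thm:main-B} and undoing the gauge.} Since sectoriality is an open condition (\cite[Proposition~2.4.2]{Lunardi}), $DF_g$ remains admissible for $g$ in some possibly smaller neighborhood $\mathcal{U}' \subset \mathcal{U}$ of $g_0$. With $m=n$ and $k=0$, Theorem \ref{thm:main-B} furnishes a unique solution $g \in \calC^1([0,T);\calC^{n,\alpha})$ of the gauged Cauchy problem $\partial_t g = F(g)$, $g(0) = g_0$. Given this $g(t)$, I would then solve \eqref{eqn:DT-trick} for the family of diffeomorphisms $\phi_t$ generated by $-U(t)$; because both $\gtil$ and $g(t)$ have bounded geometry on $[0,T)$, $U(t)$ is a uniformly Lipschitz vector field on a short time interval, so $\phi_t$ exists as a diffeomorphism of $M$ on some subinterval $[0,T') \subset [0,T)$. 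A direct calculation then shows that $\gbar(t) := \phi_t^* g(t)$ solves \eqref{eqn:AOF}. Uniqueness among $\calC^{n,\alpha}$-valued solutions of the ungauged flow follows, as in \cite{BahuaudHelliwell2}, by solving a higher-order harmonic-map-type heat flow --- itself a parabolic problem of the kind to which Theorem \ref{thm:main-B} applies --- to associate to any other candidate solution a solution of the gauged flow, and then invoking the uniqueness just established.

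\textbf{Main obstacle.} The conceptually clean step is the appeal to Theorem \ref{thm:main-B}. The technical heart of the proof is the coefficient bookkeeping needed to verify admissibility of $DF_{g_0}$: the polynomial $\mathcal{P}$ depends on up to $n$ derivatives of $g_0$ and of $\gtil = g_0$, and these must sit uniformly in the appropriate $\calC^{\ell,\alpha'}$ spaces. The bounded-geometry order $2n + \alpha'$ is tuned precisely to provide the needed regularity, with the gap $\alpha' > \alpha$ leaving the room required by the Arzel\`a--Ascoli and blowup arguments of Section \ref{sec:proofThm}. A secondary, more routine, obstacle is checking in the gauge-undoing step that the short-time integration of $-U$ preserves bounded geometry long enough for $\phi_t^* g(t)$ to be well defined and to satisfy \eqref{eqn:AOF}.
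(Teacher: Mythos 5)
Your proposal follows the paper's own argument essentially verbatim: DeTurck gauging with $\gtil = g_0$, admissibility of $DF_{g_0}$ via the linearization computed in \cite{BahuaudHelliwell}, short-time existence and uniqueness for the gauged flow \eqref{eqn:ODT} from Theorem \ref{thm:main-B}, undoing the gauge by integrating $-U$ as in \eqref{eqn:DT-trick}, and uniqueness of the ungauged flow \eqref{eqn:AOF} via the higher-order harmonic-map-type heat flow of \cite{BahuaudHelliwell2}. The one discrepancy is your choice of function spaces: with $F:\mathcal U \subset \calC^{n,\alpha}\to\calC^{0,\alpha}$, i.e.\ $m=n$, $k=0$, Theorem \ref{thm:main-B} yields a solution curve in $\calC^{0,\alpha}$ (and would only require bounded geometry of order $n+\alpha'$), so the stated conclusion $g:[0,T)\to\calC^{n,\alpha}$ does not follow as directly as you claim; the paper instead takes $X=\calC^{n,\alpha}$ and $D=\calC^{2n,\alpha}$ (i.e.\ $k=n$), which is precisely what the hypothesis of bounded geometry of order $2n+\alpha'$ is there to allow, and which gives the $\calC^{n,\alpha}$-valued solution immediately --- an easy fix to your setup.
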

\begin{proof}
Set $D = \calC^{2n,\alpha} \subset X = \calC^{n,\alpha}$ and observe that $F: D \longrightarrow X$ is smooth. Since $DF_g$ is constructed in terms 
of the metric tensor, the uniform geometry implies that $DF_g$ is admissible. Hence by Theorem \ref{thm:STE}, there is a short-time solution to the 
obstruction-DeTurck flow.

As explained above, the equation \eqref{eqn:DT-trick} can then be solved to obtain the family of diffeomorphisms $\phi_t$,
and we then deduce that $\bar{g}(t) = \phi_t^* g(t)$ is a short-time solution to the obstruction flow with initial condition $g_0$.

To argue uniqueness, suppose that $\overline{g}_i(t)$, $i=1,2$ are two solutions to \eqref{eqn:AOF} with the same initial condition $g_0$.  Again choose reference metric $\gtil = g_0$.  Following Section 5.2 of \cite{BahuaudHelliwell2}, for each $i$ we set
\[
E(\phi_i) := (-1)^{n/2} c \Delta_{\overline{g}_i, g}^{n/2}\phi_i + \mathcal P(\phi_i),
\]
where $\Delta_{\overline{g}_i, g}$ is the Laplacian associated to the `map covariant derivative' for the identity map $(M, \overline{g}_i(t)) \to (M, \gtil)$,
as described in \cite{BahuaudHelliwell2},  and where $\mathcal P$ is a nonlinear differential operator of order $n-1$ acting on $\phi$. 
Combining this with the ODE for $\phi$ itself, we arrive at the strictly parabolic equation
\begin{equation}
\del_t \phi_i = E(\phi_i), \; (\phi_i)(0) = \mathrm{id}.
\label{deq}
\end{equation}

Taking advantage of the explicit coordinate expression for $E$ in \cite{BahuaudHelliwell2}, and using the bounded geometry
of $M$ with respect to either of the metrics $g(t)$ or $\overline{g}(t)$ (valid in some fixed time interval), we see that $DE$ 
is an admissible elliptic operator, and hence Theorem~\ref{thm:STE} may be applied to \eqref{deq} to conclude that this equation has a unique solution on some short time interval that remains a diffeomorphism.  The remainder of the argument finishes exactly as in Section 5.3 of \cite{BahuaudHelliwell2}.
\end{proof}


\end{document}